\documentclass[]{amsart}
\usepackage{amscd,amsthm,amssymb,amsfonts,amsmath,euscript}

\usepackage{comment}

\usepackage{tikz-cd}
\usepackage{tikz}
\usetikzlibrary{matrix,arrows}
\tikzset{cong/.style={draw=none,edge node={node [sloped, allow upside down, auto=false]{$\cong$}}},
         Isom/.style={above,every to/.append style={edge node={node [sloped, allow upside down, auto=false]{$\sim$}}}}}

\theoremstyle{plain}
\newtheorem{thm}{Theorem}
\newtheorem{lemma}[thm]{Lemma}
\newtheorem{prop}[thm]{Proposition}
\newtheorem{cor}[thm]{Corollary}

\theoremstyle{definition}
\newtheorem{defn}[thm]{Definition}
\newtheorem{eg}[thm]{Example}

\theoremstyle{remark}
\newtheorem{remark}[thm]{Remark}

\newcommand{\nc}{\newcommand}

\def\makeop#1{\expandafter\def\csname#1\endcsname
  {\mathop{\rm #1}\nolimits}\ignorespaces}
\makeop{Hom}   \makeop{End}   \makeop{Aut}   \makeop{Isom}  \makeop{Pic} 
\makeop{Gal}   \makeop{ord}   \makeop{Char}  \makeop{Div}   \makeop{Lie} 
\makeop{PGL}   \makeop{Corr}  \makeop{PSL}   \makeop{sgn}   \makeop{Spf}
\makeop{Spec}  \makeop{Tr}    \makeop{Nr}    \makeop{Fr}    \makeop{disc}
\makeop{Proj}  \makeop{supp}  \makeop{ker}   \makeop{im}    \makeop{dom}
\makeop{coker} \makeop{Stab}  \makeop{SO}    \makeop{SL}    \makeop{SL}
\makeop{Cl}    \makeop{cond}  \makeop{Br}    \makeop{inv}   \makeop{rank}
\makeop{id}    \makeop{Fil}   \makeop{Frac}  \makeop{GL}    \makeop{SU}
\makeop{Nrd}   \makeop{Sp}    \makeop{Tr}    \makeop{Trd}   \makeop{diag}
\makeop{Res}   \makeop{ind}   \makeop{depth} \makeop{Tr}    \makeop{st}
\makeop{Ad}    \makeop{Int}   \makeop{tr}    \makeop{Sym}   \makeop{can}
\makeop{length}\makeop{SO}    \makeop{torsion} \makeop{GSp} \makeop{Ker}
\makeop{Adm}   \makeop{Mat}
\def\makebb#1{\expandafter\def
  \csname bb#1\endcsname{{\mathbb{#1}}}\ignorespaces}
\def\makebf#1{\expandafter\def\csname bf#1\endcsname{{\bf
      #1}}\ignorespaces} 
\def\makegr#1{\expandafter\def
  \csname gr#1\endcsname{{\mathfrak{#1}}}\ignorespaces}
\def\makescr#1{\expandafter\def
  \csname scr#1\endcsname{{\EuScript{#1}}}\ignorespaces}
\def\makecal#1{\expandafter\def\csname cal#1\endcsname{{\mathcal
      #1}}\ignorespaces} 

\def\doLetters#1{#1A #1B #1C #1D #1E #1F #1G #1H #1I #1J #1K #1L #1M
                 #1N #1O #1P #1Q #1R #1S #1T #1U #1V #1W #1X #1Y #1Z}
\def\doletters#1{#1a #1b #1c #1d #1e #1f #1g #1h #1i #1j #1k #1l #1m
                 #1n #1o #1p #1q #1r #1s #1t #1u #1v #1w #1x #1y #1z}
\doLetters\makebb   \doLetters\makecal  \doLetters\makebf
\doLetters\makescr 
\doletters\makebf   \doLetters\makegr   \doletters\makegr

     \def\qed{\qedmark\medbreak}%
\def\qedmark{{\enspace\vrule height 6pt width 5pt depth 1.5pt}}%
    \def\setminus{\smallsetminus}
\def\Gm{{{\bbG}_{\rm m}}}   \def\Ga{{{\bbG}_{\rm a}}}

\normalsize

\makeop{Bl}

\def\Spec{{\rm Spec}\,}

\def\Fpbar{\overline{\bbF}_p}
\def\Fp{{\bbF}_p}
\def\Fq{{\bbF}_q}

\def\Qp{{\bbQ}_p}

\def\Zp{{\bbZ}_p}

\newcommand{\Z}{\mathbb Z}
\newcommand{\Q}{\mathbb Q}
\newcommand{\R}{\mathbb R}

\newcommand{\A}{\mathbb A}    
\newcommand{\F}{\mathbb F}

\newcommand{\npr}{\noindent }

\newcommand{\<}{\langle}   
\renewcommand{\>}{\rangle} 

\newcommand{\isoto}{\stackrel{\sim}{\longrightarrow}}
\nc{\embed}{\hookrightarrow}

\newcommand{\ac}{algebraically closed }
\newcommand{\dieu}{Dieudonn\'{e} }

\nc{\ol}{\overline}
\nc{\wt}{\widetilde}
\nc{\opp}{\mathrm{opp}}
\def\ul{\underline}
\def\onto{\twoheadrightarrow}
\def\der{{\rm der}}
\def\wh{\widehat}

\makeop{Ram}
\makeop{Rep}

\begin{document}
\renewcommand{\thefootnote}{\fnsymbol{footnote}}
\setcounter{footnote}{-1}
\numberwithin{equation}{section}


\title[Dieudonn\'e modules and supersingular abelian varieties]{Introduction to Dieudonn\'e modules and supersingular abelian varieties revisited}
\author{Chia-Fu Yu}
\address{
Institute of Mathematics, Academia Sinica \\
Astronomy Mathematics Building \\
No.~1, Roosevelt Rd. Sec.~4 \\ 
Taipei, Taiwan, 10617} 
\email{chiafu@math.sinica.edu.tw}



\date{\today. }
\subjclass[2010]{} 
\keywords{}


\begin{abstract}
In this Note we present an expository account for \dieu modules and
revisit supersingular abelian varieties. We give a simple proof of
the uniqueness of products of two or more supersingular elliptic
curves (a theorem due to Deligne, Ogus and Shioda), and of Oort's theorem for superspecial abelian varieties. 
\end{abstract} 

\maketitle

 
\section{Introduction}
The purpose of this Note is two-fold. One is to introduce basic theory for \dieu modules. The other is to explain their arithmetic applications to supersingular abelian varieties in more details. 
Though it contains essentially no new results (except Lemma~\ref{lm:15} and its applications); some proofs given are different from the original one and elementary. 
This Note may serve as a supplement to the article by Harashita~\cite{harashita:2024RIMS} in this volume and Karemaker's lecture notes for the 2024 Arizona Winter School~\cite{karemaker:2024AWS}. 

Let $p$ be a prime number. 
In this Note, all ground fields are of characteristic 
$p$, unless specified otherwise. 

Recall that an elliptic curve $E$ over a field $K$ is called \emph{supersingular} if $E[p](\ol K)=0$. Let $\Lambda$ be the set of isomorphism classes of supersingular elliptic curves over $\Fpbar$. 
Let $E\in \Lambda$, $B:=\End^0(E)$ be the endomorphism algebra of $E$, which is a quaternion algebra over $\Q$ ramified precisely at $p$ and $\infty$, and $O_B:=\End(E)$ be the endomorphism ring of $E$. 
The  Deuring--Eichler correspondence states that there is a bijection 
\begin{equation}
    \label{eq:1.1}
    \Lambda\simeq \Cl (O_B),
\end{equation}
where $\Cl (O_B)$ denotes the set of isomorphism classes of right $O_B$-ideal classes, and one has the adelic interpretation:
\[ \Cl (O_B)\simeq B^\times \backslash (B\otimes \A_f)^\times/ \wh O_B^\times. \]
Here $\A_f$ is the finite adele ring of $\Q$. The well-known class number formula then shows
\begin{equation}
    \label{eq:1.2}
   |\Lambda|=\frac{p-1}{2}+\frac{1}{3} \left(1-\left (\frac{-3}{p} \right ) \right)+\frac{1}{3} \left(1-\left (\frac{-4}{p} \right ) \right), 
\end{equation}
where $(\cdot/p)$ denotes the Legendre symbol. 

We shall explore a higher dimensional generalization of \eqref{eq:1.1} and \eqref{eq:1.2}, and \dieu theory comes into play.  

\section{\dieu modules}

\subsection{} 
Throughout this paper, $k$ denotes a perfect field of characteristic $p>0$. Let $W:=W(k)$ denote the ring of Witt vectors over
$k$, and $L=W[1/p]$ the fraction field of $W$. 
Let $\sigma$ be the Frobenius map on $W$ and
$L$ induced by the endomorphism $\sigma:k \to k$, $a\mapsto a^p$ for $a\in k$. We shall assume $k$ to be algebraically closed in Lemma~\ref{lm:elliptic}, Theorem~\ref{thm:5} and Subsection~\ref{sec:2.4}.

If $H$ is a commutative group or a sheaf of commutative groups on a scheme $S$, and $m$ is an integer, denote by $H[m]$ the kernel of the multiplication map $m: H\to H$.

\begin{defn}\label{def:DM}

{\rm (1)} A \emph{\dieu module} over $k$ is a finite $W$-module $M$ together with a $\sigma$-linear map $F$ (Frobenius) and a
$\sigma^{-1}$-linear map $V$ (Verschiebung) on $M$ such that $F V=VF=p$.

{\rm (2)} (cf.~\cite[p.~45--46]{demazure}) A \emph{$p$-divisible group} over $k$ is a direct limit $G=\varinjlim G_n$ of commutative finite locally free group scheme $G_n$ over $k$ such that 
\begin{itemize}
    \item [(i)] $G_n \isoto G_m[p^n]$ for all integers $m\ge n\ge 0$; 
    \item[(ii)] For any $n\ge 0$, the multiplication map $p: G_{n+1} \to G_n$ is faithfully flat (i.e.~flat and surjective). 
\end{itemize}  
\end{defn}

Let $A_k=W(k)[F,V]$ denote the non-commutative ring modulo the relations:
\[ FV=VF=p, \quad F a = a^\sigma F, \quad a V=V a^{\sigma}, \quad  a\in W. \]
Then a \dieu module $M$ is a finitely generated $A_k$-module that is a finite $W$-module. If $G=\varinjlim G_n$ is a $p$-divisible group, then there exists an integer $h\ge 0$, called the \emph{height} of $G$ and denoted by $h(G)$, such that $\rank G_n=p^{nh}$ for all $n\ge 0$. \\

\begin{eg}\label{eg:2} \ 

\begin{enumerate}
    \item $M_1=\<e_1,e_2\>$ is a free $W$-module of rank two with relations 
\[ F(e_1)=e_1, \quad V(e_1)=pe_1, \quad F(e_2)=p e_2, \quad V(e_2)=e_2. \]
 \item $M_2=\<e_1,e_2\>$ is a free $W$-module of rank two with relations 
\[ F(e_1)=V(e_1)=e_2, \quad F(e_2)=V(e_2)=pe_1. \]
 \item If $X$ is an abelian variety, then $X[p^\infty]:=\varinjlim X[p^n]$ is a $p$-divisible group of height $2 \dim X$.\\  
\end{enumerate}
\end{eg}

Classical \dieu theory is stated as follows.

\begin{thm}[{cf.~\cite[p.~69--70]{demazure}}]\label{thm:dieu}
    There is an
anti-equivalence of categories between the category of $p$-divisible
groups over $k$ and the category of $W$-free \dieu modules over $k$. 
\end{thm}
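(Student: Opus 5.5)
The plan is to deduce this from the Dieudonn\'e theory of finite commutative group schemes of $p$-power order over $k$, which I take as the input. That theory gives an exact anti-equivalence $G\mapsto M(G)$ from finite commutative $p$-group schemes over $k$ to $A_k$-modules of finite $W$-length. Here $\operatorname{length}_W M(G)=\log_p \rank G$, Frobenius and Verschiebung of $G$ correspond to $V$ and $F$ on $M(G)$, and multiplication by $p$ corresponds to multiplication by $p$. Concretely, I would construct $M(G)$ as $\varinjlim_n \Hom(G,W_n)$, the homomorphisms to the Witt group schemes, with the limit over $V:W_n\to W_{n+1}$. Equivalently one may use Fontaine's $\Hom(G,CW)$ into the Witt covectors. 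The proof of this input goes in three stages: decompose $G$ by Cartier duality into \'etale/multiplicative/local-local parts; for \'etale and multiplicative parts, reduce to Galois descent from $\bar k$ and compute directly; treat the unipotent part by d\'evissage via Frobenius and Verschiebung to $\alpha_p$, using $\Hom(\alpha_p,W_n)$ and the exactness of $\Hom(-,W_n)$ on unipotent groups.

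Given this, for a $p$-divisible group $G=\varinjlim G_n$ I define $M(G):=\varprojlim_n M(G_n)$. The transition maps come from the inclusions $G_n\embed G_{n+1}$. Exactness of the finite theory, applied to the exact sequence $0\to G_n\to G_{n+m}\xrightarrow{p^n} G_m\to 0$, gives $M(G_{n+m})/p^n\cong M(G_n)$. Here condition (ii) of Definition~\ref{def:DM} ensures that $p^n$ is an epimorphism onto $G_m$, and condition (i) identifies the kernel. Each $M(G_n)$ has length $nh$ with $h=h(G)$, and $M(G_n)$ is killed by $p^n$. Hence $M(G_n)$ is free over $W_n(k)$ of rank $h$: it is killed by $p^n$, has length $nh$, and $M(G_n)/p\cong M(G_1)$ has length $h$. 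Nakayama then shows $M(G)$ is a free $W$-module of rank $h$, and $F$ and $V$ pass to the limit with $FV=VF=p$. Functoriality is clear.

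Conversely, for a $W$-free Dieudonn\'e module $M$ of rank $h$, I set $G_n:=$ the finite group scheme corresponding to $M/p^nM$. The surjections $M/p^{n+1}\onto M/p^n$ give closed immersions $G_n\embed G_{n+1}$. Freeness gives the exact sequences $0\to M/p^m\xrightarrow{p^n} M/p^{n+m}\to M/p^n\to 0$, which translate into conditions (i) and (ii). Thus $\varinjlim G_n$ is a $p$-divisible group. The two constructions are quasi-inverse by the finite anti-equivalence, applied levelwise. Full faithfulness follows because $\Hom(G,H)=\varprojlim\Hom(G_n,H_n)$ and $\Hom_{A_k}(M(H),M(G))=\varprojlim \Hom(M(H)/p^n,M(G)/p^n)$.

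The main obstacle is the finite-level theory itself, specifically the local-local part. There one must show that $\Hom(-,W_n)$ (or the covector functor) is exact and fully faithful. I would attack this by d\'evissage to $\alpha_p$, checking $\Ext^1$ computations against $\alpha_p$ and $\mu_p$, $\Z/p$, as in Demazure's treatment. The passage to $p$-divisible groups described above is then formal bookkeeping.
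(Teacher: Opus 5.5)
Your proposal is correct and follows the route of the source the paper relies on: the paper gives no argument of its own beyond citing Demazure, who likewise deduces the theorem from the anti-equivalence for finite commutative $p$-group schemes by setting $M(G)=\varprojlim_n M(G_n)$, using $0\to G_n\to G_{n+m}\xrightarrow{p^n} G_m\to 0$ to get $W$-freeness of rank $h(G)$, and going back via the finite group schemes attached to $M/p^nM$. There are two harmless slips in your description of the finite-level input: for the contravariant functor $\varinjlim_n\Hom(G,W_n)$, which matches the paper's convention, the relative Frobenius of $G$ induces $F$, not $V$, on $M(G)$; and only the direct limit over $n$, not each $\Hom(-,W_n)$ separately, is exact on finite unipotent groups.
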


For simplicity, \dieu modules will refer as 
$W$-free \dieu modules. The \dieu module associated to a $p$-divisible group $G$ is denoted by $M(G)$. 
The contravariant \dieu functor $G\mapsto M(G)$ is exact and commutes with any perfect field base change and the dual. The dual of $M$ is defined by 
\[ M^t:=\Hom_{W}(M,W), \quad F(f)(x):=f(V (x))^\sigma, \quad V(f)(x):=f(F(x))^{\sigma^{-1}}\]
for any $f\in M^t$ and $x\in M$. Then there is a natural isomorphism between $M(G^t)$ and the dual $M^t$ of $M=M(G)$, where $G^t$ is the Serre dual of $G$ \cite[Sect.~1.4]{li-oort}.
Moreover, we have $h(G)=\rank_W M$ and there is a natural isomorphism $\Lie(G)\simeq \Hom_k(M/FM, k)$.

The \dieu module $M(E):=M(E[p^\infty])$ associated to an elliptic curve $E$ over $k$ is a \dieu module $M$ of rank $2$ satisfying $\dim_k M/F M=\dim_k M/V M=1$.

\begin{lemma}\label{lm:elliptic}
    Assume that $k$ is algebraically closed. Then $M=M(E)$ is either isomorphic to  $M_1$ or isomorphic to $M_2$ in Example~\ref{eg:2}. Moreover, $E$ is ordinary (resp.~supersingular) if $M\simeq M_1$ (resp.~$M\simeq M_2$). 
\end{lemma}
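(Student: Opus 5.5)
We split according to whether $F$ is bijective on $M/pM$. Set $\bar M:=M/pM$, a $2$-dimensional $k$-vector space with induced maps $\bar F,\bar V$ satisfying $\bar F\bar V=\bar V\bar F=0$. Since $\dim_k M/FM=\dim_k M/VM=1$, the images $\bar F\bar M=FM/pM$ and $\bar V\bar M=VM/pM$ are lines. Also $\ker\bar F=\bar V\bar M$ and $\ker \bar V=\bar F\bar M$: the kernel of $\bar F$ is $\{x: Fx\in pM\}/pM=\{x: x\in VM\}/pM$, using $Fx=pm=FVm$ and injectivity of $F$ on the free module $M$. So there are two cases: either $\bar F\bar M\neq \bar V\bar M$, or $\bar F\bar M=\bar V\bar M$.

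\emph{Case $\bar F\bar M\ne\bar V\bar M$.} Then $\bar M=\bar F\bar M\oplus\bar V\bar M$. Here $\bar F$ is bijective on the line $\bar F\bar M$, and $\bar V$ is bijective on $\bar V\bar M$. The standard argument (Dieudonné--Manin for slopes, or directly by Hensel-type successive approximation) splits $M=M^{0}\oplus M^{1}$, where $M^0$ is the largest $A_k$-submodule on which $F$ is bijective and $M^1$ is the one on which $V$ is bijective. Concretely, $M^0=\bigcap_n F^nM$, which has rank one because $F^n M$ stabilizes modulo $p$. Each is a rank-one $W$-module with $F$ (resp.\ $V$) a $\sigma^{\pm1}$-linear bijection. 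To normalize a generator $e$ with $Fe=ue$, $u\in W^\times$, we seek $c\in W^\times$ with $c^\sigma u=c$, i.e.\ $c^{\sigma-1}=u^{-1}$. This is solvable since $k$ is algebraically closed: solve $x^{p-1}=\bar u^{-1}$ in $k$, then lift step by step, each step requiring a solution of an Artin--Schreier equation $y^p-y=a$, again solvable in $k$. This gives $M\simeq M_1$. Moreover $M/VM=M^0/pM^0$ corresponds to the étale part, so $E[p](\bar k)\ne0$ and $E$ is ordinary.

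\emph{Case $\bar F\bar M=\bar V\bar M$.} Then $F^2M\subset pM$ and $V^2M\subset pM$, because $F^2M\subseteq F(VM+pM)\subseteq pM$. Since lengths give $[M:F^2M]=2=[M:pM]$, we get $F^2M=pM$, and similarly $V^2M=pM$. Hence $F^{-1}V$ is a $\sigma^{-2}$-linear automorphism $\phi$ of $M$ (note $V=F^{-1}p$, so $\phi=F^{-2}p$). We pick $e_1\in M\setminus FM$ with $\phi(e_1)=e_1$. The fixed points $M^{\phi}$ form a $W(\F_{p^2})$-lattice by Lang's theorem / the same successive-approximation argument over algebraically closed $k$ (Katz's $\sigma$-linear descent: a $\sigma^{-2}$-linear bijection of a lattice has a basis of fixed vectors). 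Choosing $e_1$ to be a fixed vector not in $FM$ and setting $e_2:=Fe_1$, we get $Ve_1=Fe_1=e_2$ and $Fe_2=F^2e_1=pe_1$, $Ve_2=VFe_1=pe_1$. Finally $e_1,e_2$ is a basis, because $\bar e_1\notin\bar F\bar M\ni\bar e_2\neq0$. So $M\simeq M_2$. Here $F$ and $V$ are topologically nilpotent, so $E[p]$ is local-local, hence $E[p](\bar k)=0$ and $E$ is supersingular. Since the two cases are exhaustive and $M_1\not\simeq M_2$, the ``resp.'' statements follow.

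The main obstacle is the normalization step in both cases: solving $\sigma$-semilinear fixed-point equations over $W(k)$. This needs $k$ algebraically closed, and it will be handled by successive approximation modulo $p^n$ using the surjectivity of $x\mapsto x^{p^r}-ax$ on $k$.
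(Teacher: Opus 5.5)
Your proposal is correct and follows essentially the same route as the paper. It uses the same case split $\bar F\bar M\neq \bar V\bar M$ versus $\bar F\bar M=\bar V\bar M$, and the same rank-one pieces $\bigcap_n F^nM$ and $\bigcap_n V^nM$ with normalized generators in the first case. In the second case it uses $F^2M=pM$ and descent to the skeleton $\{m: F^2m=pm\}$, which is exactly the fixed lattice of your $\phi=F^{-2}p$, with the choice $e_1\notin FM$, $e_2=Fe_1$.

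You give more detail than the paper on the normalization $c^{\sigma-1}=u^{-1}$ by Artin--Schreier approximation. On the other hand, your reason that $\bigcap_n F^nM$ has rank one (``stabilizes modulo $p$'') is thinner than the paper's length count modulo $p^m$. You would need that count, or a Fitting-decomposition argument on $M/p^nM$, to make this step complete.
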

\begin{proof}
    Put $\ol M:=M/pM$. Then we have either (a) $F\ol M \neq V \ol M$, or (b) $F\ol M = V \ol M$. In case (a), the maps $F: F\ol M\to F\ol M$ and $V: V\ol M \to V \ol M$ are bijective. Then there exist elements $e_1, e_2\in M\setminus pM$ such that $F e_1\equiv e_1 \mod pM$ and $Ve_2\equiv e_2 \mod pM$. 
     We now show that $F^\infty M:=\cap_{m\ge 1} F^m M$ is a free $W$-module of rank $1$. Consider the short exact sequence 
    \[ 0 \to F^m M/p^m M \to M / p^m M\to M/F^m M\to 0, \]
    and $F^m M/p^m M$ and $M/F^m M$ are $W$-modules of length $m$. Since $F^m e_1 \not \in pM$ for all $m\ge 1$, we see that both are free $W/p^mW$-modules of rank $1$. This shows that $F^\infty M$ is a free $W$-module of rank $1$. Since the map $F: F^\infty M\to F^\infty M$ is bijective, there exists a generator $e_1\in F^\infty M$ such that $F e_1=e_1$. Similarly, $V^\infty M:= \cap_{m\ge 1} V^m M$ is a free $W$-module of rank one and there exists a generator $e_2\in V^\infty M$ such that $Ve_2=e_2$ and therefore $M\simeq M_1$.
    Since $F$ is isomorphic to $F^\infty M$, the \dieu submodule $F^\infty M$ is \'etale, that is, it corresponds to the \'etale $p$-divisible group $E[p^\infty]^e$ of height one over $k$. This shows that $E[p](k)=\Z/p\Z$ and $E$ is ordinary. In case (b), we have $F^2 M=pM$. The skeleton $M^\diamond:=\{m\in M : F^2m=p m\}$ is a \dieu module over $\F_{p^2}$ and we have the isomorphism $M^\diamond \otimes_{W(\F_{p^2})} W(k)\isoto M$; see \cite[Sect.~5.7]{li-oort}. Taking an element $e_1\in M^\diamond \setminus (F,V) M^\diamond$ and letting $e_2:=Fe_1$,  then we have $M\simeq M_2$.
     Since there is no \'etale part of the $p$-divisible group $E[p^\infty]$, $E$ is supersingular. \qed 
\end{proof}


\subsection{Newton polygons and $a$-numbers}
To any relatively prime nonnegative integers $a$ and
$b$ with $(a,b)\neq (0,0)$, one associates a \dieu module over $\Fp$
\[ M_{a,b}:=W(\Fp)[F,V]/(F^a-V^b). \]
The rational number $\lambda=b/(a+b)$ is called the slope of $M_{a,b}$. 

\begin{thm}[{Manin-Dieudonn\'e \cite[Chap.~II, ``Classification Theorem'',
  p.~35]{manin:thesis}}]\label{thm:5}
Assume that $k$ is algebraically closed. For every \dieu module $M$ over $k$, there exist relatively prime pairs $(a_i,b_i)\neq (0,0)$ of
nonnegative integers and integers $m_i\ge 1$ for $i=1,\dots ,r$ such that
there is an isomorphism of so called rational \dieu modules
\begin{equation}
  \label{eq:2.1}
M\otimes_{W(k)} L \simeq \bigoplus_{i=1}^r M_{a_i,b_i}^{m_i} \otimes_{W(\Fp)} L.  
\end{equation} 
\end{thm}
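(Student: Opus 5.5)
We prove the Dieudonné–Manin classification for the $L$-isocrystal $N:=M\otimes_W L$, where $F$ is a $\sigma$-linear bijection. The strategy is to split off one isoclinic summand at a time, using the Newton polygon to identify the slopes, and then induct on $\dim_L N$. Throughout we use that $k$ is algebraically closed; only $F$ matters, since $V=pF^{-1}$ on $N$.

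\emph{Step 1: a pure vector of minimal slope.} Fix any $W$-lattice $M_0\subset N$, for instance $M$ itself. For each integer $s\ge 1$ put
\[ \mu_s:=\min\{\,v:\ F^sM_0\subset p^{v}M_0\,\}, \]
the largest $v$ with this inclusion. This sequence is superadditive, so the limit $\lambda=\lim_s\mu_s/s$ exists; it is the smallest Newton slope. The Newton slopes can alternatively be read off from the characteristic polynomial of $F$ with respect to a cyclic vector; this is Katz's approach and avoids the limit argument. Write $\lambda=b/(a+b)$ in lowest terms and put $n=a+b$. I want a nonzero $x\in N$ with
\[ F^{n}x=p^{b}x, \]
which is exactly the defining relation of $M_{a,b}$, because $F^a=V^b$ is equivalent to $F^{a+b}=p^b$ when $FV=p$. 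Consider the $\sigma^n$-linear operator $\Phi:=p^{-b}F^{n}$ on $N$. Let $M'$ be the lattice generated by the $\Phi^{j}M_0$ for $j\ge 0$. Minimality of the slope shows that $M'$ is a lattice, after possibly replacing $\Phi$ by the corresponding construction on the ``slope $\ge\lambda$'' part. Then $\Phi$ maps $M'$ into itself, and $\Phi$ does not act nilpotently on $M'/pM'$.

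\emph{Step 2: fixed vectors.} On the non-nilpotent part of $M'/pM'$ the operator $\Phi$ is bijective and $\sigma^n$-linear. Over an algebraically closed field such a map has a basis of fixed vectors; this is Lang's theorem, and concretely it comes down to solving $x^{p^n}=cx$-type equations. I would lift a fixed vector by successive approximation modulo $p^m$, exactly as in the rank-one argument of Lemma~\ref{lm:elliptic} case (a), to obtain $x$ with $\Phi x=x$. The vectors $x, Fx,\dots,F^{n-1}x$ then span an $F$-stable subspace isomorphic to $M_{a,b}\otimes L$. One checks its dimension is $n$ using $\gcd(a,b)=1$: by a valuation argument, any shorter relation would force a slope with smaller denominator.

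\emph{Step 3: splitting and induction.} The first two steps produce a sub-isocrystal $N_1\simeq M_{a,b}\otimes L$ of minimal slope. To obtain a direct sum I apply the same construction to the dual $N^t$, which produces a quotient of maximal slope. An alternative is to show that $\mathrm{Ext}^1(N'',N_1)=0$ whenever all slopes of $N''$ are $\ge\lambda$. For slopes $\mu>\lambda$ this vanishing amounts to solving $y-\Psi y=z$ with $\Psi$ topologically nilpotent; the series $\sum\Psi^j z$ converges, so the solution exists. For equal slopes it comes down to surjectivity of $\Phi-1$ on $L$, which holds because $k$ is algebraically closed. Induction on $\dim_L N$ then gives the decomposition \eqref{eq:2.1}.

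The main obstacle is Step 1: producing a $\Phi$-stable lattice on which $p^{-b}F^n$ is integral and not topologically nilpotent. This is precisely where the minimality of the slope is needed, and I expect it to require the most care.
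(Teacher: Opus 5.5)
Your Step~1 carries the whole difficulty of the theorem, and the argument you sketch for it does not go through. You need two facts about $\lambda=\lim_s\mu_s/s$:
\begin{itemize}
\item it is rational, with denominator at most the $W$-rank of $M$;
\item the defect $bj-\mu_{nj}$ is bounded. This is needed because $\sum_{j\ge0}\Phi^jM_0=\sum_j p^{-bj}F^{nj}M_0$ is a lattice exactly when $\mu_{nj}\ge bj-C$ for all $j$.
\end{itemize}
Superadditivity gives neither. Fekete's lemma only yields $\mu_{nj}\le bj$. Moreover, $\lfloor\lambda s-\sqrt s\rfloor$ with $\lambda$ irrational is a superadditive integer sequence with irrational limit and unbounded defect. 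For $\sigma$-linear maps both facts are true, but their natural proofs go through a lattice adapted to a pure summand of minimal slope, which is exactly what you are trying to construct. Your remedy of passing to ``the slope $\ge\lambda$ part'' presupposes a slope filtration. Calling $\lambda$ ``the smallest Newton slope'' is circular here, since the paper defines slopes via \eqref{eq:2.1}.

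Everything downstream of a $\Phi$-stable lattice is fine. Non-nilpotence of $\Phi$ modulo $p$ follows from the lattice-independence of $\lim_s\mu_s/s$. The lifting in Step~2 works because $\Phi-1$ is surjective on $M'/pM'$: by Lang's theorem on the bijective part, and because it is invertible on the nilpotent part. The determinant count $n\cdot v(\det F|_{N_1})=b\dim N_1$, together with $\gcd(n,b)=1$, shows that $x,Fx,\dots,F^{n-1}x$ are independent.

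The missing ingredient is the paper's twisted Hensel factorization. Your remark about Katz's cyclic vector is the right starting point, but reading off a rational slope is not enough: one must actually factor. After a twist, the paper writes a simple quotient of $N$ as $R/RP$ with $P=F^d+a_1F^{d-1}+\dots+a_d\in W[F]$, and takes $s/r=\min_i v(a_i)/i$, which is rational by construction. It then proves
\[ P=Q\,(F-p^{s/r})\,u \quad\text{in } W[p^{1/r}][F], \]
with $u$ a unit (Lemma~\ref{lm:4.2}). The proof is by successive approximation, each step solving an additive equation
\[ \bar x^{p^d}+\bar\alpha_1\bar x^{p^{d-1}}+\dots+\bar\alpha_d\bar x+\bar z=0 \]
in the algebraically closed field $k$. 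This produces a nonzero map $N\to N^{s/r}_k$ (Lemma~\ref{lm:4.3}), that is, a simple quotient rather than a sub-object, with no lattice-growth estimates at all.

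Your Step~3 alternative is then essentially Lemma~\ref{lm:4.1} together with Lemma~\ref{lm:4.0}. This Ext-vanishing holds for every pair of slopes, so neither the minimality of $\lambda$ nor the dual construction is needed for the splitting. For $\mu=s'/r'>\lambda$, the topologically nilpotent operator to use on $N^\lambda_k$ is $p^{s'}F^{-r'}$, not $p^{-s'}F^{r'}$.
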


We shall give a proof of the Manin-\dieu Theorem in Section 4. 
The \emph{slope sequence} of $M$ is defined to be the tuple $(\lambda_1^{\oplus
  (a_1+b_1)m_1},\dots ,\lambda_r^{\oplus
  (a_r+b_r)m_r})$ ordered so that $\lambda_i<\lambda_{i+1}$ for all $i$,
  where $\lambda_i:=b_i/(a_i+b_i)$, and the rational numbers $\lambda_1,\dots, \lambda_r$
  are called the \emph{slopes} of $M$. When $k$ is an arbitrary perfect field, the slope sequence and slopes of a \dieu module $M$ over $k$ are defined to be those of $M\otimes_{W(k)} W(\bar k)$, where $\bar k$ is an algebraic closure of $k$.
  

The \emph{$a$-number} of $M$ is defined by \[ a(M):=\dim_k M/(F,V)M. \] 
One has $a(M\otimes W(k'))=a(M)$ for any perfect field extension $k'/k$. One also has $a(M^t)=a(M)$.
Clearly, $a(M_{1,1}^{\oplus g})=g$. Conversely, if $M$ is
a \dieu module with $\dim M/VM=\dim M/FM=a(M)=g$ and $k$ is algebraically closed, then we have $M\simeq M_{1,1}^{\oplus g}\otimes W(k)$. The proof is the same as that in Lemma~\ref{lm:elliptic}.

For any abelian variety $X$ over $k$, the contravariant \dieu module associated with $X$ is defined by $M(X):=M(X[p^\infty])$.
The slope sequence and slopes of an abelian variety $X$ over a field $K$ are defined as those of $M(X\otimes_K \ol K)$. The $a$-number of $X$ is defined by 
\begin{equation}\label{eq:2.2}
a(X):=\dim_{\ol K} \Hom(\alpha_p \otimes \ol K, X\otimes \ol K)  
\end{equation}
where $\alpha_p:=\Spec \Fp[t]/(t^p)$ is the kernel of the Frobenius morphism $\Ga\to \Ga$, $x \mapsto x^p$ of the additive group $\Ga$ over $\Fp$. Then $a(X)=a(M(X\otimes \ol K))$.

\begin{remark}
By \cite[Proposition 2.5]{li-oort}, for any commutative finite group scheme $G$ over a field $K$ there exists a smallest subgroup scheme $A(G)$ of $G$ containing all of the images $\alpha_p \to G$ over $K$. We give an example showing that $ A(G)\otimes_K K'\subsetneq A(G\otimes_K K')$ may occur for a field extension $K'/K$.

Consider the  commutative finite group scheme $G$ over a non-perfect field $K$ such that its Cartier dual $G^D=\Spec A$ is a subgroup scheme of $(\Ga)^2$, where $A=K[x,y]/(x^{p^2}, y^p-\lambda y^p)$ with $\lambda\in K \setminus K^p$. Let $K':=K[\mu]$, where $\mu^p=\lambda$. As any map $g: \alpha_{p,K} \to G$ induces a map $g^D: G^D=\Spec A \to \alpha_{p,K}^D=\alpha_{p,K}=\Spec K[t]/(t^p)$, the element $f=(g^D)^*(t)$ satisfies $m^*(f)=f\otimes 1+1\otimes f$ and $f^p=0$, where $m^*: A\to A\otimes_K A$ is the co-multiplication of $A$. Therefore any map $(g^D)^*: K[t]/(t^p)\to A$ factors through the subalgebra $K[T]\subseteq A$, where 
\[ T:=\{f\in A: m^*(f)=f\otimes 1+1\otimes f, f^p=0\}.\]
Alternatively, the surjective map $g^D: G^D \to \alpha_{p,K}$ factors through $G^D \onto H$, where $H=\Spec K[T]$. Then $A(G):=H^D\subseteq G$ is the smallest subgroup scheme containing all of the images $\alpha_p \to G$ over $K$.
To show $ A(G)\otimes_K K'\subsetneq A(G\otimes_K K')$, it suffices to show $T\otimes_K K' \subsetneq T'$, where 
\[ T':=\{f\in A\otimes_K K': m_{K'}^*(f)=f\otimes 1+1\otimes f, f^p=0\}.\]
One computes that
\[ \begin{split}
    T &=\{f\in A: f \text{ is an additive function and } f^p=0\} \\
      &=\{f\in \<1, x, x^p,y, y^p=\lambda x^p \>_K: f^p=0\}=\<x^p\>_K.
\end{split} \]
Similarly, we have 
\[ T'
   =\{f\in \<1, x, x^p,y \>_{K'}: f^p=0\}=\<x^p, y-\mu x \>_{K'}. \]
Thus, $T\otimes_K K' \subsetneq T'$ and $A(G)\otimes_K K'\subsetneq A(G\otimes_K K')$.  

Observe that $\dim_K \Hom_K(\alpha_{p,K},G)=\dim T$. We have then
\[ \dim_K \Hom_K(\alpha_{p,K},G) < \dim_{K'} \Hom_{K'} (\alpha_{p,K'},G\otimes_K K'). \]
This explains why one defines $a(X)$ by \eqref{eq:2.2}.
\end{remark}

\subsection{Cartier-\dieu theory} \label{sec:2.3}
We briefly introduce Cartier-\dieu theory. The theory classifies commutative smooth formal groups over a general base and provides a framework for deformation theory of $p$-divisible smooth formal groups in terms of covariant \dieu modules. 
We follow the convenient setting of \cite[Section 0]{norman:algo} and 
\cite[Section 2, p.~1011]{chai-norman:gamma2}.

\newcommand{\Cart}{{\mathrm{Cart}_p}}

Let $R$ be a commutative ring of characteristic $p$.
Let $W(R)$ denote the ring of
Witt vectors over $R$, equipped with the Verschiebung $\tau$ and
Frobenius $\sigma$:
\begin{equation*}
  \begin{split}
    (a_0,a_1,\dots)^\tau&=(0, a_0, a_1,\dots) \\
    (a_0,a_1,\dots)^\sigma&=(a_0^p, a_1^p,\dots).
  \end{split}
\end{equation*}
Let $\Cart(R)$ denote the Cartier ring $W(R)[F][\![V]\!]$ modulo the
relations
\begin{itemize}
\item[$\bullet$] $FV=p$ and $VaF=a^\tau$,
\item[$\bullet$] $Fa=a^\sigma F$ and $Va^\tau=aV, \ \forall\, a\in W(R).$
\end{itemize}

\begin{defn}
A left $\Cart(R)$-module is \emph{uniform} if it is complete and
  separated in the $V$-adic topology. A uniform $\Cart(R)$-module $M$ is
  \emph{reduced} if $V$ is injective on $M$ and $M/VM$ is a finite free
  $R$-module. A (covariant) \emph{\dieu module over $R$} is a finitely generated
  reduced uniform $\Cart(R)$-module.
\end{defn}

\begin{defn}
    {\rm (1)} Let $n\in \Z_{\ge 0}$. An $n$-dimensional \emph{commutative smooth formal group} over $R$ is a smooth formal scheme $G$ over $R$ of dimension $n$ together with a commutative formal group law ($m:G\wh\otimes_R G\to G$ and $\varepsilon: \Spec R \to G$). More precisely, $(G,m)\simeq (\Spf R[\ul X], F(\ul X,\ul Y))$, where $\ul X=(X_1,\dots , X_n)$, $\ul Y=(Y_1,\dots , Y_n)$ and 
    \[ F(\ul X,\ul Y)=(F_1(\ul X, \ul Y), \dots, F_n(\ul X,\ul Y))\in R[\ul X,\ul Y]^n \] 
    such that 
    \begin{itemize}
        \item [(i)] $F(\ul X,\ul Y)=\ul X+\ul Y +$ terms of higher degree;
        \item [(ii)] $F(\ul X, F(\ul Y,\ul Z))=F(F(\ul X, \ul Y), \ul Z)$;
        \item[(iii)] $F(\ul Y,\ul X)=F(\ul X, \ul Y)$. 
    \end{itemize}
    It follows from (i) and (ii) that $F(\ul X,0)=F(0,\ul X)=\ul X$ and there exists a unique power series $\psi(\ul X)\in R[\ul X]$ such that $F(\psi(\ul X), X)=F(X,\psi(\ul X))=0$.

    {\rm (2)} A \emph{$p$-divisible smooth formal group} over $R$ is a commutative smooth formal group over $R$ such that 
    \begin{itemize}
        \item [(i)] $G=\bigcup_n G[p^n]$;
        \item [(ii)] the multiplication map $p:G\to G$ is surjective  
    \end{itemize}
    as fppf sheaves on the category of Artinian $R$-algebras.
\end{defn}
The definition of $p$-divisible groups over a general base $R$ of characteristic $p$ is analogous to Definition~\ref{def:DM}(2). A connected $p$-divisible group can be also regarded as a $p$-divisible smooth formal group.

\begin{thm}[{\cite{zink:cartier}, \cite[Theorem 3.3]{chai:cartier}}]\label{thm:cartier} 
   There is an equivalence of categories between the category of finite-dimensional commutative smooth formal groups over $R$ and the category of
covariant \dieu modules over $R$.  
\end{thm}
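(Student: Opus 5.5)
The plan is to construct the functor explicitly via Cartier's theory of curves and then check that it is fully faithful and essentially surjective. For a commutative smooth formal group $G$ over $R$, I would take
\[ M(G):=\mathcal C(G)=\Hom_{\text{fml}}(\wh{\mathbb W}, G), \]
the module of $p$-typical curves. Equivalently, it is the set of formal curves $\gamma\in G(tR[\![t]\!])$ modulo the $p$-typical projector. The first step is to define the $\Cart(R)$-action on $\mathcal C(G)$. Multiplication by $[a]$ for $a\in R$ is $\gamma(t)\mapsto\gamma(at)$, extended to $W(R)$ by $V$-adic sums. $V$ acts by $\gamma(t)\mapsto\gamma(t^p)$. $F$ acts by $\gamma(t)\mapsto\sum_{\zeta^p=1}\gamma(\zeta t^{1/p})$, made sense of universally over a ring containing the $p$-th roots and then descended. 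One checks the relations $FV=p$, $VaF=a^\tau$, $Fa=a^\sigma F$ and $Va^\tau=aV$ on the universal formal group law, where they reduce to identities for Witt vectors.

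The second step is to verify that $\mathcal C(G)$ is reduced, uniform and finitely generated. The filtration $V^n\mathcal C(G)$ consists of curves vanishing to order $p^n$. Completeness and separatedness then follow because $G$ is a formal group. For the quotient, the coordinate curves $\gamma_i(t)=(0,\dots,t,\dots,0)$ give an isomorphism $\mathcal C(G)/V\mathcal C(G)\simeq \Lie(G)\simeq R^n$, which is finite free. I would prove that $V$ is injective, together with the identification of the $V$-graded pieces, by choosing a coordinate system in which the $p$-typical curves take the form $\sum_{m,i} V^m[a_{mi}]\gamma_i$. This is the structure theorem for curves, giving the bijection $\prod_{m\ge0} R^n\isoto \mathcal C(G)$.

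The third step is to produce a quasi-inverse. Given a reduced Dieudonn\'e module $M$, choose $V$-basis lifts $e_1,\dots,e_n$ of a basis of $M/VM$. Then write $Fe_i=\sum_{m,j}V^m[c_{mij}]e_j$, which is possible by the uniform and reduced hypotheses. This gives a presentation $M\simeq \Cart(R)^n/(F-\sum V^m[c_m])\Cart(R)^n$. I would attach to these structure constants a formal group law through a Hazewinkel-type functional equation or logarithm. To avoid denominators in characteristic $p$, I would do this universally over a $\Z$-flat ring, such as the polynomial ring in the $c_{mij}$, and then specialise. I would then show that the curves module of this group law recovers the presentation, and that the group law is independent of choices up to isomorphism.

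The main obstacle is full faithfulness, that is, showing $\Hom(G,H)\to\Hom_{\Cart}(\mathcal C(G),\mathcal C(H))$ is bijective over a non-reduced $R$ of characteristic $p$. The difficulty is that logarithms are unavailable there. My first attempt would be to exploit the fact that a homomorphism is determined by its values on the generators $\gamma_i$, since every curve is a $V$-adic combination of them. Conversely, a Cart-linear map sends $\gamma_i$ to curves $\delta_i$ in $H$, and one must show that $\sum\delta_i(X_i)$ defines a homomorphism. For this I would reduce to the universal case over a $\Z_{(p)}$-flat base, where the statement can be checked after $\otimes\Q$ using logarithms, and then descend by base change. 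Since all the constructions commute with base change, this reduction should suffice. For the detailed verifications I would follow Zink's treatment \cite{zink:cartier}.
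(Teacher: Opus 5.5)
The paper gives no proof of this theorem; it only cites Zink and Chai. Your Steps 1--3 are a reasonable outline of Cartier theory. The flat-base trick is also legitimate where you use it for essential surjectivity. Over $\Z_{(p)}[c_{mij}]$, the functional-equation group law and the relations $F\gamma_i=\sum_{m,j}V^m[c_{mij}]\gamma_j$ are \emph{identities} of power series. You check them over $\Q$, they then hold over the torsion-free base, and identities specialise to $R$.

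The gap is in fullness. What you must prove there is an \emph{implication}: if $\alpha$ is $\Cart(R)$-linear, then $\Phi=\sum^H_i\delta_i(X_i)$ is a homomorphism. Checking an implication after $\otimes\Q$ and specialising only works if one of two things holds:
\begin{itemize}
\item the locus cut out by the hypothesis (Cartier-linearity of $\alpha$) is $\Z_{(p)}$-flat, or
\item every characteristic-$p$ triple $(G,H,\alpha)$ lifts to a flat base.
\end{itemize}
Lifting $G$ and $H$ to the universal ring of structure constants does not lift $\alpha$ as a Cartier-linear map, and your sketch supplies no such lift.

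The obstruction is real. Fix $G=H=\wh{\Ga}$ over a $\Z_{(p)}$-algebra, so $F\gamma=0$ for $\gamma=t$. The map $\gamma\mapsto\sum_m V^m[x_m]\gamma=\sum_m x_mt^{p^m}$ is Cartier-linear iff $F(\sum_m x_mt^{p^m})=\sum_{m\ge1}px_mt^{p^{m-1}}$ vanishes, i.e.\ iff $px_m=0$ for all $m\ge1$. So the parameter ring is $\Z_{(p)}[x_0,x_1,\dots]/(px_1,px_2,\dots)$. Its $p$-torsion is exactly where the characteristic-$p$ endomorphisms $X\mapsto\sum_m x_mX^{p^m}$ live, for instance Frobenius, $\gamma\mapsto V\gamma$. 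After $\otimes\Q$ they are gone, so logarithms cannot see them.

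The missing ingredient is Cartier's Hauptsatz, which is hidden in your word ``equivalently'' in Step~1. It says that $\wh W$ co-represents $p$-typical curves: $\Hom(\wh W,H)\isoto\mathcal C(H)$, compatibly with $\Cart(R)\simeq\End(\wh W)^{\opp}$. It is proved integrally, without logarithms: for $\wh{\Lambda}$, one sends $\prod_i(1-x_it)$ to $\sum^H_i\gamma(x_i)$, which is symmetric in the $x_i$.

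Combine this with the exact sequence $0\to\wh W^n\xrightarrow{F-C}\wh W^n\xrightarrow{(\gamma_i)}G\to0$ induced by your presentation of $\mathcal C(G)$; its exactness is the other key lemma. A Cartier-linear $\alpha$ then gives $(\delta_i):\wh W^n\to H$, which kills the image of $F-C$ because $\alpha$ respects the relations. Hence it factors through a homomorphism $G\to H$. Equivalently, $\Hom(\wh W\otimes_{\Cart(R)}M,H)=\Hom_{\Cart(R)}(M,\Hom(\wh W,H))$. This is essentially the route taken in the cited references.
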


We denote the covariant \dieu functor by $\bfD_*$. The tangent space of a commutative smooth formal group $G$ is canonically isomorphic to
$\bfD_*(G)/V\bfD_*(G)$.

If $G$ is a connected $p$-divisible group over $k$, then the covariant \dieu module $\bfD_*(G)$ is a torsion-free \dieu module $M$ over $k$ 
(Definition~\ref{def:DM}) such that $V^m M\subseteq pM$ for some $m\ge 1$. Moreover,
there is a canonical isomorphism $M(G)^t\simeq \bfD_*(G)$ of \dieu modules. 
Let $G^{(p)}$ denote the pull-back of $G/k$ by the absolute Frobenius map $F_{\Spec k}: \Spec k \to \Spec k$.
The relative Frobenius morphism $F_{G/k}: G\to G^{(p)}$ induces the operator $F$ on $M(G)$ and the operator $V$ on $\bfD_*(G)$, respectively. The relative Verschiebung morphism $V_{G/k}: G^{(p)}\to G$ induces the operator $V$ on $M(G)$ and the operator $F$ on $\bfD_*(G)$, respectively. Thus, the  map $V_{G/k}: \Lie(G^{(p)}) \to \Lie(G)$ induces the $\sigma$-linear map $F$ on $\bfD_*(G)/V \bfD_*(G)$.

The covariant functor $\bfD_*$ can only apply to the smooth part of a $p$-divisible group. In order to carry the information of the \'etale part,     
the covariant \dieu module of an abelian variety $X$ over $k$ is defined by 
\[ D_*(X):=\bfD_*(X[p^\infty]^0)\oplus M(X[p^\infty]^{e})^t, \]
where $X[p^\infty]^0$ and $X[p^\infty]^{e}$ are respective connected and \'etale parts of the $p$-divisible group $X[p^\infty]$. 

\subsection{The construction of deformations} \label{sec:2.4} 
Let $M_0$ be the covariant \dieu module of a $p$-divisible smooth formal group $G_0$ over an \ac field $k$. Let $g:=\dim_k M_0/V M_0$ and $h=\dim_k (V M_0)/p M_0$.
Choose a $W$-basis $e_1,\dots , e_{g+h}$ of $M_0$ such that
\begin{equation}
\begin{split}
    F e_i &=\sum_{j=1}^{g+h} a_{ij} e_j, \quad  \quad (i=1,\dots, g), \\ 
    e_i &= V(\sum_{j=1}^{g+h} a_{ij} e_j),  \quad (i=g + 1, \dots, g + h)  
\end{split}
\end{equation}
for some invertible matrix $(a_{ij})$ with entries in $W$.
 
 \begin{thm}[{\cite[Theorem 1]{norman:algo}}]\label{thm:defo}
    Let $(R,\grm)$ be a local Artinian $k$-algebra with residue field $k$.  
    There is a one-to-one correspondence between isomorphism classes of deformations of $M_0$ over $R$ and maps 
    \[ \begin{split}
        d: VM_0/M_0 & \longrightarrow \grm \otimes_k  (M_0/ V M_0),\\ \ol e_i & \mapsto \sum_{j=1}^g \bar d_{ij} \otimes \ol e_j, \quad (g+1\le i \le g+h),
    \end{split} \]
    where $\ol e_i$, $\ol e_j$ are the images of $e_i, e_j$ in $VM_0/p M_0$ and $M_0/ V M_0$ respectively. Set
 $d_{ij} = (\bar d_{ij}, 0, ...) \in W(R)$ for $g+1\le i \le g+h$ and $1\le j \le g$, and $d_{ij} = 0$ otherwise. The \dieu module $M_d$ corresponding to the map $d$ is defined by the generators $e_1,\dots, e_{g+h}$ and relations
\begin{equation}\label{eq:2.4}
    \begin{split}
    F e_i &=\sum_{j=1}^{g+h} a_{ij} \left (e_j+\sum_{k=1}^g d_{jk} e_k \right ), \quad  \quad (i=1,\dots, g), \\ 
    e_i &= V\left (\sum_{j=1}^{g+h} a_{ij} \left (e_j+\sum_{k=1}^g d_{jk} e_k \right ) \right ),  \quad (i=g + 1, \dots, g + h).  
\end{split}
\end{equation} 
 Furthermore every \dieu module over $R$ which reduces to $M_0$ over $k$ is isomorphic to exactly one \dieu module $M_d$. 
 \end{thm}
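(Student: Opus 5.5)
We would prove Theorem~\ref{thm:defo} by combining Grothendieck--Messing style deformation theory with the Cartier--Dieudonn\'e equivalence of Theorem~\ref{thm:cartier}. Concretely, deformations of $G_0$ over $R$ correspond to covariant \dieu modules $M$ over $R$ (reduced uniform $\Cart(R)$-modules) together with an identification $M\otimes k\simeq M_0$. The plan has two parts. First, show that every such $M$ is isomorphic to some $M_d$. Second, show that $M_d\simeq M_{d'}$ (compatibly with the identification with $M_0$) forces $d=d'$.

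\emph{Step 1: each $M_d$ is a deformation.} Define $M_d$ as the quotient of the free $\Cart(R)$-module on $e_1,\dots,e_{g+h}$ by the relations \eqref{eq:2.4}. Since $(a_{ij})$ is invertible and the $d_{jk}$ have coefficients in $\grm$, the matrix $(a_{ij})(1+d)$ is still invertible over $W(R)$. We would then rewrite the relations as a ``display''. The $g$ elements $e_1,\dots,e_g$ freely generate $M_d$ as a uniform $\Cart(R)$-module, with $e_{g+1},\dots,e_{g+h}$ expressed via $V$. We would check that $V$ is injective and that $M_d/VM_d$ is free over $R$ on $\bar e_1,\dots,\bar e_g$, by a standard filtration argument on $V$-adic expansions, using that $R$ is Artinian so all relevant series converge. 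Reducing modulo $\grm$ kills the $d_{jk}$ and recovers $M_0$.

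\emph{Step 2: surjectivity.} Let $M$ be a deformation of $M_0$. Lift $\bar e_1,\dots,\bar e_g$ to elements $e_i'\in M$; these give a basis of $M/VM$. The images of $Fe_i'$ and of the $V$-preimages of the $e_i$ for $i>g$ can be expanded in the $e'$'s with coefficients in $W(R)$ that lift $(a_{ij})$. The key normalization is to change the lifts $e_j'$, by adding elements of $W(\grm)$-multiples, so that the correction term has the special form $\sum_{k\le g} d_{jk}e_k$, with $d_{jk}$ Teichm\"uller and nonzero only for $j>g$. We would do this by induction on the length of $R$ (or on powers of $\grm$), using small extensions $0\to I\to R'\to R\to 0$ with $\grm I=0$. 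At each step the ambiguity is an $I$-valued linear map, and the obstruction to normalizing it is absorbed by a base change of the $e'_i$. This is the step I expect to be the main obstacle: one must show that all corrections in the $F$-rows (indices $\le g$) and all non-Teichm\"uller Witt components can be eliminated by a coordinate change. The reason this should work is that $\sigma$ kills $\grm$-level terms modulo higher order when $\grm I=0$ ($\sigma(I)=I^p=0$), and $V$-multiples of Witt vectors are absorbed via $VaF=a^\tau$.

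\emph{Step 3: uniqueness.} Suppose $\phi:M_d\isoto M_{d'}$ reduces to the identity. Then $\phi$ induces the identity on $M_0/VM_0$ modulo $\grm$, and it preserves the Hodge filtration $VM/pM$ inside $M/pM$. We would compare this filtration with the one given by the map $d$, namely the graph of $d$ in $\grm\otimes(M_0/VM_0)$. Here the crystal $M/pM$ is canonically trivialized as $M_0\otimes R$, which is the Grothendieck--Messing input that I would take from Norman or Zink. This gives $d=d'$. For this argument, the rigidity of the identification is exactly what makes the correspondence bijective on isomorphism classes of deformations.
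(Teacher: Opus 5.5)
The paper contains no proof of this theorem. It is quoted from Norman [Theorem 1] and used as a black box in Lemma~\ref{lm:12}, so your proposal can only be judged on its own. Your architecture is the natural one: construct $M_d$, then induct over small extensions $0\to I\to R'\to R\to 0$ with $\grm I=0$ using Grothendieck--Messing. However, the decisive computation is missing and two ingredients are wrong as stated.

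First, the framework. Over a non-perfect $R$ the Cartier module is in general not spanned over $W(R)$ by lifts of a basis; elements such as $V[x]e_i$ with $x\in\grm$ typically are not in that span. Consequently $M/pM$ is not the value of the \dieu crystal, the ``expansions with coefficients in $W(R)$'' of Step 2 are not meaningful, and the elements $e_{g+1},\dots,e_{g+h}$ of $M$ are never defined. The right object is Zink's display $P=\bigoplus_i W(R)e_i$, with $T=\<e_1,\dots,e_g\>$, $L=\<e_{g+1},\dots,e_{g+h}\>$ and $Q=I_RT\oplus L$, where $I_R=VW(R)$. The relations \eqref{eq:2.4} say exactly that $F|_T$ and $V^{-1}|_L$ have structure matrix $(a_{ij})(1+D)$ with $D=(d_{jk})$. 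This matrix is invertible because $D$ is strictly block lower triangular, which disposes of Step 1 (there $e_1,\dots,e_g$ form a $V$-basis, not a free generating set). The crystal at $R$ is $P/I_RP$, with Hodge filtration $Q/I_RP$.

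Second, there is no canonical identification of this crystal with $M_0\otimes R$ over all of $R$. That would require nilpotent divided powers on $\grm$. In characteristic $p$ every element of a PD ideal satisfies $x^p=p!\,\gamma_p(x)=0$, which fails e.g.\ for $R=k[t]/(t^{p+1})$. Filtrations can only be compared one small extension at a time, using the trivial divided powers on $I$ and working relative to the lift already fixed over $R'/I$.

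The missing computation is the following. Let $d,d'$ be defined over $R'$ and agree modulo $I$, and put $y=\bar d-\bar d'$, an $h\times g$ matrix over $I$. Because $\grm I=0$:
$[\bar d'_{jk}]+[y_{jk}]=[\bar d_{jk}]$, and (as you note) $\sigma(W(I))=0$. For the trivial divided powers, the logarithmic coordinates on $W(I)$ coincide with the Witt coordinates, so Zink's extension of $V^{-1}$ to $Q+W(I)P$ kills $[y]P$.

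Define the $W(R')$-linear map $\phi$ by $\phi(e_i)=e_i$ for $i\le g$ and $\phi(e_i)=e_i-\sum_{k\le g}[y_{ik}]e_k$ for $i>g$. A direct check shows that $\phi$ commutes with $F$ and with the extended $V^{-1}$, and reduces to the identity over $R'/I$. Hence $\phi$ is the canonical identification of crystals. It carries the Hodge filtration of $M_d$ to $\<\bar e_i-\sum_k y_{ik}\bar e_k\>_{i>g}$, while that of $M_{d'}$ is $\<\bar e_i\>_{i>g}$.

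Combine this with Zink's Grothendieck--Messing theorem for nilpotent displays: lifts along a nilpotent PD thickening correspond to lifts of the Hodge filtration, and these form a torsor under $\Hom_k(\bar L,\bar T)\otimes_k I$. Together with rigidity of isomorphisms of deformations, this gives both surjectivity and injectivity of $d\mapsto M_d$ at each inductive step.

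In particular, the explicit normalization you single out as the main obstacle is not needed. Done by hand, it is harder than your sketch suggests. Removing the non-Teichm\"uller Witt components of the $(L,T)$-block requires a recursion through all Witt components, and that recursion terminates only because $G_0$ is connected (Zink's nilpotence condition). Your argument never uses that hypothesis.
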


\section{Proof of the Manin-\dieu theorem}

We provide a proof of the Manin-\dieu Theorem for the reader's convenience, as this is a very important result in \dieu theory. We follow the reference Demazure~\cite{demazure}. 
In this section, we let $k$ be a perfect field of characteristic $p$, and assume it to be algebraically closed in Subsection~\ref{sec:4.3}.

\subsection{$F$-spaces}

Recall that $L=W[1/p]$ denotes the fraction field of the ring $W=W(k)$ of Witt vectors over $k$. 
\begin{defn}

    (1) An \emph{$F$-space over $k$} is a finite-dimensional $L$-vector space $N$ together with a bijective $\sigma$-linear map $F:N\isoto N$. 

    (2) An \emph{$F$-lattice over $k$} is a finite free $W$-module $M$ together with an injective $\sigma$-linear map $F:M\to M$. 

    (3) An $F$-space $N$ over $k$ is called \emph{effective} if it contains an $F$-lattice of full rank. 
\end{defn}
An $F$-lattice $M$ is a \dieu module if and only if $pM \subseteq FM$. 
For any $\lambda\in \Q$, write $\lambda=s/r$ with $r\ge 1,s\in \Z$ and $(r,s)=1$. Define
\[ N^\lambda:=\Qp[T]/(T^r-p^s)\]
and let $F$ be the multiplication by $T$. Then $(N^\lambda, F)$ is an $F$-space over $\Fp$. Define
\[ N^\lambda_k:=L\otimes_{\Qp} N^\lambda, \quad F(a\otimes x):=\sigma(a)\otimes F(x), \quad  a\in L,\ x\in N^\lambda.\]
Then $(N^\lambda_k,F)$ is an $F$-space over $k$. Put $e_i:=T^{i-1}$, then we have an $L$-basis $e_1,\dots, e_{r}$ of $N^\lambda_k$ with relations
\[ F(e_1)=e_2, \quad F(e_2)=e_3, \ \dots \  , F(e_{r-1})=e_r, \quad F(e_r)=p^s e_1. \]
If $\lambda\ge 0$ (i.e.~$s\ge 0$), we put
\[ M^\lambda:=\Zp[T]/(T^r-p^s), \quad M^\lambda_k:=W\otimes_{\Zp} M^\lambda,\]
and then $(M^\lambda_k, F)$ is an $F$-lattice over $k$ and the $F$-space $N^\lambda_k$ is effective.

Set $\xi=p^{1/r}$ and $L[\xi]=L[p^{1/r}]=L[X]/(X^r-p)$, which is a field extension of $L$ of degree $r$. Regard $L[\xi]$ as an $L$-vector space and define
$F_s(\xi^i)=\xi^{i+s}$ for all $i$.
Then $(L[\xi], F_s)$ is an $F$-space over $k$. Put $e_i:=\xi^i$, for $i\in \Z$, then $L[\xi]$ is generated by $e_i$ with relations
\[ e_{i+r}=pe_i, \quad F_s(e_i)=e_{i+s}, \quad  i\in\Z. \]
For example, if $r=5$ and $s=2$, then $F_s$ acts as
\[ e_0 \longmapsto e_2 \longmapsto  e_4 \longmapsto \  e_6=p e_1 \  \longmapsto  \ e_8=pe_3 \ \longmapsto  \ e_{10}=p^2 e_0.\]
Observe that if $0\le s\le r$, then $(W[\xi], F_s)$ is a \dieu module, while the $F$-lattice $(M^\lambda_k, F)$ is not a \dieu module except when $s=0,1$. There is an isomorphism 
\[ N_k^\lambda \simeq (L[\xi], F_s), \quad \{1,T,\dots, T^{r-1} \} \longmapsto \{e_0, e_s, \dots, e_{s(r-1)} \}. \]

\npr {\it Notation.} 
For two $F$-spaces $(N,F), (N',F')$ over $k$, denote \[ N\otimes N':=(N\otimes_{L} N', F''), \] 
where $F''(x\otimes x')=F(x)\otimes F'(x')$ for $x\in N, x'\in N'$. 
For any $n\in \Z$, denote $\bbO(n):=(L\cdot e, F(e)=p^{-n} e)$. 
For an $F$-space $N$ over $k$, denote \[ N(n):=N\otimes \bbO(n)=(N, F_n), \quad F_n:=p^{-n}F. \] 
Clearly, $N(-n)$ is effective for any sufficiently large $n$. 

\subsection{Endomorphism algebras of $F$-spaces}

Recall that a cyclic algebra over a field $K$ of any characteristic of degree $r$ is a triple 
\[ (K_r/K, \tau, a):=\bigoplus_{i=0}^{r-1} K_r \eta^i=K_r\<\eta\>, \] 
where 
$K_r/K$ is a cyclic extension of degree $r$ with Galois group $\Gal(K_r/K)=\< \tau \>$, $a\in K^\times$, and $\eta$ is an element satisfying the relations 
\[ \eta^r=a\quad  \text{and} \quad \eta x =\tau(x) \eta, \quad    x\in K_r. \] 
Every cyclic algebra $(K_r/K, \tau, a)$ is a central simple algebra over $K$ of degree $r$ (i.e.~of dimension $r^2$ over $K$). One has $(K_r/K, \tau, 1)\simeq \Mat_r(K)$, and 
\[ (K_r/K, \tau, a_1)\simeq (K_r/K, \tau, a_2) \iff a_1 a_2^{-1} \in \Nr_{K_r/K}(K^\times_r). \]  

If $K$ is a nonarchimedean local field, then every central simple algebra $A/K$ of degree $r$ is isomorphic to $(K_r/K,\sigma, a)$, where $K_r/K$ is the unramified extension of degree $r$, $\sigma$ the arithmetic Frobenius in $\Gal(K_r/K)$ and $a\in K^\times$. The invariant of $A$ is defined by
\[ \inv(A):=\frac{v(a)}{r} \ \ { \rm mod } \ \Z,\]
where $v$ is the normalized valuation of $K$ so that $v(\pi)=1$ for any uniformizer $\pi$ of $K$. 

Let $\lambda=s/r\in \Q$. Let $\Q_{p^r}$ be the unique unramified extension of $\Qp$ of degree $r$. There exist $a,b\in \Z$ such that $ar-bs=1$. Put
\[ \begin{split}
    K^\lambda &:=\Q_{p^r}\<\xi\>: 
     \ \xi^r=p, \ \xi \alpha=\sigma^{-b}(\alpha) \xi, \ \alpha\in \Q_{p^r}
    \\
    &=(\Q_{p^r/\Qp}, \sigma^{-b}, p).
\end{split} \]
Then $K^\lambda$ is a central simple algebra over $\Qp$ of degree $r$. Putting $\xi':=\xi^s$, one has
\[ (\xi')^r=p^s, \quad \xi' \alpha=\xi^s \alpha=\sigma^{-bs}(\alpha) \xi^s= \sigma(\alpha) \xi'. \]
and hence $K^\lambda=(\Q_{p^r}/\Qp, \sigma, p^s)$. Thus, 
\[ \inv(K^\lambda)=\lambda \ \ {\rm mod} \ \Z. \]

\begin{thm}\label{thm:endo}
    Let $\lambda=s/r$ and assume $k\supseteq \F_{p^r}$. Then $\End(N_k^\lambda)\simeq (K^\lambda)^{\opp}$ and it is a central division algebra over $\Qp$ of degree $r$ with invariant $-\lambda \mod \Z$.  
\end{thm}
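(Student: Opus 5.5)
The plan is to work with the explicit model $N^\lambda_k\simeq (L[\xi],F_s)$, where $F_s(a\xi^i)=\sigma(a)\xi^{i+s}$ for $a\in L$. I will describe all $F_s$-equivariant $L$-linear endomorphisms directly, identify the algebra they form with a cyclic algebra, and then read off the invariant.

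First I show that an endomorphism is determined by the image of $e_0=1$, and I determine which images are allowed. Since $(r,s)=1$, the vectors $F_s^j(e_0)=\xi^{js}$ for $0\le j\le r-1$ form an $L$-basis up to powers of $p$. Hence $N^\lambda_k\simeq L\{F\}/L\{F\}(F^r-p^s)$ as a module over the twisted polynomial ring, generated by $e_0$. An endomorphism $\varphi$ is therefore given by $v=\varphi(e_0)$, subject only to $F_s^r v=p^s v$, and then $\varphi(F^j e_0)=F^j v$.

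Next I solve this condition. For $v=\sum a_i\xi^i$ we have $F_s^r v=p^s\sum \sigma^r(a_i)\xi^i$, so the condition is $\sigma^r(a_i)=a_i$ for all $i$. Because $k\supseteq\F_{p^r}$, the fixed field $L^{\sigma^r}$ equals $W(\F_{p^r})[1/p]=\Q_{p^r}$. So $\End(N^\lambda_k)$ has $\Qp$-dimension $r\cdot r=r^2$ and is spanned by two kinds of elements:
\begin{itemize}
\item the operators $\varphi_\alpha$ with $\varphi_\alpha(e_0)=\alpha e_0$ for $\alpha\in\Q_{p^r}$;
\item the operator $\Psi$ of multiplication by $\xi$, i.e.\ $\Psi(\xi^i)=\xi^{i+1}$, which visibly commutes with $F_s$.
\end{itemize}
The elements $\varphi_\alpha\Psi^i$ for $0\le i<r$ then span $\End(N^\lambda_k)$.

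The key computation, and the only real bookkeeping obstacle, is the commutation rule between $\Psi$ and $\varphi_\alpha$.
\begin{enumerate}
\item From $\varphi_\alpha(\xi^{js})=\sigma^j(\alpha)\xi^{js}$ and $-bs\equiv 1 \pmod r$ (which follows from $ar-bs=1$), I get $\varphi_\alpha(\xi^i)=\sigma^{-bi}(\alpha)\xi^i$. This is well defined because $\alpha$ is $\sigma^r$-fixed.
\item Comparing $\varphi_\alpha\Psi(\xi^i)=\sigma^{-b(i+1)}(\alpha)\xi^{i+1}$ with $\Psi\varphi_\alpha(\xi^i)=\sigma^{-bi}(\alpha)\xi^{i+1}$ gives $\Psi\varphi_\alpha=\varphi_{\sigma^b(\alpha)}\Psi$.
\item Also $\Psi^r=p$.
\end{enumerate}
Hence $\End(N^\lambda_k)\simeq(\Q_{p^r}/\Qp,\sigma^{b},p)$. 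Reversing the order of multiplication turns the rule $\Psi\alpha=\sigma^b(\alpha)\Psi$ into $\xi\alpha=\sigma^{-b}(\alpha)\xi$, which is exactly the defining relation of $K^\lambda$. Therefore $\End(N^\lambda_k)\simeq (K^\lambda)^{\opp}$. I will be careful about the convention for composition here, since a sign error at this step would flip the invariant.

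Finally, the invariant. We have $\inv(K^\lambda)=\lambda$, and passing to the opposite algebra negates the invariant, so $\inv\bigl((K^\lambda)^{\opp}\bigr)=-\lambda \bmod \Z$. Since $(r,s)=1$, the class $-s/r$ has exact order $r$ in $\Q/\Z$. By local class field theory the index of a central simple algebra over $\Qp$ equals the order of its invariant, so this algebra of degree $r$ has index $r$ and is a division algebra.

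If one prefers to avoid the local theory, I would argue directly with valuations instead. Every nonzero element of the algebra is $\sum_i\varphi_{\alpha_i}\Psi^i$, and one takes the term minimizing $v_p(\alpha_i)+i/r$. Distinct $i$ in $0\le i<r$ give distinct values of $i/r$ modulo $\Z$, so this minimal term is unique. It follows that every nonzero element is injective on $N^\lambda_k$, hence invertible since $N^\lambda_k$ is finite-dimensional.
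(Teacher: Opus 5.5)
Your proof is correct, but it identifies the endomorphism algebra by a more computational route than the paper.

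\textbf{What is shared.} Both arguments rest on the same observation: the solutions of $F^r x=p^s x$ in $L[\xi]$ are exactly $\bigoplus_i \Q_{p^r}\xi^i$. This is the paper's $N^\diamond=1\otimes K^\lambda$.

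\textbf{How the paper finishes.} The paper models $N^\lambda_k$ as $L\otimes_{\Q_{p^r}}K^\lambda$ and observes that on $N^\diamond$ the Frobenius acts as left multiplication by $\xi^s$. An endomorphism therefore commutes with left multiplication by $\Q_{p^r}$ and by $\xi^s$, which together generate $K^\lambda$. So it is a right multiplication, and $\End(N^\lambda_k)\simeq\End_{K^\lambda}(K^\lambda)=(K^\lambda)^{\opp}$. The opposite algebra appears structurally, with no commutation relations or exponent bookkeeping to verify.

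\textbf{How you finish.} Your cyclic-vector presentation and explicit generators $\varphi_\alpha,\Psi$ require that bookkeeping. You carry it out correctly: $\Psi\varphi_\alpha=\varphi_{\sigma^b(\alpha)}\Psi$, and $\sigma^b$ generates the Galois group because $ar-bs=1$ forces $\gcd(b,r)=1$.

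\textbf{What your route buys.} You get an explicit cyclic presentation. Replacing $\Psi$ by $\Psi^{-s}$ even gives $(\Q_{p^r}/\Qp,\sigma,p^{-s})$, since $-bs\equiv 1 \pmod r$. The invariant $-\lambda$ can then be read off the paper's definition directly, without invoking $\inv(A^{\opp})=-\inv(A)$.

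\textbf{The division property.} The paper's proof does not argue this at all. It is left implicit from the invariant having exact order $r$, together with index $=$ period over a local field. Your valuation argument gives an elementary proof of it. It uses that $v_p(\alpha_i)\in\Z$ for $\alpha_i\in\Q_{p^r}$, that $\varphi_\alpha$ shifts the $\xi$-adic valuation by $v_p(\alpha)$, and that $\Psi$ shifts it by $1/r$.
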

\begin{proof} 
    Put $N:=L\otimes_{\Q_{p^r}} K^\lambda$, an $L$-vector space of dimension $r$ with basis $1\otimes \xi^i$, $i=0,\dots, r-1$, and define $F(b\otimes \xi^i):=\sigma(b)\otimes \xi^{i+s}.$ Then 
    \begin{itemize}
        \item[(i)] As $F$-spaces, one has 
        \[ (N,F)\simeq (L[\xi],F_s)\simeq N^\lambda_k.  \]
        \item[(ii)] $\End(N,F)$ is given by right multiplication by $K^\lambda$.
    \end{itemize}
    Statement (i) is clear and we show (ii). Let $f\in \End(N,F)$. Set 
    \[ N^\diamond:=\{x\in N: F^r(x)=p^s x\}=1\otimes K^\lambda,\]
    the subspace of constant sections of $p^{-s} F^r$. This is an $F$-space over $\F_{p^r}$. It is easy to check $f:N^\diamond\to N^\diamond$. As $N^\diamond=1\otimes K^\lambda$, $f$ is uniquely determined by its restriction $f|_{K^\lambda}$. On $K^\lambda$, $F$ acts as left multiplication by $\xi^s$ because
    \[ F(b \xi^i)=\sigma(b)\xi^{i+s}=\xi^s(b \xi^i), \quad b\in \Q_{p^r}. \]
    Therefore, $f$ commutes with left multiplication by $K^\lambda=\Q_{p^r}(\xi^s)$ and
    \[ \End(N^\diamond,F)=\End_{K^\lambda}(K^\lambda)=\{\text{right multiplication by } K^\lambda\}. \]
    This proves $\End(N,F)\simeq (K^\lambda)^\opp$. \qed
\end{proof}

\begin{remark}\label{rem:26} \ 

\begin{itemize}
    \item[(i)] $N^\lambda_k$ is effective if and only if $\lambda\ge 0$.
    \item[(ii)] $N^\lambda_k$ arises from a $p$-divisible group if and only if $0\le \lambda \le 1$.
    \item[(iii)] If $\lambda\neq \lambda'$, then $\Hom(N^\lambda_k, N^{\lambda'}_k)=0$.    
\end{itemize}
If $M\subset N^\lambda_k$ is an $F$-lattice and let $\{p^{a_1}, \dots, p^{a_r}\}$ be the the elementary divisors of the $W$-sublattice $FM$ in $M$, then $a_i\ge 0$ and $\sum_i a_i=s$. This shows $\lambda=s/r \ge 0$. If $pM \subset FM\subset M$, then $0\le a_i\le 1$ and $0\le \lambda \le 1$. Conversely, if $0\le \lambda \le 1$, then $W[F,V]/(F^r-V^s)$ is a \dieu module in $N^\lambda_k$. This shows (i) and (ii).

To see (iii), let $f\in \Hom(N^\lambda_k,N^{\lambda'}_k)$. Since $N^\lambda_k=L\otimes N^\lambda$, it suffices to show $f(N^\lambda)=0$. Write $\lambda=s/r$ and $\lambda'=s'/r'$. For any $x\in N^\lambda$, put $y=f(x)$. As $F^{rr'} x=p^{sr'}x$, one has $F^{rr'}y=p^{sr'} y$. 
Write $y=\sum_i a_i \xi^i$ with $a_i\in L$. It follows from $F^{rr'}y=p^{sr'} y$ and $F^{r'} \xi^i=p^{s'} \xi^i$ that
\[ p^{sr'} a_i=\sigma^{rr'}(a_i) p^{rs'} \] 
for all $i$. As $sr'\neq rs'$, one has $a_i=0$ for all $i$, and $y=0$.   
\end{remark} 

\subsection{The Manin-\dieu Theorem}\label{sec:4.3}

\begin{thm} \label{thm:manin} Assume that $k$ is algebraically closed.  

{\rm (1)} The category of $F$-spaces over $k$ is semi-simple. 

{\rm (2)} Every simple object is isomorphic to $N^\lambda_k$ for some $\lambda\in\Q$.

{\rm (3)} Let $N$ be an $F$-space over $k$. Then there are rational numbers $\lambda_i$ and positive integers $m_i$ for $i=1,\dots h$, and there is an isomorphism 
\[ N\simeq \bigoplus_{i=1}^h (N^{\lambda_i}_k)^{m_i} \]
of $F$-spaces. Moreover, the set $\{(\lambda_i,m_i)\}_{1\le i\le h}$
is uniquely determined by $N$.   
\end{thm}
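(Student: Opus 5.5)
The plan is to follow Demazure. The core step is: every nonzero $F$-space $N$ over $k$ algebraically closed contains a sub-$F$-space isomorphic to some $N^\lambda_k$. Semisimplicity, the classification of simples and the uniqueness in (3) will then follow from Remark~\ref{rem:26}(iii) and Theorem~\ref{thm:endo}.

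\emph{Step 1: existence of a slope-isoclinic vector.} Replace $N$ by a twist $N(-n)$ so that it is effective, and choose an $F$-lattice $M\subset N$. Take the smallest $\lambda=s/r\ge 0$ such that some nonzero $x\in N$ satisfies $F^r x=p^s x$; by Theorem~\ref{thm:endo} the span of $x,Fx,\dots,F^{r-1}x$ is then a copy of $N^\lambda_k$. To produce such a vector, I consider the lattices $M_j:=\sum_{i\le j}F^iM$ (or the Newton/Hodge data of $F$ on $M$). The simpler route is Dieudonn\'e's lemma: for $\lambda$ the smallest slope, find $r,s$ and a lattice $M'$ with $F^rM'\subseteq p^sM'$, so that $\phi:=p^{-s}F^r$ is a $\sigma^r$-linear endomorphism of $M'$. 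Then show that some nonzero $\phi$-stable quotient $M'/pM'$ has $\phi$ bijective. Because $k$ is algebraically closed, a $\sigma^r$-linear bijection on a finite-dimensional $k$-space has a basis of fixed vectors (Lang's theorem / solving $x^{p^r}=ax$). Successive approximation mod $p^m$, as in the proof of Lemma~\ref{lm:elliptic}, then lifts a fixed vector to $x\in M'$ with $F^rx=p^sx$.

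Choosing $\lambda$ minimal ensures $\phi$ is not topologically nilpotent on $M'$. I expect this step to be the main obstacle: one must pick $r,s$ from the Newton polygon of the characteristic polynomial of $F$ on some lattice (e.g.\ using that $\det$ has valuation giving total slope) and prove that a suitable lattice $M'$ exists. My first attempt is to pass to the ramified extension $L[p^{1/r}]$, twist so that the minimal slope becomes $0$, and show that the \'etale part $\bigcap_m F^m M'$ is nonzero. This is exactly the argument in Lemma~\ref{lm:elliptic} case (a).

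\emph{Step 2: splitting.} Once simple subobjects $N^\lambda_k\subseteq N$ exist, prove (1) by showing every subobject has a complement. Let $N_\lambda$ be the sum of all images of $N^\lambda_k$. By Remark~\ref{rem:26}(iii) these isotypic parts for distinct $\lambda$ are independent. Using Step 1 applied to $N/\bigoplus_\lambda N_\lambda$ together with an extension computation, show $N=\bigoplus_\lambda N_\lambda$. The extension computation is that $\mathrm{Ext}^1(N^\mu_k,N^\lambda_k)=0$, which amounts to solving $F$-equations $y-p^{-c}F^r y=z$; these are solvable by convergent series when the slopes differ and by Lang-type surjectivity of $x\mapsto x-\phi(x)$ on $W$ when they coincide. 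Hence every $F$-space is a sum of copies of the $N^\lambda_k$, which gives (1) and (2); $N^\lambda_k$ is simple since its endomorphism ring is a division algebra (Theorem~\ref{thm:endo}).

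\emph{Step 3: uniqueness in (3).} $\lambda_i$ and $m_i$ are recovered intrinsically: by Remark~\ref{rem:26}(iii) the isotypic components $N_\lambda$ are canonical, and $m_\lambda=\dim_L N_\lambda/r$. Alternatively, use Krull--Schmidt in a semisimple category.
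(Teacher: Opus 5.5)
You have a genuine gap in Step~1. Steps~2 and~3 are fine and parallel the paper: the Ext-vanishing you describe is Lemma~\ref{lm:4.1} (resting on Lemma~\ref{lm:4.0}), and reading off $m_\lambda$ from the canonical isotypic parts is equivalent to the paper's count via $\Hom(N^\lambda_k,N)$. Step~1, however, carries essentially all the content of the theorem, and you do not prove it.

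As written it is circular. ``The smallest $\lambda=s/r$ such that some nonzero $x$ satisfies $F^rx=p^sx$'' and ``$\lambda$ the smallest slope'' presuppose that such vectors exist and that slopes are defined. Neither is available before the decomposition is established. What your argument actually needs is coprime $r,s$ and a lattice $M'$ (possibly over $W[p^{1/r}]$) such that $p^{-s}F^r$ (or $p^{-s/r}F$) maps $M'$ into itself \emph{and} is not nilpotent modulo the uniformizer. Only then do Lang's theorem and the lifting argument of case~(a) in Lemma~\ref{lm:elliptic} apply. In that lemma the non-nilpotence came for free from the case hypothesis $F\ol M\neq V\ol M$. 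In general, producing such $(r,s,M')$ is exactly the content of Dieudonn\'e's lemma (Katz's slope estimates), and you leave it open. The handle you suggest does not work: $F$ is only $\sigma$-linear, so it has no characteristic polynomial on a lattice, and the valuation of $\det$ of a matrix of $F$ recovers only the sum of the slopes, not the smallest one.

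The missing idea is to reduce to a cyclic $F$-space, where a twisted polynomial supplies the Newton polygon.
\begin{itemize}
\item Since $R=L[F]$ is left Euclidean, a simple quotient of $N$ (the paper's choice) has the form $R/RP$ with $P=F^n+a_1F^{n-1}+\cdots+a_n$. In your sub-object formulation, the cyclic subobject $Rx$ for any $x\neq0$ works the same way. After a twist, all $a_i\in W$.
\item Put $\lambda=s/r:=\min_i v(a_i)/i$. Then $\alpha_i:=p^{-is/r}a_i\in W[p^{1/r}]$, and some $\alpha_i$ is a unit.
\item The paper factors $P=Q(F-p^{s/r})u$ over $W[p^{1/r}]$ by solving $v^{(n)}+\alpha_1v^{(n-1)}+\cdots+\alpha_nv=0$ for a unit $v$ by successive approximation (Lemma~\ref{lm:4.2}). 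This yields a nonzero map $N\to N^\lambda_k$ (Lemma~\ref{lm:4.3}).
\end{itemize}

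In your framework the same input produces exactly the lattice you were missing. Let $G=p^{-s/r}F$ and let $e$ be the class of $1$.
\begin{itemize}
\item The $W[p^{1/r}]$-span of $e,Ge,\dots,G^{n-1}e$ in $L[p^{1/r}]\otimes_L R/RP$ is $G$-stable, because $G^ne=-\sum_i\alpha_iG^{n-i}e$.
\item $G$ is not nilpotent modulo $p^{1/r}$, because some $\bar\alpha_i\neq0$.
\item Your Lang-plus-lifting step then gives $y\neq0$ with $Fy=p^{s/r}y$.
\item Write $y=\sum_{i<r}p^{i/r}y_i$ with $y_i\in N$. Using $\gcd(r,s)=1$, one gets $F^ry_i=p^sy_i$ for every $i$, and some $y_i\neq0$.
\end{itemize}

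Two smaller points. First, that $x,Fx,\dots,F^{r-1}x$ span a copy of $N^\lambda_k$ follows from simplicity of $N^\lambda_k$, or from a length count using $\gcd(r,s)=1$, not from Theorem~\ref{thm:endo}. Second, a division endomorphism ring gives only indecomposability. Simplicity of $N^\lambda_k$ should be derived as in Lemma~\ref{lm:4.4}, from Step~1, Remark~\ref{rem:26}(iii) and a dimension count.
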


The proof consists of the following 5 lemmas.

\begin{lemma}\label{lm:4.0}
  For any integers $\alpha, \beta$, the map
  \[ \varphi: L \to L, \ x\mapsto p^\beta x^{(\alpha)} -x \]
  is surjective, where we write $x^{(n)}:=\sigma^n(x)$.
\end{lemma}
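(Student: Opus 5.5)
The plan is to split according to the sign of $\beta$. When $\beta\neq 0$, the operator $x\mapsto p^\beta x^{(\alpha)}$ (or its inverse) is $p$-adically contracting, so a geometric series solves the equation. When $\beta=0$, the equation becomes an Artin--Schreier type equation, which I will solve mod $p$ using that $k$ is algebraically closed and then lift by successive approximation.

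I should flag one degenerate case first. If $\alpha=\beta=0$ the map $\varphi$ is identically zero, so I will assume $(\alpha,\beta)\neq(0,0)$, which is presumably the intended hypothesis. If $\alpha=0$ and $\beta\neq 0$, then $\varphi$ is multiplication by $p^\beta-1\neq 0$ and is trivially bijective.

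\emph{Case $\beta>0$.} Given $y\in L$, I set
\[ x:=-\sum_{n\ge 0} p^{n\beta}\sigma^{n\alpha}(y). \]
This converges in $L$ because $\sigma$ preserves the valuation and $p^{n\beta}\to 0$. Applying $p^\beta\sigma^\alpha$ shifts the series, so $p^\beta x^{(\alpha)}=x+y$, that is, $\varphi(x)=y$.

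\emph{Case $\beta<0$.} I substitute $z:=x^{(\alpha)}$, so that $x=\sigma^{-\alpha}(z)$. The equation $p^\beta x^{(\alpha)}-x=y$ becomes
\[ p^{-\beta}\sigma^{-\alpha}(z)-z=-p^{-\beta}y. \]
This has exactly the shape of the previous case with the data $(-\alpha,-\beta)$ and $-\beta>0$. So $z$, and hence $x$, exists by the same series.

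\emph{Case $\beta=0$, $\alpha\neq 0$.} This is the main step, and the only place where algebraic closedness of $k$ is used. The task is to show that $x\mapsto \sigma^\alpha(x)-x$ is surjective on $L$.
\begin{itemize}
\item Since $\sigma$ commutes with multiplication by $p$, it suffices to treat $y\in W$; a general $y$ is $p^{-m}y'$ with $y'\in W$, and we can rescale.
\item Replacing $\alpha$ by $-\alpha$ and $x$ by $-\sigma^{-\alpha}(x)$ if needed, I may assume $\alpha>0$.
\item Mod $p$, the equation reads $a^{p^\alpha}-a=\bar c$ for $a,\bar c\in k$. This polynomial is separable of degree $p^\alpha$, so it has a root in $k$ because $k$ is algebraically closed.
\item Take any lift $x_0\in W$ of such a root for $\bar c=\bar y$. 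Because $\sigma$ reduces to Frobenius mod $p$, we get $\sigma^\alpha(x_0)-x_0-y=py_1$ for some $y_1\in W$.
\item Repeat with $-y_1$ in place of $y$ to get $x_1$, and continue. The sum $x:=\sum_n p^n x_n$ converges in $W$ and satisfies $\sigma^\alpha(x)-x=y$ by additivity and continuity of $\sigma^\alpha-1$.
\end{itemize}

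I expect this last case to be the real obstacle. The delicate point is checking that the approximation scheme closes up: the error at each stage must be exactly divisible by one more power of $p$, which follows from $\sigma(p^n x)=p^n\sigma(x)$ together with the mod-$p$ solvability.
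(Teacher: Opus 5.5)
Your proof is correct and follows the paper's argument: a geometric series for $\beta>0$, a substitution that reduces $\beta<0$ to that case, and successive approximation mod powers of $p$ for $\beta=0$, which uses that $k$ is algebraically closed. You are also right that the statement fails for $\alpha=\beta=0$, where $\varphi\equiv 0$; the paper's $\beta=0$ step tacitly needs $\alpha\neq 0$, which is harmless since Lemma~\ref{lm:4.1} only uses the lemma with $\alpha=rr'\geq 1$.
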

\begin{proof}
   We need to solve the equation ($*$) $p^\beta x^{(\alpha)} -x =b$ for each $b\in L$. 

   Suppose $\beta>0$. Take
   \[ x=-(b+ p^\beta b^{(\alpha)} + p^{2\beta} b^{(2\alpha)}+\cdots) .\]
   We find $x-p^\beta x^{(\alpha)}=-b$ and thus equation $(*)$ is solvable.

   Suppose that $\beta<0$. Put $x':=p^\beta x^{(\alpha)}$. Then $x=p^{-\beta} x'^{(-\alpha)}$ and 
   \[ x'- p^{-\beta} x'^{(-\alpha)}=p^\beta x^{(\alpha)}-x=b.\]
   As $-\beta>0$, this equation is solvable for $x'$ and so as equation $(*)$.

   Suppose that $\beta=0$. We solve the equation by successive approximation. We may assume $b\in W$. Suppose $x^{(\alpha)}-x-b =p^m a_m$ for some $a_m\in W$. Put $x'=x+p^m y$ and we have 
   \[ x'^{(\alpha)}-x'-b =p^m (a_m+y^{(\alpha)}-y). \]
   
   Since $k$ is algebraically closed, there exists $y\in W$ such that \[ a_m+y^{(\alpha)}-y\!\!\! \mod {pW} = \bar a_m+\bar y^{p^\alpha}-\bar y=0, \] 
   where $\bar y$ denotes the image of $y$ in $k$.
   Replacing $x$ by $x'$, one has $x^{(\alpha)}-x-b =p^{m+1} a_{m+1}$ for some $a_{m+1}\in W$. As $W$ is complete, the equation $x^{(\alpha)}-x-b=0$ is solvable. \qed 
\end{proof}

\begin{lemma}\label{lm:4.1}
   Any short exact sequence of $F$-spaces of the form
   \[
   \begin{tikzcd}
       0 \arrow[r] & N' \arrow[r] & N \arrow[r,"\varphi"] & N_k^{\lambda} \arrow[r] & 0
   \end{tikzcd} \]
   where $\lambda\in \Q$ and $N'=\bigoplus_{\lambda'} (N_k^{\lambda'})^{m_{\lambda'}}$ is a direct sum of $N_k^{\lambda'}$, splits. 
\end{lemma}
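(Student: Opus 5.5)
To split the sequence it suffices to find a morphism of $F$-spaces $s: N^\lambda_k\to N$ with $\varphi\circ s=\id$. I would first reduce to the case $N'=N^{\lambda'}_k$ is a single simple summand. An extension of $N^\lambda_k$ by a finite direct sum $\bigoplus_j N'_j$ is the direct sum of its push-outs along the projections $N'\to N'_j$, and it splits if and only if each push-out splits. So fix $\lambda=s/r$ and $\lambda'=s'/r'$ with $(r,s)=(r',s')=1$.

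Next I would take the standard presentation of $N^\lambda_k$: it is generated over $L$ by $e_1=1$, with $F^{i-1}e_1=e_i$ for $i\le r$ and $F^r e_1=p^s e_1$. More precisely, $N^\lambda_k\simeq L[F]/(F^r-p^s)$ as a left module over the twisted polynomial ring $L[F]$, where $Fa=\sigma(a)F$. Hence an $F$-linear section $s$ is determined by choosing $y:=s(e_1)\in N$ with $\varphi(y)=e_1$ and $F^r y=p^s y$. The map $s$ is then defined by $s(e_i)=F^{i-1}y$; it is $\sigma$-semilinear-compatible and automatically an $L$-linear map commuting with $F$.

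To find such a $y$, pick any lift $y_0\in N$ of $e_1$. Then $F^r y_0-p^s y_0=:z$ lies in $N'$, and we seek $x\in N'$ with
\[ (F^r-p^s)(y_0+x)=0,\quad\text{i.e.}\quad p^s x-F^r x = z. \]
So everything reduces to showing that the additive map $\psi:=F^r-p^s: N'\to N'$ is surjective when $N'=N^{\lambda'}_k$. This is the main step, and I would attack it with Lemma~\ref{lm:4.0}. Use the basis $\xi^i$ of $N^{\lambda'}_k\simeq(L[\xi'],F_{s'})$, where $\xi'^{r'}=p$ and $F(a\xi'^i)=\sigma(a)\xi'^{i+s'}$. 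Then $F^r(a\xi'^i)=\sigma^r(a)\xi'^{i+rs'}$, which is a permutation of basis indices modulo $r'$ together with a power of $p$.

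It is cleaner to replace $F^r$ by $F^{rr'}$. We have $F^{rr'}-p^{sr'}=(F^r-p^s)\circ Q$ for a polynomial $Q$ in $F^r$ with commuting coefficients, namely $Q=\sum_j p^{sj}F^{r(r'-1-j)}$. So surjectivity of $F^{rr'}-p^{sr'}$ implies surjectivity of $F^r-p^s$. On $N^{\lambda'}_k$ we have $F^{rr'}(a\xi'^i)=p^{rs'}\sigma^{rr'}(a)\xi'^i$, so the operator is diagonal in the basis. Coordinatewise it becomes $a\mapsto p^{rs'}\sigma^{rr'}(a)-p^{sr'}a = p^{sr'}\bigl(p^{rs'-sr'}a^{(rr')}-a\bigr)$. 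This is surjective on $L$ by Lemma~\ref{lm:4.0}, applied with $\alpha=rr'$ and $\beta=rs'-sr'$; this works whether or not $\lambda=\lambda'$, the case $\beta=0$ being exactly where algebraic closedness of $k$ is used.

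The only delicate points are checking the factorization through $F^{rr'}$, which holds because $F^r$ commutes with the scalars $p^s$, and checking that the map $s$ so defined is well defined, which follows from the presentation $L[F]/(F^r-p^s)$.
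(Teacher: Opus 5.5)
Your proposal is correct and is essentially the paper's proof: lift $e_1$ to $N$, correct the lift by an element of $N'$ using surjectivity of $F^r-p^s$ on $N'$, and get that surjectivity from the factorization $F^{rr'}-p^{sr'}=(F^r-p^s)\circ Q$ together with Lemma~\ref{lm:4.0} applied with $\alpha=rr'$, $\beta=rs'-sr'$. The differences are only cosmetic: the paper first twists so that all slopes are positive, which is unnecessary since Lemma~\ref{lm:4.0} allows any integer $\beta$; and the paper skips your reduction to a single summand, which is also unnecessary because $F^r-p^s$ preserves the decomposition of $N'$ (and, strictly, an extension is the fibre product over $N^\lambda_k$ of its push-outs, not their direct sum).
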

\begin{proof}
    Replacing the exact sequence by a suitable twisting, we may assume that $\lambda=s/r$ and all $\lambda'=s'/r'$ are positive. 

    We first show that the map 
    \begin{equation}\label{eq:4.1}
        F^r-p^s: N_k^{\lambda'} \to N_k^{\lambda'}
    \end{equation}
    is surjective. Since $F$ commutes with $p$, one has
    \[ F^{rr'}-p^{sr'}=(F^r-p^s)(F^{r(r'-1)}+F^{r(r'-2)}p^{s}+\dots+p^{s(r'-1)}). \]
    If the map $F^{rr'}-p^{sr'}$ is surjective, then so is the map $F^r-p^s$. Thus, it suffices to show that the map $F^{rr'}-p^{sr'}:N_k^{\lambda'} \to N_k^{\lambda'}$ is surjective. Let $e_1',\dots , e_{r'}'$ be an $L$-basis for $N_k^{\lambda'}$, and one has $F^{rr'}e_i'=p^{rs'}e_i'$. One computes
    \[ (F^{rr'}-p^{sr'})\left(\sum_i a_ie'_i\right )=\sum_i (a_i^{(rr')}p^{rs'}-p^{sr'}a_i) e_i',\]
    where $a_i\in L$. Set $x=p^{sr'}a_i$, then we have 
    \[ p^{rs'-sr'} x^{(rr')}-x=a_i^{(rr')}p^{rs'}-p^{sr'}a_i.\]
    Since the equation $p^{rs'-sr'} x^{(rr')}-x=b_i$ is solvable for any $b_i\in L$ by Lemma~\ref{lm:4.0},  there exists $a_i$ such that $a_i^{(rr')}p^{rs'}-p^{sr'}a_i=b_i$. Thus, the map $F^r-p^s$ is surjective. 

    To show the short exact sequence splits, we need to construct an element $x\in N$ such that $(F^r-p^s)(x)=0$. Then the map $N_k^\lambda \to N$, $e_1 \mapsto x$ gives a section of $\varphi$. 
    
    Choose an element $x\in N$ such that $\varphi(x)=e_1$. Since $(F^r-p^s)(e_1)=0$, one has $\varphi((F^r-p^s)(x))=0$ and $(F^r-p^s)(x)\in N'$. As the map $F^r-p^s$ in \eqref{eq:4.1} is surjective, there exists an element $x'\in N'$ such that $(F^r-p^s)(x')=(F^r-p^s)(x)$.  Replacing $x$ by $x-x'$, one has $(F^r-p^s)(x)=0$. This completes the proof of the lemma. \qed    
\end{proof}

Let $R:=L[F]$ denote the non-commutative polynomial ring over $L$ with relation $F a=\sigma(a) F$ for $a\in L$, and $W[F]$ be its subring with coefficients in $W$. Let $W[p^{1/r}][F]$, for $r\ge 1$, denote the non-commutative polynomial ring over $W[p^{1/r}]$ with relations $F a=\sigma(a) F$ for $a\in W$ and $F p^{1/r}=p^{1/r} F$.

\begin{lemma} \label{lm:4.2}
   Let $F^n+a_1 F^{n-1}+\dots +a_n\in W[F]$ with $n\ge 1$. Then there exist co-prime integers $r\ge 1$ and $s$, and elements $b_0,\dots ,b_{n-1}, u\in W[p^{1/r}]$ with $u$ invertible such that 
   \begin{equation}\label{eq:4.2}
       F^n+a_1 F^{n-1}+\dots +a_n=(b_0 F^{n-1}+\dots +b_{n-1})(F-p^{s/r}) u 
   \end{equation}
   in $W[p^{1/r}][F]$.
\end{lemma}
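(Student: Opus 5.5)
The plan is to take $s/r$ to be the smallest slope of the Newton polygon of the polynomial. Rescaling $F$ by $p^{s/r}$ then turns the problem into one about a polynomial with a unit coefficient. That problem reduces to finding a unit solution of a $\sigma$-semilinear equation over $W':=W[\pi]$, $\pi:=p^{1/r}$, which I will solve modulo $\pi$ using that $k$ is algebraically closed (as assumed in Subsection~\ref{sec:4.3}) and then lift by successive approximation, as in Lemma~\ref{lm:4.0}.

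\emph{Choice of slope and normalisation.} Let $v$ be the $p$-adic valuation. Put
\[ \mu:=\min\{ v(a_i)/i : 1\le i\le n,\ a_i\neq 0\}, \]
and write $\mu=s/r$ in lowest terms with $r\ge 1$. Since $a_i\in W$, we have $s\ge 0$. If all $a_i=0$, the polynomial is just $F^n$. This degenerate case must be excluded or treated separately; I expect the statement tacitly assumes some $a_i\ne 0$, since $F-p^{s/r}$ can never be a right factor of $F^n$.

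In $W'[F]$ the element $\pi$ is central, so we may set $F=\pi^s G$ in $L[\pi][F]$, where $G$ again satisfies $Gx=\sigma(x)G$. Then
\[ F^n+\sum_i a_iF^{n-i}=\pi^{sn}\,P(G),\qquad P(G):=G^n+\sum_i c_iG^{n-i},\quad c_i:=a_i\pi^{-si}. \]
By the choice of $\mu$, every $c_i$ lies in $W'$ and at least one $c_i$ with $i\ge1$ is a unit. Since $F-p^{s/r}=\pi^s(G-1)$, it suffices to find a unit $u\in W'$ and some $Q\in W'[G]$ with $P(G)=Q(G)(G-1)u$. The $b_j$ are then read off from $\pi^{s(n-1)}Q(\pi^{-s}F)$: its coefficient of $F^j$ is $\pi^{s(n-1-j)}q_j$, which lies in $W'$ because $s\ge0$.

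\emph{Reduction to a $\sigma$-equation.} We have $(G-1)u=\sigma(u)(G-c)$ with $c:=u/\sigma(u)$. Right division by the monic element $G-c$ is always possible in $W'[G]$, and the remainder of $G^j$ is
\[ c\,\sigma(c)\cdots\sigma^{j-1}(c)=u/\sigma^j(u). \]
So right divisibility is equivalent to the vanishing of the total remainder, $\sum_{j=0}^n c_{n-j}\,u\,\sigma^j(u)^{-1}=0$ with $c_0=1$. Writing $w=u^{-1}$, this becomes
\[ \Phi(w):=\sum_{j=0}^{n} c_{n-j}\,\sigma^j(w)=0,\qquad w\in W'^\times. \]

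\emph{Solving and lifting (the main step).} Note that $W'/\pi W'=k$ and $\sigma$ fixes $\pi$.

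Modulo $\pi$, the equation becomes the additive polynomial $\bar\Phi(w)=\sum_j\bar c_{n-j}w^{p^j}$. Let $j_0<n$ be the least $j$ with $\bar c_{n-j}\ne0$; it exists because some $c_i$ with $i\ge1$ is a unit. In the variable $z=w^{p^{j_0}}$, $\bar\Phi$ is separable of degree $p^{n-j_0}>1$, so it has a nonzero root $\bar w_0\in k$. Any lift $w_0$ of $\bar w_0$ is a unit.

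Now suppose $\Phi(w_m)=\pi^m e$ for some $e\in W'$. Then
\[ \Phi(w_m+\pi^m\delta)=\pi^m\bigl(e+\Phi(\delta)\bigr). \]
It therefore suffices that $\bar\Phi:k\to k$ is surjective. This holds because for each $\bar e\in k$, the equation $\bar\Phi(\delta)=-\bar e$ is a nonconstant polynomial equation over the algebraically closed field $k$. Completeness of $W'$ then gives an exact solution $w\equiv w_0\pmod\pi$, which is still a unit. Setting $u=w^{-1}$ yields the required factorisation.

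I expect the main obstacle to be this lifting step, specifically checking that the linearised equation is surjective at every stage. That is exactly where both the choice of the minimal slope (which guarantees a unit lower coefficient and hence a separable, surjective $\bar\Phi$) and the algebraic closedness of $k$ enter.
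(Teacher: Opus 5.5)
Your proposal is correct and is essentially the paper's argument. The paper also takes $s/r$ to be the minimal slope and writes $a_i=p^{is/r}\alpha_i$. Comparing coefficients, with $b_i=p^{is/r}\beta_i$ and $v=u^{-1}$, it reduces to the same equation $v^{(n)}+\alpha_1v^{(n-1)}+\dots+\alpha_nv=0$ for a unit $v\in W[p^{1/r}]$, and solves it by successive approximation modulo $p^{1/r}$; your substitution $F=\pi^sG$ and right-remainder computation just repackage that coefficient comparison. Your remark that the case $a_1=\dots=a_n=0$ must be excluded is correct: the paper passes over it silently, but in the application in Lemma~\ref{lm:4.3} one has $a_n\neq 0$ because $F$ is bijective on $R/RP$.
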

\begin{proof}
    Let $\lambda:=\min \{\frac{v(a_1)}{1},\dots \frac{v(a_n)}{n} \}$. Then $i\cdot \lambda\le v(a_i)$ for all integers $1\le i \le n$, and $i\cdot \lambda=v(a_i)$ for some $i$. Write $\lambda=s/r$ and $a_i= p^{is/r} \alpha_i$ for each $i$, then $\alpha_i\in W[p^{1/r}]$ and $\alpha_i$ is a unit for some $i$. Put $b_i=p^{is/r} \beta_i$, $1\le i \le n$, and $v=u^{-1}$. Multiplying $v$ by the left hand side of \eqref{eq:4.2}, one has
    \[ (F^n+a_1 F^{n-1}+\dots +a_n)v=v^{(n)} F^n+v^{(n-1)} a_1 F^{n-1} +\dots+v a_n. \]
    The corresponding right hand side is  
    \[ b_0 F^n+(b_1-p^{s/r} b_0)F^{n-1}+(b_2-p^{s/r} b_1) F^{n-2}+ \dots + (b_{n-1}-p^{s/r} b_{n-2})F-b_{n-1} p^{s/r}. \]
    Comparing the coefficients, one obtains
    \[ v^{(n)}=b_0, \quad v^{(n-1)} a_1=b_1-p^{s/r} b_0,\dots, \]
    \[ v^{(1)} a_{n-1}=b_{n-1}-p^{s/r} b_{n-2}, \quad va_n=-p^{s/r} b_{n-1},\]
    or equivalently,
    \begin{equation} \label{eq:4.21}   
     \begin{split}
        v^{(n)} &=\beta_0,\\
        \alpha_1 v^{(n-1)}& =\beta_1-\beta_0,\\
           & \vdots \\
           \alpha_{n-1} v^{(1)}&=\beta_{n-1}-\beta_{n-2}\\
           \alpha_n v&=-\beta_{n-1}.
     \end{split}
     \end{equation}
Thus, Equations~\eqref{eq:4.21} are solvable if and only there exists a unit $v\in W[p^{1/r}]$ such that 
\begin{equation}\label{eq:4.22}
   P(v):=v^{(n)}+ \alpha_1 v^{(n-1)} +\dots+\alpha_{n-1}v^{(1)}+\alpha_n v=0. 
\end{equation}
We solve Equation~\eqref{eq:4.22} by successive approximation. Clearly, the equation 
$P(\bar v)=\bar v^{p^n}+\bar \alpha_1 \bar v^{p^{n-1}}+\dots +\bar \alpha_n \bar v=0$
has a nonzero solution $\bar v$ in $k$ as $\bar \alpha_i\neq 0$ for some $i$. Thus, one has $P(v_1)\equiv 0 \mod p^{1/r}$, where $v_1$ is a lift of $\bar v$.

Suppose we have $P(v_k)=z p^{k/r}$ for some unit $v_k\in W[p^{1/r}]$.
Put $v_{k+1}=v_k+p^{k/r}x$, where $x\in W[p^{1/r}]$. We need to solve
\[ P(v_{k+1})=p^{k/r}(P(x)+z) \equiv 0 \mod p^{(k+1)/r}, \]
or equivalently to solve the equation 
\[ \bar x^{p^n}+\bar \alpha_1 \bar x^{p^{n-1}}+\dots +\bar \alpha_n \bar x +\bar z=0\]
in $k$, which is solvable. Since $W[p^{1/r}]$ is complete, the equation $P(v)=0$ has a unit solution. \qed
\end{proof}

\begin{lemma}\label{lm:4.3}
   Let $N$ be a nonzero $F$-space. Then there exist a number $\lambda\in \Q$ and a nonzero map $N\to N^\lambda_k$. 
\end{lemma}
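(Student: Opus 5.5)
The plan is to reduce to a cyclic module $R/RP$ and read off the quotient $N^\lambda_k$ from the factorization of Lemma~\ref{lm:4.2}.

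\emph{Reduction to a simple cyclic $F$-space.} Since $N$ is finite-dimensional over $L$, choose a proper sub-$F$-space $N_1\subsetneq N$ of maximal dimension; $N_1=0$ is allowed, and subobjects of $F$-spaces are $F$-stable $L$-subspaces. Then $Q:=N/N_1$ is a nonzero simple $F$-space. It therefore suffices to find a nonzero map $Q\to N^\lambda_k$; composing with $N\onto Q$ gives the claim.

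Pick $0\neq x\in Q$. By simplicity $Q=Rx$, where $R=L[F]$ acts on $Q$. Let $n\ge1$ be minimal with $F^n x\in\langle x,\dots,F^{n-1}x\rangle_L$. This gives a monic $P=F^n+a_1F^{n-1}+\dots+a_n$ with $Q\simeq R/RP$; the left ideal of relations is $RP$ by the usual division algorithm in $R$.

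\emph{Integrality by twisting.} The coefficients $a_i$ need not lie in $W$. Replace $Q$ by $Q(-m)$ for $m$ large, i.e.\ $F$ by $p^mF$. The relation becomes $F^n+p^{m}a_1F^{n-1}+\dots+p^{mn}a_n$, which has coefficients in $W$ for $m\gg0$. Since $N^\lambda_k(-m)\simeq N^{\lambda+m}_k$, a nonzero map $Q(-m)\to N^{\mu}_k$ twists back to a nonzero map $Q\to N^{\mu-m}_k$. So we may assume $P\in W[F]$.

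\emph{Producing the map from Lemma~\ref{lm:4.2}.} Write $P=(b_0F^{n-1}+\dots+b_{n-1})(F-p^{s/r})u$ in $W[p^{1/r}][F]$, with $u$ a unit, and put $\xi=p^{1/r}$ and $R':=L[\xi][F]$. Then $RP\subseteq R'(F-\xi^s)u$, so the $L$-linear, $F$-equivariant map
\[ R/RP\longrightarrow R'/R'(F-\xi^s)u,\qquad 1\mapsto 1,\]
is well defined. The target is $R'/R'(F-\xi^s)\cdot u$, which is a one-dimensional $L[\xi]$-vector space with generator $e=u\cdot 1$ satisfying $Fe=\xi^s e$. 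Viewed as an $L$-vector space with $\sigma$-linear $F$, it is exactly $(L[\xi],F_s)$, which the paper has identified with $N^\lambda_k$ for $\lambda=s/r$ (after reducing $s/r$ to lowest terms; here $\gcd(r,s)=1$ is given by the lemma). The image of $1$ is $u^{-1}e\neq0$, so the map is nonzero.

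The main point to check carefully is this last identification. One must verify that $R'/R'(F-\xi^s)u$ really is $L[\xi]$ with $F(a\xi^i)=\sigma(a)\xi^{i+s}$: right multiplication by the unit $u$ changes only the generator. One must also check that the $\sigma$-semilinear structure matches $(L[\xi],F_s)$, using $F\xi=\xi F$ in $R'$. Once this is settled, composing $N\onto Q\simeq R/RP\to N^\lambda_k$ gives the required nonzero map.
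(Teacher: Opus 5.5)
Your proposal is correct and follows the paper's proof essentially step for step: pass to a simple quotient $R/RP$, twist so that $P\in W[F]$, factor $P=Q(F-p^{s/r})u$ by Lemma~\ref{lm:4.2}, and send $1\mapsto 1$ into $R_r/R_r(F-p^{s/r})u\simeq N^{s/r}_k$, where the image $u^{-1}e$ is nonzero. The differences are only cosmetic: you absorb the paper's right multiplication by $u^{-1}$ into the choice of generator $e=[u]$, and you spell out two points the paper leaves implicit, namely the cyclic presentation of the simple quotient and the twist-back $N^\lambda_k(-m)\simeq N^{\lambda+m}_k$.
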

\begin{proof}
    Replacing $N$ by its simple quotient, we may assume that $N$ is a simple $R$-module. Since $R=L[F]$ is a non-commutative euclidean ring, $N\simeq R/RP$ for a monic polynomial $P\in R$. Replacing $N$ by a suitable $N(-m)$, we may write $P=F^{n}+a_1 F^{n-1}+\dots +a_n$ with $a_i\in W$ for all $i$. By Lemma~\ref{lm:4.2}, we have 
    \[ P=Q (F-p^{s/r}) u \quad \text{in}\ W[p^{1/r}][F] \]
    for co-prime integers $r\ge 1$, $s\ge 0$, a polynomial $Q\in W[p^{1/r}][F]$ and an invertible element $u\in W[p^{1/r}]^\times$. 
    Put 
    \[ R_r:=L[p^{1/r}][F]. \] 
    Observe that $R_r/R_r (F-p^{s/r})$ is an $L$-vector space with basis $1,p^{1/r}, \dots, p^{(r-1)/r}$ and $F(p^{i/r})=p^{(i+s)/r}$. Therefore, $R_r/R_r(F-p^{s/r})\simeq N^{s/r}_k$. Consider the maps
    \[ \begin{split}
        N=R/R\, P & \embed L[p^{1/r}]\otimes_{L} R/R\, P=R_r/R_r \, Q(F-p^{s/r})u  \\
        & \isoto R_r/R_r Q(F-p^{s/r}) \onto R_r/R_r (F-p^{s/r})=N^{s/r}_k,
    \end{split}
    \]
    where the isomorphism in the second row is the map $x\mapsto xu^{-1}$ and the last map is the natural projection. Then the composition gives a nonzero map $N \to N^{s/r}_k$ as $1 \mapsto u^{-1}\neq 0$. \qed   
\end{proof}

\begin{lemma}\label{lm:4.4}
   Every $F$-space $N^\lambda_k$ is simple. Conversely, every simple $F$-space $N$ is isomorphic to $N^\lambda_k$ for some $\lambda\in \Q$.
\end{lemma}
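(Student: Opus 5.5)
The plan has two halves: simplicity of $N^\lambda_k$, then the converse.

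For simplicity, take a nonzero sub-$F$-space $N'\subseteq N^\lambda_k$. A minimal-dimensional such $N'$ is simple, so by Lemma~\ref{lm:4.3} it admits a nonzero map $N'\to N^{\mu}_k$ for some $\mu$. I would rather avoid that detour and argue directly with lattices and valuations. Write $\lambda=s/r$ and use the model $(L[\xi],F_s)$ with $\xi=p^{1/r}$.

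Let $N'$ have dimension $d$ with $0<d\le r$, and pick a $W$-lattice $M'\subset N'$. Then $F^r=p^s$ on $N^\lambda_k$, hence also on $N'$. Taking determinants, $\det(F|_{N'})^{(r-1)}\cdots\det(F|_{N'})=p^{sd}$. Here $\det F|_{N'}$ means the element $c\in L$ with $F(\omega)=c\,\omega$ for a generator $\omega$ of $\wedge^d N'$. Since $\sigma$ preserves valuation, this gives $r\,v(c)=sd$. Because $(r,s)=1$, it follows that $r\mid d$, so $d=r$ and $N'=N^\lambda_k$. This uses $s\neq 0$. When $s=0$ we have $r=1$, and the statement is trivially true since the space is $1$-dimensional.

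For the converse, let $N$ be simple. By Lemma~\ref{lm:4.3} there is a nonzero map $f:N\to N^\lambda_k$. Its kernel is a proper sub-$F$-space of $N$, hence zero by simplicity. Its image is a nonzero sub-$F$-space of $N^\lambda_k$, hence all of $N^\lambda_k$ by the first part. So $f$ is an isomorphism.

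The only delicate point is the determinant step. I must make sure that the iterate $F^r$ acting on $\wedge^d N'$ gives $\sigma^{r-1}(c)\cdots\sigma(c)\,c$. I must also make sure that a sub-$F$-space, with $F$ bijective on it because $F$ is injective and $N'$ is finite-dimensional, has a well-defined determinant class up to $\sigma$-twisted units. Changing the generator $\omega$ to $a\omega$ multiplies $c$ by $\sigma(a)/a$, which has valuation $0$, so $v(c)$ is well defined and the valuation argument is unaffected.
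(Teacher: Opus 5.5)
Your converse half is the paper's argument, and the strategy for simplicity (a slope count forcing $r\mid d$) is viable. However, the step it rests on is false as written. On $N^\lambda_k$ the operator $F^r$ is $\sigma^r$-semilinear, not multiplication by $p^s$:
\[ F^r\bigl(\textstyle\sum_i a_ie_i\bigr)=p^s\textstyle\sum_i\sigma^r(a_i)e_i . \]
So $F^r=p^s$ holds only on the $\Q_{p^r}$-span of the $e_i$. This is the space $N^\diamond$ in the proof of Theorem~\ref{thm:endo}, and it is not an $L$-subspace, so nothing gives $F^r=p^s$ on $N'$. Consequently the identity $\sigma^{r-1}(c)\cdots\sigma(c)\,c=p^{sd}$ is false in general. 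Your own last paragraph shows this: replacing $\omega$ by $a\omega$ multiplies the left side by $\sigma^r(a)/a$, and this is $\neq 1$ for any $a\in W^\times$ whose residue is not in $\F_{p^r}$. That already happens for $N'=N^\lambda_k$. The point you flagged as delicate, the formula for $F^r$ on $\wedge^dN'$, is fine; the input you feed into it is not.

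The repair is short, because only the valuation of $c$ matters. Write $\omega=\sum_I a_Ie_I$ in the basis $e_I=e_{i_1}\wedge\cdots\wedge e_{i_d}$ ($i_1<\cdots<i_d$) of $\wedge^dN^\lambda_k$. Since $F^re_i=p^se_i$ and $\wedge^dF^r$ is $\sigma^r$-semilinear, $\wedge^dF^r(\omega)=p^{sd}\sum_I\sigma^r(a_I)e_I$. On the other hand this equals $\sigma^{r-1}(c)\cdots\sigma(c)\,c\,\omega$, because $\omega\in\wedge^dN'$. Comparing coefficients at any $I$ with $a_I\neq 0$ gives
\[ \sigma^{r-1}(c)\cdots c=p^{sd}\,\sigma^r(a_I)/a_I, \]
hence $r\,v(c)=sd$. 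The rest of your argument ($r\mid d$, so $d=r$) then goes through.

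With this fix, your proof of simplicity is genuinely different from the paper's. The paper applies Lemma~\ref{lm:4.3} to the quotient $N^\lambda_k/N'$ to get a nonzero $f\colon N^\lambda_k\to N^\mu_k$ vanishing on $N'$. It uses Remark~\ref{rem:26}(iii) to force $\mu=\lambda$, and then Theorem~\ref{thm:endo} ($\End(N^\lambda_k)$ is a division algebra) to make $f$ injective, so $N'=0$.

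Your determinant count is self-contained. It uses neither Lemma~\ref{lm:4.3} (so not the algebraic closedness of $k$ needed in Lemma~\ref{lm:4.2}) nor the computation of $\End(N^\lambda_k)$. It therefore proves simplicity of $N^\lambda_k$ over an arbitrary perfect $k$ directly, which subsumes the remark after the lemma. The paper's route instead reuses machinery it needs anyway for Theorem~\ref{thm:manin}.
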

\begin{proof}
    Suppose that $N\subsetneq N^\lambda_k$ is a proper $F$-subspace. 
    As $N^\lambda_k/N\neq 0$, by Lemma~\ref{lm:4.3}, there exist an element $\mu\in \Q$ and a commutative diagram of $F$-spaces
    \[ \begin{tikzcd}
        N^\lambda_k \arrow[d] \arrow[dr, "f"] & \\
        N^\lambda_k/N \arrow[r, "g"] & N^\mu_k
    \end{tikzcd}\]
    in which $g$ is a nonzero map. 
    Since $f$ is nonzero, one has $\mu=\lambda$ by Remark~\ref{rem:26} (iii), and by Theorem~\ref{thm:endo}, $f$ is an isomorphism. Thus $N=0$ and $N^\lambda_k$ is simple. 

    By Lemma~\ref{lm:4.3}, there exists a nonzero map $N\to N^\lambda_k$. Since both $N^\lambda_k$ and $N$ are simple, this map is an isomorphism. 
    \qed
\end{proof}

\begin{remark}
    The first part of Lemma~\ref{lm:4.4} holds true if $k$ is replaced by any perfect field $k$ containing $\F_{p^r}$. Indeed, if $0\neq N\subsetneq N^\lambda_k$ is a proper $F$-subspace over $k$, then 
    $0\neq N_{\bar k} \subsetneq N^\lambda_{\bar k}$ is a proper $F$-subspace over $\ol k$, a contradiction as $N^\lambda_{\bar k}$ is simple.
\end{remark}

\npr {\bf Proof of Theorem~\ref{thm:manin}.}
Statement (2) follows from Lemma~\ref{lm:4.4}. We prove statement (3) by induction on the length of $N$. If $N$ is simple, then there is nothing to show and we may assume $N$ is not simple. 
By Lemmas~\ref{lm:4.3} and \ref{lm:4.4}, we have a short exact sequence of $F$-spaces
\[ 0 \longrightarrow N'\longrightarrow N \longrightarrow N_k^\lambda \longrightarrow 0\]
for some $\lambda\in \Q$. By the induction hypothesis, $N'$ is a direct sum of $N_k^{\lambda'}$ and hence by Lemma~\ref{lm:4.1} this short exact sequence splits. 
It follows that 
\begin{equation}
    \label{eq:dec} 
     N\simeq \bigoplus_{\lambda\in \Q} (N^\lambda_k)^{m_\lambda},
\end{equation}
where the multiplicity $m_\lambda=0$ for almost all $\lambda$. By Theorem~\ref{thm:endo}, for each $\lambda\in \Q$, one has \[ \Hom(N^\lambda_k, N)=((K^\lambda)^\opp)^{m_\lambda}. \]  
Therefore, the multiplicity $m_\lambda$ is uniquely determined by $N$ and the decomposition \eqref{eq:dec} is unique up to permutation. \qed

\section{Supersingular abelian varieties}

Our first aim is to prove the equivalence of the following
definitions for supersingular abelian varieties \cite[Theorem 4.2]{oort:subvar}:

\begin{defn}
  An abelian variety $X$ over a field $K$ is called
  \emph{supersingular} if it satisfies one of the following equivalent
  properties
  \begin{itemize}
  \item [(a)] $X$ has single slope $1/2$.
  \item [(b)] $X$ is isogenous to a product of supersingular elliptic
    curves over the  algebraic closure $\ol K$.     
  \end{itemize}
\end{defn}

The direction condition (b) $\implies$ 
condition (a) is obvious. The other direction will be proved in Proposition~\ref{prop:14}.
For this moment, let us call $X$ \emph{supersingular} if it satisfies condition (a).

We shall also give a different proof of the following well-known results due to Oort \cite[Theorem 2]{oort:product}, and to Deligne, Ogus and Shioda (cf.~{\cite[Theorem 6.2]{ogus:ss}} and {\cite[Theorem 3.5]{shioda}}). 

In this section, $k$ denotes an \ac field.
\begin{thm}[Oort]
  \label{thm:oort}
  Let $X$ be a $g$-dimensional abelian variety over $k$ 
  and suppose that $a(X)=g$. Then there exist supersingular
  elliptic
  curves $E_1,\dots, E_g$ and an isomorphism $X\simeq E_1\times
  \dots \times E_g$.  
\end{thm}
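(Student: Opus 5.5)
Since $a(X)=g$, the \dieu module $M=M(X)$ has rank $2g$ and satisfies $\dim_k M/FM=\dim_k M/VM=g=\dim_k M/(F,V)M$. Hence $FM=VM$ and $F^2M=pM$. As noted after the definition of the $a$-number, this gives $M\simeq M_{1,1}^{\oplus g}\otimes W(k)$. Let $E$ be any supersingular elliptic curve over $k$. By Lemma~\ref{lm:elliptic} we have $M(E)\simeq M_2=M_{1,1}\otimes W(k)$, and so $M(X)\simeq M(E^g)$. The plan is to use this to build an isogeny $E^g\to X$ whose kernel is killed by Frobenius, and then show that the target of such an isogeny is again a product of supersingular elliptic curves.

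\textbf{Step 1: a minimal isogeny.} First $X$ is supersingular in sense (a): by the definition of slopes, $M\otimes L\simeq N^{1/2}_k{}^{\,g}$. I would then use the classical fact that $X$ is isogenous to $E^g$; this is Proposition~\ref{prop:14} to come, or equivalently Tate's theorem over a finite field of definition. To keep the argument self-contained, I would instead construct the isogeny directly from \dieu modules. Put $X':=X/\ker(F_{X/k})$, which is isomorphic to $X^{(p)}$. Because $FM=VM$, the subgroup $\ker F_{X/k}$ is an $\alpha$-group: it is killed by $F$ and by $V$, so it is isomorphic to $\alpha_p^g$. The same holds for $\ker V_{X^{(p)}}$. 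So $X$ is a quotient of $X^{(p)}$ by a subgroup $\alpha_p^g$. Dually, the covariant $M(X)\supset FM=pM\cdot$(twist) realises $M(X)$ as $p^{-1}F M$ inside $M\otimes L$.

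\textbf{Step 2: reduction to quotients of $E^g$ by $\alpha$-subgroups.} Choose an isogeny $\varphi\colon E^g\to X$ of minimal degree. I would then show that $\ker\varphi$ is annihilated by $F$, as follows. Via the anti-equivalence of Theorem~\ref{thm:dieu}, $\varphi$ corresponds to an inclusion of \dieu lattices $M(X)\subseteq M(E^g)$ inside a common $F$-space. Both lattices satisfy $F^2=p$, so both are generated by their skeleta over $W(\F_{p^2})$, as in the proof of Lemma~\ref{lm:elliptic}. Consider the smallest superlattice $M(X)+F^{-1}M(X)$ and similar operations. If $M(E^g)\ne M(X)$ and $M(E^g)\not\subseteq F^{-1}M(X)$, then some intermediate \dieu lattice of the form $M(X)+ p^{-1}FVM'$ gives a factorisation of $\varphi$ through a smaller-degree isogeny. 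That contradicts minimality unless the intermediate object is again $\cong E^g$. The key input for this is that every \dieu module with $a=g$ and the right rank is $\cong M(E^g)$.

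\textbf{Step 3: quotients of $E^g$ by subgroups $H\subseteq E^g[F]\simeq\alpha_p^g$.} Here $E^g[F]=\ker F$ is isomorphic to $\alpha_p\otimes_{\Fp} k^g$. A subgroup $H$ is given by a $k$-subspace $U\subseteq k^g$ of some dimension $d$. The group $\GL_g(O_B)\subseteq\Aut(E^g)$, where $O_B=\End(E)$, acts on $k^g$ through the reduction $O_B\to\F_{p^2}$ composed with the embedding into $k$. Using $\GL_g(k)$ together with the freedom to change the elliptic curves, I would try to move $U$ to a coordinate subspace $k^d\times 0$. Then $E^g/H\simeq (E/E[F])^d\times E^{g-d}=(E^{(p)})^d\times E^{g-d}$, which is a product of supersingular curves.

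\textbf{Main obstacle.} The hard step is making $U$ coordinate, because $\GL_g(\F_{p^2})$ cannot move an arbitrary $k$-rational subspace. The real argument must instead use $a(X)=g$: for a non-coordinate $H$ one has $a(E^g/H)<g$. Concretely, $M(E^g/H)$ sits between $FM(E^g)$ and $M(E^g)$ with $\bar M=M/FM=k^g$ and image $U^\perp$. The condition $F N= VN$ forces $U$ to be $\sigma^2$-stable, so $U$ is defined over $\F_{p^2}$. This is exactly where the skeleton argument enters. A $\F_{p^2}$-rational subspace can then be made coordinate by $\GL_g(O_B)$, which surjects onto $\GL_g(\F_{p^2})$, and this finishes the proof.
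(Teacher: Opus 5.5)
\textbf{There is a genuine gap in your Step 2.} It is the heart of the matter.

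First, the inclusion $M(X)\subseteq M(E^g)$ records only the $p$-part of $\ker\varphi$. Nothing in your argument controls the prime-to-$p$ part of a minimal isogeny, and Dieudonn\'e theory cannot see it.

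Second, the minimality argument is circular. An intermediate lattice $M(X)\subsetneq N\subsetneq M(E^g)$ gives a factorization $E^g\to Y\to X$. This contradicts minimality only \emph{if} $Y\simeq E^g$, and all you know is $M(Y)\simeq M(E^g)$. Passing from that to $Y\simeq E^g$ is exactly Theorems~\ref{thm:oort} and~\ref{thm:DOS} applied to $Y$. Your sentence ``contradicts minimality unless the intermediate object is $\cong E^g$'' has the implication reversed.

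In fact, with $E$ fixed, the claim of Step 2 is false for $g=1$. If $E'$ is supersingular and not isomorphic to $E$ or $E^{(p)}$, then no isogeny $E\to E'$ has kernel inside $E[F]\simeq\alpha_p$. Nothing in Step 2 uses $g\ge 2$, so it cannot be right as stated. For $g\ge 2$ and $E$ defined over $\Fp$, your Steps 2 and 3 together would give $X\simeq E^g$. That is, they would prove Theorem~\ref{thm:DOS}, which needs global input you never supply. In the paper this input is strong approximation, via Lemma~\ref{lm:15} and Proposition~\ref{prop:17}, applied directly to $X\in\Lambda_{E_1\times\cdots\times E_g}$.

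Likewise, the ``self-contained'' version of Step 1 produces no isogeny $E^g\to X$, since Dieudonn\'e modules cannot create morphisms of abelian varieties. You must invoke Proposition~\ref{prop:14}.

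\textbf{How to repair it.} Your Step 3 computation is correct: a quotient of $E^g$ by a subgroup of $E^g[F]$ is superspecial exactly when the subgroup is $\F_{p^2}$-rational. The strategy can be saved by changing what you minimize. Take $\varphi\colon Y=E_1\times\cdots\times E_g\to X$ of minimal degree over \emph{all} products of supersingular curves; one exists by Proposition~\ref{prop:14}.

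Suppose $p\mid\deg\varphi$. Then $N':=M(X)+FM(Y)$ is a proper submodule of $M(Y)$: apply $F$, use $F^2M(Y)=pM(Y)$, and conclude by Nakayama. Also $FN'=VN'$, because $FM(X)=VM(X)$. Hence $\ker\varphi$ contains an $\F_{p^2}$-rational subgroup $H\simeq\alpha_p$ with $H\subseteq Y[F]$.

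Suppose instead a prime $\ell\ne p$ divides $\deg\varphi$. Then take $H\subseteq\ker\varphi$ of order $\ell$.

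In either case, consider the automorphisms $\left(\begin{smallmatrix}1&f\\0&1\end{smallmatrix}\right)$ with $f\in\Hom(E_j,E_i)$. For $g\ge 2$, using that $\Hom(E_j,E_i)$ surjects onto the relevant reductions, they act on $Y[F]$ through a group containing $\SL_g(\F_{p^2})$, and on $Y[\ell]$ through a group containing $\SL_{2g}(\F_\ell)$. So $H$ can be moved into the first factor, and then $Y/H\simeq (E_1/H_1)\times E_2\times\cdots\times E_g$. The induced isogeny $Y/H\to X$ has smaller degree, a contradiction.

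Incidentally, $\GL_g(O_B)\to\GL_g(\F_{p^2})$ is not surjective in general, but containing $\SL_g(\F_{p^2})$ suffices. The repaired route proves Oort's theorem with Proposition~\ref{prop:14} plus local linear algebra, avoiding Kneser and strong approximation. Unlike the paper's argument, however, it does not also yield Theorem~\ref{thm:DOS}.
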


\begin{thm}[Deligne, Ogus, Shioda] \label{thm:DOS}
  For any integer $g>1$ and  any $2g$ 
  supersingular elliptic
  curves $E_1,\dots, E_{2g}$ over $k$, one has
  \begin{equation}
    \label{eq:4}
    E_1\times \dots \times E_g\simeq E_{g+1}\times \dots \times E_{2g}.
  \end{equation}
\end{thm}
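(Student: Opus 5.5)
It suffices to show that for $g\ge 2$ the isomorphism class of $E_1\times\cdots\times E_g$ does not depend on the choice of the supersingular curves. Moreover, it suffices to treat $g=2$. Indeed, if $E_1\times E_2\simeq E_1'\times E_2'$ for all supersingular $E_i,E_i'$, then we can fix one supersingular curve $E$. By induction, replacing the factors two at a time (for $g\ge 3$, pair the last factor with an already-replaced one) gives $E_1\times\cdots\times E_g\simeq E^g$. So the plan reduces to proving $E_1\times E_2\simeq E\times E$ for a fixed $E$, and hence to showing that the isomorphism class of $E_1\times E_2$ is independent of $(E_1,E_2)$.

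For the case $g=2$ I would use the Deuring--Eichler correspondence \eqref{eq:1.1} and its higher-dimensional analogue. Fix $E$ with $O_B=\End(E)$. Every $E'\in\Lambda$ is of the form $E'\simeq \Hom_{O_B}(I,E)$, or equivalently $E\otimes_{O_B} I$, for a right (or left) projective $O_B$-ideal $I$. The functor $M\mapsto M\otimes_{O_B}E$ from finitely generated projective right $O_B$-modules to abelian varieties is fully faithful. In particular, it sends $I_1\oplus I_2$ to $E_1\times E_2$ and $O_B^2$ to $E\times E$.

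Thus the problem becomes the algebraic statement that every projective right $O_B$-module $P$ of rank $2$ (meaning $P\otimes\Q\simeq B^2$) is free. Since $O_B$ is a maximal order, $P\simeq O_B\oplus J$ by Steinitz-type decomposition. By Eichler's theorem (strong approximation for the reduced-norm-one group), the isomorphism class of a locally free module of rank $\ge 2$ over a maximal order is determined by its class under the reduced norm in $\Q^\times_{>0}\backslash\wh\Q^\times/\Nrd(\wh O_B^\times)$, since $B$ is definite but $\Mat_2(B)$ satisfies the Eichler condition via $\SL_2$. For $\Q$ this group is $\Q^\times_{>0}\backslash\wh\Q^\times/\wh\Z^\times=1$. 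Hence $P\simeq O_B^2$, which gives $E_1\times E_2\simeq E^2$.

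The main obstacle is strong approximation: the class of $P$ lives in $\GL_2(B)\backslash \GL_2(\wh B)/\GL_2(\wh O_B)$, and we must show that this double coset space collapses to the reduced-norm class group. The step I would do first is to apply strong approximation to $\SL_2(B)$, which is simply connected and noncompact at $\infty$ because $\SL_2(\bbH)$ is noncompact. This gives $\SL_2(\wh B)=\SL_2(B)\cdot(\SL_2(\wh B)\cap U)$ for open $U$. The reduced norm on $\GL_2(\wh O_B)$ surjects onto $\wh\Z^\times$, which disposes of the determinant part. If I wanted to avoid the adelic machinery in the spirit of this Note, the alternative is to use \dieu modules directly. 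Choose a minimal isogeny $\phi:E^2\to E_1\times E_2$ of $p$-power degree; one can arrange this since the prime-to-$p$ parts are handled by the $\ell$-adic Tate modules being free over $\Mat_2(\Z_\ell)$. Then compare the superspecial \dieu modules $M_{1,1}^{2}$ with their quasi-polarisation-free lattices, using that all superspecial lattices in $N^{1/2}_k{}^{2}$ are conjugate under $\GL_2(O_{D_p})$, and conclude by the same strong-approximation step globally.
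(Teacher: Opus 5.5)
Your proof is correct, but it takes a different route from the paper. It is essentially the classical argument via Eichler's theorem for projective modules over $O_B=\End(E)$. You transport the problem along $P\mapsto P\otimes_{O_B}E$, using the module-theoretic form of the Deuring correspondence ($E_i\simeq I_i\otimes_{O_B}E$ with $I_i$ locally free of rank one). You then show that every locally free $O_B$-module of rank $n\ge 2$ is free: strong approximation for $\SL_n(B)$ collapses $\GL_n(B)\backslash\GL_n(\wh B)/\GL_n(\wh O_B)$ onto the trivial group $\Q^\times_{>0}\backslash\wh\Q^\times/\wh\Z^\times$.

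The paper never passes to $O_B$-modules. It works with $\Lambda_X$, the set of abelian varieties whose $p$-divisible group is isomorphic to that of $X=E_1\times(E_2\times\cdots\times E_g)$. It parametrizes $\Lambda_X$ by Tate-module and Dieudonn\'e-lattice data as $G(\Q)\backslash G(\A_f)/G(\wh\Z)$ with $G=(\End X)^\times$ (Lemma~\ref{lm:15}). The same strong approximation then reduces everything to $\Zp^\times/\Nr(O_p^\times)$, which is trivial because $\End(E_1)\otimes\Zp$ sits in a diagonal block of $O_p$ (Proposition~\ref{prop:17}). The only input specific to the curves is that all supersingular elliptic curves over $k$ have isomorphic $p$-divisible groups.

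Your route is more concrete, with one fixed maximal order and an explicit class group. However, it imports the tensor-form Deuring correspondence, which the Note does not prove, and it only reaches varieties already known to be of the form $P\otimes_{O_B}E$. The paper's lemma applies verbatim to any supersingular $X$. So it yields Oort's theorem at once (every $X$ with $a(X)=g$ lies in $\Lambda_{E^g}$) and also the non-superspecial computation of Proposition~\ref{prop:19}.

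Some minor points:
\begin{itemize}
\item The reduction to $g=2$ and the ``Steinitz'' step $P\simeq O_B\oplus J$ are superfluous, since Eichler's theorem gives freeness in every rank $\ge 2$.
\item The sides in ``$\Hom_{O_B}(I,E)$, or equivalently $E\otimes_{O_B}I$'' need fixing, e.g.\ use $I\otimes_{O_B}E$ with $I$ a right ideal and $E$ a left $O_B$-module.
\item Your closing Dieudonn\'e sketch is essentially the paper's approach. There, the existence of a $p$-power-degree isogeny $E^2\to E_1\times E_2$ does not follow from local freeness of Tate modules; it already requires the global strong-approximation step.
\end{itemize}
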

For an abelian scheme $X\to S$ of relative dimension $g$ over a base scheme $S$ of characteristic $p$, we define the \emph{$a$-number $g$ locus} $S(a=g)$ of $S$ to be the maximal closed subscheme $T\subseteq S$ such that the restriction map $V_{X_T/T}: \Lie(X^{(p)}_T/T) \to \Lie(X_T/T)$ to $T$ vanishes. 

Let $g,d,n$ be positive integers such that $n\ge 3$ and $(n,pd)=1$, and let $\calA_{g,d,n}$ be the moduli space over $\Fpbar$ of $g$-dimensional polarized abelian varieties $(X,\lambda,\eta)$ of polarization degree $d^2$ with level-$n$ structure.   For the definition of a level-$n$ structure $\eta$ on a polarized abelian variety $(X,\lambda)$, we refer to \cite[Section 2.1, p.~453]{yu:ss_siegel}. $\calA_{g,d,n}$ is a fine moduli scheme over $\Fpbar$ and it admits a universal family $(\calX,\lambda_\calX, \eta_\calX) \to \calA_{g,d,n}$ of polarized abelian varieties with level-$n$ structure. Namely, for any polarized abelian scheme $(X,\lambda,\eta)$ of relative dimension $g$ with polarization degree $d^2$ with level-$n$ structure over an $\Fpbar$-scheme $S$, there exist a unique morphism $f:S\to \calA_{g,d,n}$ of $\Fpbar$-schemes and a unique isomorphism $(X,\lambda,\eta)\isoto f^*(\calX,\lambda_\calX, \eta_\calX)$ of such geometric objects over $S$.   
Let $\calA_{g,d,n}(a=g)$ be the $a$-number $g$ locus for the family $\calX\to \calA_{g,d,n}$ of abelian varieties.
Note that a member $(X,\lambda,\eta)\in \calA_{g,d,n}(k)$ has $a$-number $g$ if and only if the moduli map $\Spec k \to \calA_{g,d,n}$ factors through the closed subscheme $\calA_{g,d,n}(a=g)$.


\begin{lemma}\label{lm:12}
    The subscheme $\calA_{g,d,n}(a=g)$ is finite \'etale over $\Fpbar$.
\end{lemma}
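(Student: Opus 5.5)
To prove Lemma~\ref{lm:12}, I will show two things: that $\calA_{g,d,n}(a=g)$ is quasi-finite and proper over $\Fpbar$, hence finite, and that its tangent space vanishes at every $k$-point, which makes the finite scheme reduced and a finite disjoint union of copies of $\Spec \Fpbar$, i.e.\ finite \'etale.

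Since $\calA_{g,d,n}(a=g)$ is a closed subscheme of $\calA_{g,d,n}$, it is of finite type over $\Fpbar$. The \'etale property is local at closed points, and for a scheme locally of finite type over an algebraically closed field it reduces to the following: for every point $x\in \calA_{g,d,n}(a=g)(k)$, every morphism $\Spec k[\epsilon]/(\epsilon^2)\to \calA_{g,d,n}(a=g)$ extending $x$ is the constant one. Indeed, vanishing Zariski tangent spaces at all closed points forces every local ring at a closed point to be $k$ itself, so the scheme is discrete, reduced, and étale. Finiteness then follows because a discrete scheme of finite type has finitely many points. So the whole proof reduces to a tangent space computation.

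Let $(X,\lambda,\eta)$ correspond to $x$, and let $(X_R,\lambda_R,\eta_R)$ be a first-order deformation over $R=k[\epsilon]/(\epsilon^2)$ on which $V_{X_R/R}:\Lie(X_R^{(p)})\to\Lie(X_R)$ vanishes. The steps are as follows.

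\textbf{Reduce to the $p$-divisible group.} By Serre--Tate, deformations of $(X,\lambda,\eta)$ over $R$ are the same as deformations of the polarized $p$-divisible group $X[p^\infty]$; the level structure deforms uniquely because $(n,p)=1$. Since $a(X)=g$, the module $M(X)\simeq M_{1,1}^{\oplus g}\otimes W(k)$ has no slope $0$ or $1$, so $X[p^\infty]$ is connected with connected dual. It is therefore a $p$-divisible smooth formal group, and Theorem~\ref{thm:defo} applies to $M_0=\bfD_*(X[p^\infty])$. It is enough to show that the only deformation of $M_0$ over $R$ with $F\equiv 0$ on $M_d/VM_d$ is the trivial one $d=0$; this disposes of the polarization automatically.

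\textbf{Normalize the basis.} Because $a=g$, we have $FM_0=VM_0$, and $M_0$ has a basis $e_1,\dots,e_{2g}$ with $Fe_i=e_{g+i}$ and $e_{g+i}=V e_i$ for $1\le i\le g$, as in $M_2$ of Example~\ref{eg:2} taken $g$ times. In the notation of Theorem~\ref{thm:defo}, $(a_{ij})$ is then a permutation matrix sending $i\mapsto g+i$ and $g+i\mapsto i$.

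\textbf{Compute with \eqref{eq:2.4}.} For $i\le g$, the relation gives $Fe_i=e_{g+i}+\sum_{k\le g} d_{g+i,k}e_k$. Reducing modulo $VM_d$, whose reduction is spanned by $e_{g+1},\dots,e_{2g}$, this yields $F\bar e_i=\sum_k \bar d_{g+i,k}\bar e_k$ in $M_d/VM_d\simeq R^g$. The $F$ on $\bfD_*/V\bfD_*$ is the map induced by $V_{X/R}$ on Lie algebras, as recalled in Subsection~\ref{sec:2.3}. Requiring it to vanish therefore forces $\bar d_{g+i,k}=0$ for all $i,k$, so $d=0$, and the deformation is trivial by the uniqueness statement of Theorem~\ref{thm:defo}.

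The main obstacle I expect is this last step: making precise that $V_{X/R}$ on $\Lie$ corresponds to $F$ on $M_d/VM_d$, and checking that the twisted $e_j$ terms in \eqref{eq:2.4} contribute exactly $\bar d$ modulo $V M_d$ and $\epsilon^2$. This needs care with the Teichmüller lifts $d_{ij}=(\bar d_{ij},0,\dots)$, but the computation is direct.
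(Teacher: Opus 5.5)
Your proposal is correct and is essentially the paper's argument. The paper also reduces via Serre--Tate and Theorem~\ref{thm:cartier} to Norman's deformation theory and picks the same basis $Fe_j=e_{g+j}$, $e_{g+j}=Ve_j$; it then reads off $F(\bar e_j)=\sum_i t_{ij}\bar e_i$ on $M_R/VM_R$ over the universal first-order ring $R=\Fpbar[t_{ij}]/(t_{ij})^2$, which is your tangent-vector computation done all at once. Your extra bookkeeping fills in steps the paper leaves implicit: connectedness of $X[p^\infty]$ so that Theorem~\ref{thm:defo} applies, rigidity of the polarization, and the passage from vanishing tangent spaces at closed points to finite \'etale. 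The opening appeal to properness is never used and can be dropped.
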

\begin{proof}
    Let $(X_0,\lambda_0,\eta_0)\in \calA_{g,d,n}(a=g)(\Fpbar)$ be a member and let ${\rm Def}(X_0,\lambda_0)\subseteq {\rm Def}(X_0)$ denote the respective deformation functors of $(X_0,\lambda_0)$ and $X_0$. It suffices to show that the subfunctor ${\rm Def}(X_0)(a=g)=\Spec \Fpbar$.
    A theorem of Serre-Tate states that deforming abelian varieties amounts to deforming their $p$-divisible groups. Thus, it is equivalent to show ${\rm Def}(M_0)(a=g)=\Spec \Fpbar$ by Theorem~\ref{thm:cartier},
    where $M_0=\bfD_*(X_0)$ is the covariant \dieu module of $X_0$. 

    As $a(X_0)=g$, we can choose a $W$-basis $e_1,\dots e_{2g}$ of $M_0$ such that 
    \[ F e_j=e_{g+j}, \quad e_{g+j}=V e_j, \quad \forall\, 1\le j\le g. \]
    By Theorem~\ref{thm:defo}, the universal \dieu module $M_R$ over the universal first order deformation ring $R=\Fpbar [t_{ij}]_{1\le i,j\le g}/(t_{ij})_{1\le i, j\le g}^2$ can be constructed by the following generators and relations:
    \[  F e_j=e_{g+j}+\sum_{i=1}^g T_{ij} e_i, \quad e_{g+j}=V e_j, \quad \forall\, 1\le j\le g, \]
    where $T_{ij}=(t_{ij},0,\dots)\in W(R)$ is the Teichm\"uller lifting of $t_{ij}$. Thus, $F(\bar e_j)=\sum_i t_{ij} \bar e_i$ on $M_R/VM_R$, where $\bar e_i$ and $\bar e_j$ are the image of $e_i$ and $e_j$ in $M_R/VM_R$, and the vanishing locus of the map $F: M_R/V M_R \to M_R/ V M_R$ (which corresponds the vanishing locus of the Verschiebung map $V$ on the Lie algebra of the universal deformation) is defined by $t_{ij}=0$ for all $i,j$. This shows ${\rm Def}(X_0)(a=~\!g)=\Spec \Fpbar$ and the lemma. \qed
\end{proof}


\begin{lemma}\label{lm:13}
  Let $X$ be as in Theorem~\ref{thm:oort}. Then there is an abelian variety
  $X_0$ defined over a finite field $\Fq$ and there is an isomorphism
  $X\simeq X_0 \otimes k$. Moreover, one can choose $X_0/\Fq$ so that the Frobenius endomorphism $\pi_{X_0}$ is equal to $\sqrt{q}\in \Q$, and hence that $X_0\otimes \Fpbar$ is isogenous to $E_0^g  \otimes \Fpbar$, where $E_0$ is a supersingular elliptic curve over $\F_{p}$ with $\pi_{E_0}^2=-p$. 
\end{lemma}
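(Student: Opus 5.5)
Pick a polarization on $X$. Over the algebraically closed field $k$, an abelian variety has a polarization, and after composing with a suitable isogeny one may take its degree to be some $d^2$ with $(d,p)$ arbitrary. More simply, let $d^2$ be the degree of the chosen polarization. Then choose a level-$n$ structure $\eta$ with $n\ge 3$ and $(n,pd)=1$; this exists because $k$ is algebraically closed. The triple $(X,\lambda,\eta)$ is a $k$-point of $\calA_{g,d,n}$. Since $a(X)=g$, this point factors through $\calA_{g,d,n}(a=g)$. By Lemma~\ref{lm:12}, that scheme is finite étale over $\Fpbar$, so it is a finite disjoint union of copies of $\Spec\Fpbar$. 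Hence the $k$-point factors through $\Spec\Fpbar\to\Spec k$, and $(X,\lambda,\eta)\simeq(\calX_x,\lambda_x,\eta_x)\otimes_{\Fpbar}k$ for some $x\in\calA_{g,d,n}(\Fpbar)$. The fibre $\calX_x$ is an abelian variety over $\Fpbar$, so it descends to some finite field $\F_{q_1}$, since it is defined by finitely many equations. This gives $X_0'$ over $\F_{q_1}$ with $X\simeq X_0'\otimes k$.

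Next I arrange that the Frobenius is $\sqrt q$. By Theorem~\ref{thm:oort}-type reasoning, $a(X)=g$ means the Dieudonné module is $M_{1,1}^{\oplus g}\otimes W(k)$, as stated after the definition of the $a$-number. So $F^2M=pM$ and $X$ has single slope $1/2$. Consequently every eigenvalue of $\pi_{X_0'}$ has all $p$-adic valuations (normalized by $q_1$) equal to $1/2$. Hence $\pi^2/q_1$ is an algebraic integer, a unit at all places, with all archimedean absolute values $1$. It is therefore a root of unity. Replace $\F_{q_1}$ by $\F_{q}$ with $q=q_1^{2m}$ for suitable $m$; then $\pi_{X_0}=\pi_{X_0'}^{2m}$ satisfies $\pi_{X_0}^2\cdot q^{-1}$ equal to a root of unity raised to a power. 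Choosing $m$ divisible by the order kills it, so $\pi_{X_0}^2=q$. Enlarging once more so that $q$ is an even power of $p$ and passing to an extension of degree $2$ if needed, $\pi_{X_0}$ becomes a rational integer power of $p$. Concretely, $\pi_{X_0}=\pm\sqrt q$, and after squaring the field again, $\pi_{X_0}=\sqrt q$.

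Finally, the comparison with $E_0^g$. For $E_0$ over $\Fp$ with $\pi_{E_0}^2=-p$ (for example $y^2=x^3-x$ when $p\equiv 3\bmod 4$; in general some supersingular curve over $\Fp$ with trace $0$ exists by Deuring/Honda–Tate), over $\F_{p^{2e}}$ we get $\pi=(-p)^e$. Taking $q=p^{2e}$ with $e$ even gives Frobenius $p^e=\sqrt q$. Then $X_0$ and $E_0^g\otimes\Fq$ have the same characteristic polynomial of Frobenius, $(T-\sqrt q)^{2g}$. Tate's isogeny theorem then gives $X_0\sim E_0^g\otimes\Fq$, and hence the same holds over $\Fpbar$.

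The main obstacle is the first step: producing a point of the moduli space, meaning a polarization of degree prime to the level together with the level structure. It also requires justifying that a $k$-point of a finite étale $\Fpbar$-scheme comes from an $\Fpbar$-point, which is where Lemma~\ref{lm:12} does the real work. The root-of-unity argument and the appeal to Tate's theorem are standard.
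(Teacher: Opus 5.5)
Your proposal is correct and follows the paper's proof essentially step for step: realize $X$ as a point of $\calA_{g,d,n}(a=g)$, use Lemma~\ref{lm:12} to descend it to $\Fpbar$ and then to a finite field, use slope $1/2$ to get Frobenius eigenvalues $\sqrt q\,\zeta$ with $\zeta$ a root of unity, base change to remove the roots of unity, and finish with Tate's theorem. Your Kronecker argument simply supplies a proof of the fact the paper cites from Oort; like the paper, it tacitly uses that $\pi$ is semisimple (i.e.\ $\Q[\pi]$ is a product of fields) to pass from the eigenvalues to the identity $\pi_{X_0}=\sqrt q$, and your extra ``$\pm\sqrt q$'' step is redundant, since once $m$ is divisible by the order of the roots of unity you already have $\pi_{X_0}=q_1^m=\sqrt q$.
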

\begin{proof}
  Choose a polarization $\lambda$ and a level-$n$ structure $\eta$ on $X$, and make a member $(X,\lambda,\eta)\in \calA_{g,d,n}(k)$ for suitable integers $g,d,n$. Since $a(X)=g$, one has 
  $(X,\lambda,\eta)\in \calA_{g,d,n}(a=g)(k)=\calA_{g,d,n}(a=g)(\Fpbar)$ by  Lemma~\ref{lm:12}. Thus, there is a member $(X_0,\lambda_0,\eta_0)$ over a finite field $\Fq$ and there is an isomorphism  $(X,\lambda,\eta)\simeq (X_0,\lambda_0,\eta_0)\otimes k$. Since $X_0$ has single slope $1/2$, the Frobenius endomorphism has
  all eigenvalues $\sqrt{q} \zeta$, for some root $\zeta$ of
  unity \cite[p.~116]{oort:subvar}. Replacing $X_0/\Fq$ by a suitable base change $X_0\otimes \F_{q^m}$, we may assume that $\pi_{X_0}=(\pi_{E_0^g})^{[\Fq:\Fp]}= \sqrt{q}\in \Q$. Then $X_0$ and $E_0^g \otimes \Fq$ are isogenous over $\Fq$ by a theorem of Tate~\cite[Theorem 1]{tate:eav} as their Frobenius endomorphisms have the same characteristic polynomial . \qed   
\end{proof}

\begin{prop}\label{prop:14}
  Any $g$-dimensional abelian varieties $X$ over $k$
  with single slope $1/2$ is isogenous to $E_0^g\otimes_{\Fp} k$, where $E_0$ is as in Lemma~\ref{lm:13}. In particular any two such abelian varieties over $k$ are isogenous.
\end{prop}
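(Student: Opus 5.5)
The plan is to reduce to the superspecial case handled in Lemma~\ref{lm:13} by finding an abelian variety $Y$ with $a(Y)=g$ that is isogenous to $X$.

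\textbf{Step 1: a Dieudonn\'e lattice with $a=g$.} Let $M=M(X)$. Since $X$ has single slope $1/2$, Theorem~\ref{thm:manin} gives an isomorphism of $F$-spaces
\[ N:=M\otimes_W L\simeq (N^{1/2}_k)^{g}. \]
Inside $N^{1/2}_k=(L[\xi],F_1)$ with $r=2$, $s=1$, the lattice $W[\xi]$ is a Dieudonn\'e module with $F=V$ and $F^2=p$. It is isomorphic to $M_2\otimes W(k)$ from Example~\ref{eg:2}. Hence $N$ contains the Dieudonn\'e module
\[ M':=(W[\xi])^{g}, \qquad F M'=VM', \qquad \dim_k M'/(F,V)M'=g. \]
Replacing $M'$ by $p^mM'$ for $m$ large, we may assume $M'\subseteq M$. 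Both are $W$-lattices of full rank in $N$, so $M/M'$ has finite length. It is a finite Dieudonn\'e module because $M'$ is stable under $F$ and $V$.

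\textbf{Step 2: realise $M'$ by an isogeny.} By the contravariant Dieudonn\'e theory for finite group schemes, the finite quotient $M/M'$ corresponds to a finite subgroup scheme $H\subseteq X[p^m]$. We then set $Y:=X/H$, so that $M(Y)\simeq M'$ and the natural map $X\to Y$ is an isogeny. This is the main obstacle: the paper has only stated Theorem~\ref{thm:dieu} for $p$-divisible groups.

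I would argue it as follows. Let $G'$ be the $p$-divisible group with $M(G')=M'$, given by Theorem~\ref{thm:dieu}. The inclusion $M'\subseteq M$ corresponds to a morphism $X[p^\infty]\to G'$. Its kernel $H$ is finite, because it becomes invertible after inverting $p$. We then put $Y=X/H$ and note $Y[p^\infty]\simeq X[p^\infty]/H\simeq G'$. Since $H\subseteq X[p^m]$ is a finite subgroup scheme, the quotient $Y$ is an abelian variety.

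\textbf{Step 3: conclude.} Now $a(Y)=a(M')=g$, so Lemma~\ref{lm:13} shows that $Y$ is isogenous to $E_0^g\otimes k$. Therefore $X$, being isogenous to $Y$, is isogenous to $E_0^g\otimes_{\Fp}k$. The final assertion follows because isogeny is an equivalence relation: any two such abelian varieties are both isogenous to $E_0^g\otimes_{\Fp}k$. As a by-product, this also gives the direction (a)$\Rightarrow$(b) of the definition of supersingular.
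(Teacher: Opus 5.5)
Your proposal is correct and follows the paper's proof. The paper says only that, by the Manin--Dieudonn\'e theorem, $X$ is isogenous to some $Y$ with $a(Y)=g$, and then applies Lemma~\ref{lm:13}; your Steps~1--2 (placing a copy of $(M_2\otimes W(k))^g$ inside $M(X)$ and realising it as $M(X/H)$ via Theorem~\ref{thm:dieu}) supply the details behind that sentence. Your finiteness argument for $H$ is best made explicit: choosing $n$ with $p^nM\subseteq M'$ gives a morphism $\psi$ with $\psi\circ\phi=p^n$, so $\ker\phi\subseteq X[p^n]$ and $\phi$ is surjective.
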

\begin{proof}
  By the Manin-\dieu Theorem, $X$ is isogenous to an abelian variety
  $Y$ over $k$ satisfying $a(Y)=g$. As $Y$ is isogenous to $E_0^g\otimes k$ by Lemma~\ref{lm:13}, $X$ is isogenous to $E_0^g\otimes k$.  \qed   
\end{proof}

\begin{lemma}\label{lm:15}
  Let $X$ be a 
  supersingular abelian variety over $k$ of dimension $g>1$, and $O:=\End(X)$ the endomorphism ring of $X$. Then there are natural 
  bijections between the following three finite sets:
  \begin{itemize}
  \item[(a)]  The set $\Lambda_X$ of isomorphism classes of supersingular abelian 
  varieties $X$ over $k$ satisfying $X'[p^\infty]\simeq X[p^\infty]$.

  \item[(b)] The set $\Cl(O)$ of isomorphism classes of locally free right ideals of $O$. 
  
  \item[(c)] The set $\Zp^\times/\Nr(O_p^\times)$, where $O_p=O\otimes_{\Z} \Zp$ and $\Nr: O_p^\times \to \Zp^\times$ denotes the reduced norm map.  
  \end{itemize}
\end{lemma}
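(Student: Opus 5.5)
We identify (a) with (b) via the standard Hom-module construction, and (b) with (c) via strong approximation for the reduced-norm-one group.

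\emph{From (a) to (b).} Given $X'$ in $\Lambda_X$, first produce an isogeny $\varphi:X'\to X$ whose kernel has order prime to $p$. By Proposition~\ref{prop:14}, $X'$ and $X$ are isogenous. Any isogeny factors into a $p$-power part and a prime-to-$p$ part. We absorb the $p$-part as follows. An isogeny $\psi:X'\to X$ induces an inclusion of Dieudonn\'e lattices $M(X)\hookrightarrow M(X')$ inside the same isocrystal. Since $M(X')\simeq M(X)$ as Dieudonn\'e modules, the isomorphism of $p$-divisible groups differs from $\psi[p^\infty]$ by an element of $\End^0(X[p^\infty])=\End^0(X)\otimes\Qp$. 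By Tate's theorem over a finite field model (Lemma~\ref{lm:13}), $\End(X)\otimes\Zp\simeq\End(X[p^\infty])$. Approximating $p$-adically by an element of $\End^0(X)$, we correct $\psi$ to an isogeny that is an isomorphism on $p$-divisible groups.

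With such $\varphi$ in hand, the assignment $X'\mapsto I:=\varphi_*\Hom(X,X')\subseteq \End^0(X)$, or more canonically $X'\mapsto\Hom(X,X')$ as a right $O$-module, gives a locally free right $O$-ideal. It is locally free because at each prime $\ell$ the module $\Hom(X,X')\otimes\Z_\ell$ is $\Hom(T_\ell X,T_\ell X')$ (resp.\ $\Hom(X[p^\infty],X'[p^\infty])$ at $p$), which is free of rank one over $O_\ell$ since $T_\ell X'\simeq T_\ell X$ and $X'[p^\infty]\simeq X[p^\infty]$. The inverse map is $I\mapsto I\otimes_O X$, the Serre tensor construction, or equivalently the quotient of $X$ by the finite subgroup cut out by $I$ after scaling into $O$ prime to $p$. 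The equivalence of categories ``abelian varieties isogenous to $X$ with isomorphic local data'' $\leftrightarrow$ ``locally free right $O$-modules of rank one'' follows from Tate's theorem $\Hom(X,X')\otimes\Z_\ell=\Hom(T_\ell X,T_\ell X')$ and its $p$-adic analogue. Isomorphism classes then correspond.

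\emph{From (b) to (c).} Adelically,
\[ \Cl(O)=B^\times\backslash \wh B^\times/\wh O^\times, \qquad B=\End^0(X)\simeq \Mat_g(B_{p,\infty}). \]
Since $g>1$, $B$ is a central simple algebra over $\Q$ of degree $2g\ge 4$ with $B\otimes\R\simeq \Mat_g(\mathbb H)$ non-compact modulo the center. Hence strong approximation holds for $B^1$ (reduced norm one) with respect to $\infty$. The reduced norm therefore induces a bijection
\[ \Cl(O)\isoto \Q^\times_{>0}\backslash \wh\Q^\times/\Nr(\wh O^\times), \]
where we use Eichler's norm theorem: $\Nr(B^\times)=\Q_{>0}^\times$, the positivity coming from $\mathbb H$ at $\infty$. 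Away from $p$, $O_\ell\simeq\Mat_{2g}(\Z_\ell)$, so $\Nr(O_\ell^\times)=\Z_\ell^\times$. Since $\Q$ has class number one with $\wh\Q^\times=\Q^\times_{>0}\wh\Z^\times$, the quotient reduces to $\wh\Z^\times/\Nr(\wh O^\times)=\Zp^\times/\Nr(O_p^\times)$.

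Finiteness of all three sets follows from (c), since $\Nr(O_p^\times)$ contains $(\Zp^\times)^{2g}$ and so has finite index.

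The main obstacle is the first step: rigorously matching $\Lambda_X$ with rank-one locally free $O$-modules, particularly at $p$. This requires the $p$-adic Tate isomorphism $\End(X)\otimes\Zp\simeq\End(M(X))$ and $\Hom(X,X')\otimes\Zp\simeq\Hom(M(X'),M(X))$ for supersingular $X$ over $k=\bar k$. I would obtain these by descending to a finite field via Lemma~\ref{lm:13}, where all endomorphisms are defined once $\pi=\sqrt q$, and then invoking Tate's theorem there.
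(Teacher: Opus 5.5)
You correctly identify the $p$-adic Tate isomorphism as the main obstacle, but the way you propose to obtain it does not work. The rest of the architecture is sound.

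\textbf{What agrees with the paper, and what differs.} Your (b)$\leftrightarrow$(c) step is the paper's argument: the norm theorem gives $\Nr(B^\times)=\Q^\times_{>0}$, the reduced norm is locally surjective, and strong approximation holds for the norm-one group because $\SL_g(\mathbb{H})$ is non-compact for $g>1$. Your identification of (a) with (b) through the right $O$-module $\Hom(X,X')$ and the Serre tensor construction is a legitimate alternative to the paper's route. The paper instead parametrizes pairs $(X',\varphi)$, with $\varphi$ a quasi-isogeny, by their lattices $(L_\ell)_\ell$, and obtains $\Lambda_X\simeq G(\Q)\backslash G(\A_f)/G(\wh\Z)$.

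\textbf{The gap.} Everything else you do depends on the integral isomorphisms $\End(X)\otimes\Z_\ell\simeq\End(T_\ell X)$ and $\End(X)\otimes\Zp\simeq\End_{\rm DM}(\bfD_*(X))$, together with their $\Hom(X,X')$ analogues. They are needed for local freeness of $\Hom(X,X')$, for the Serre tensor inverse, and for $O_\ell\simeq\Mat_{2g}(\Z_\ell)$. You propose to get them by descending $X$ to a finite field via Lemma~\ref{lm:13}. That lemma only treats $a(X)=g$; it rests on Lemma~\ref{lm:12}, the zero-dimensionality of the $a$-number-$g$ locus. A general supersingular $X$ over an arbitrary algebraically closed $k$ need not have a model over a finite field. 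For instance, let $k$ have positive transcendence degree over $\Fp$, and take a supersingular surface with $a(X)=1$ whose parameter $t\in\bbP^1(k)$ is transcendental. Such an $X$ cannot be of the form $Y\otimes k$ with $Y$ over $\Fpbar$. As in the proof of Proposition~\ref{prop:19}, $X[p^\infty]$ determines $t$ up to the action of $\GL_2(\F_{p^2})$, so such a model would force $t\in\bbP^1(\Fpbar)$.

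\textbf{The fix used in the paper.} The paper gets these isomorphisms without any descent. The maps $\End(X)\otimes\Z_\ell\to\End(T_\ell X)$ and $\End(X)\otimes\Zp\to\End_{\rm DM}(\bfD_*(X))$ are injective with torsion-free cokernel. This is Mumford's argument: an endomorphism that is $\ell$ times something on $T_\ell X$ (resp.\ $p$ times something on $\bfD_*(X)$) kills $X[\ell]$ (resp.\ $X[p]$, by Dieudonn\'e theory). Hence it is $\ell$ (resp.\ $p$) times an endomorphism of $X$.

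By Proposition~\ref{prop:14}, $\End^0(X)\simeq\Mat_g(\End^0(E_0\otimes k))$ has $\Q$-dimension $4g^2$. This equals the rank of $\End(T_\ell X)$. It also equals the rank of $\End_{\rm DM}(\bfD_*(X))$, since the isocrystal is $(N^{1/2}_k)^g$, whose endomorphism algebra is $\Mat_g$ of a quaternion division algebra over $\Qp$. Hence both maps are isomorphisms. The same count handles $\Hom(X,X')$ for $X'$ isogenous to $X$.

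Alternatively, apply Lemma~\ref{lm:13} only to the superspecial $E_0^g\otimes k$. Then transport the rational statement along an isogeny $X\to E_0^g\otimes k$ and saturate.

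With this replacement your proof goes through. Your preliminary construction of an isogeny $X'\to X$ that is an isomorphism on $p$-divisible groups is also unnecessary. Local freeness of $\Hom(X,X')$ uses only $T_\ell X'\simeq T_\ell X$ and $X'[p^\infty]\simeq X[p^\infty]$.
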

\begin{proof}
  That the sets (a) and (b) are bijective
  is the classical formulation of the special case of 
  \cite[Theorem 2.1 and Proposition 2.2]{yu:mass_hb} without polarization; also see its generalizations in \cite{xue-yu:counting,yu:smf}. We provide the proof for the reader's convenience. We define the automorphism group scheme $G$ of $X$ over $\Z$ by
  \[ G(R):=(O\otimes R)^\times\]
  for any commutative ring $R$. We have an injective map 
  \begin{equation}\label{eq:3.2}
     \End(X)\otimes \Z_\ell\to \End(T_\ell(X)) 
  \end{equation}
  of finite free $\Z_\ell$-modules for any prime $\ell\neq p$, where $T_\ell(X)$ is the Tate module of $X$, and an injective map 
  \begin{equation}\label{eq:3.3}
     \End(X)\otimes \Z_p\to \End_{\rm DM}(\bfD_*(X)) 
  \end{equation}
  of finite free $\Z_p$-modules, where $\bfD_*(X)$ is the covariant \dieu module of $X$.  See \cite[Section 19, Theorem 3, p.~176]{mumford:av} for the statement with $\ell\neq p$; the same argument together with Theorem~\ref{thm:dieu} works well for $\ell=p$.  By Proposition~\ref{prop:14}, $\End^0(X)\simeq \Mat_g(\End^0(E_0\otimes k)), $ and hence it has dimension $4g^2$ over $\Q$. The dimension counting implies that the injective maps in \eqref{eq:3.2} and \eqref{eq:3.3} are isomorphisms. Thus, we may identify $G(\Q_\ell)=\Aut(V_\ell(X))$ and $G(\Q_p)=\Aut_{\rm DM}(N)$, where $V_\ell(X):=T_\ell(X)\otimes \Q_\ell$ and $N:=\bfD_*(X)\otimes \Qp$. 

  Let $\wt \Lambda_X$ be the set of pairs $(X',\varphi)$, where $X'$ is an abelian variety over $k$ and $\varphi:X'\to X$ is a quasi-isogeny, that is, $\varphi$ is an element in $\Hom(X',X)\otimes \Q$ and $m \varphi$ is an isogeny for some integer $m$. Using Proposition~\ref{prop:14}, one easily checks that the map $(X',\varphi)\mapsto [X']$, the isomorphism class of $X'$ over $k$, induces a bijection 
  $G(\Q)\backslash \wt \Lambda_X \isoto \Lambda_X$. On the other hand, to each $(X',\varphi)$ we attach a $\Z_\ell$-lattice $L_\ell$ for primes $\ell$ including $p$ by 
  \[ L_\ell:=\varphi_*(T_\ell(X'))\subseteq V_\ell(X)\ (\ell \neq p), \quad L_p:=\varphi_*(\bfD_*(X'))\subseteq N, \]
  and we have $L_\ell=T_\ell(X)$ for almost all $\ell$. This shows that $\wt \Lambda_X$ is in bijection with the set of collections of $\Z_\ell$-lattices satisfying the above condition. For each $L_\ell$ there exists a unique coset $g_\ell G(\Z_\ell) \in G(\Q_\ell)/G(\Z_\ell)$ such that $L_\ell=g_\ell T_\ell(X)$ or  $L_p=g_p D_*(X)$. This yields a natural bijection $\wt \Lambda_X\isoto G(\A_f)/G(\wh \Z)$, $(X',\varphi)\mapsto [(g_\ell)_{\ell}]$ and natural bijections 
  \begin{equation}\label{eq:3.4}
      \Lambda_X\isoto G(\Q)\backslash G(\A_f)/G(\wh \Z)\simeq \Cl(O).  
  \end{equation}
  We remark that this step also works for $g=1$.    
  
  We now show a bijection of $\Cl(O)$ with $\Zp^\times/\Nr(O_p^\times)$. The reduced norm map induces a short exact sequence of algebraic groups 
\[\begin{tikzcd}
1 \arrow[r] & G^{\der}_\Q \arrow[r] & G_\Q \arrow[r] & \Gm \arrow[r] & 1.
\end{tikzcd}
\]  
   The derived group $G^{\der}_\Q$ is semi-simple and simply connected, and $G^{\der}(\R)$ is non-compact as $g>1$. 
   Kneser's Theorem $H^1(\Q_\ell, G^{\der}_\Q)=1$ \cite[Theorem 6.4, p.~284]{platonov-rapinchuk:agnt} implies that $\Nr(G(\Q_\ell))=\Q_\ell^\times$ and hence that $\Nr(G(\A_f))=\A_f^\times$. By the Hasse Norm Theorem (see \cite[(32.9) Theorem, p.~275 and (32.20) Theorem, p.~280]{reiner:mo}, we have $\Nr(G(\Q))=\Q^\times_{>0}$. 
   Then strong approximation implies 
  \[ \Nr: G(\Q)\backslash G(\A_f)/G(\wh \Z) \isoto \Q^\times_{>0} \backslash \A_f^\times /\Nr(\wh O^\times)=\Zp^\times/\Nr(O_p^\times); \]
  see \cite[Section 4]{yu:swan} for more details. 
  This proves the lemma. \qed
\end{proof}


\begin{remark}
    (1) Note that the proof of bijectivity of \eqref{eq:3.2} and \eqref{eq:3.3} is elementary; it does not require Tate's theorem \cite[Main Theorem]{tate:eav}. Thus, the proof of the bijection (a)$\simeq$(b) is really elementary; it only requires Proposition~\ref{prop:14}. 

    (2) The statement \eqref{eq:3.4} is the generalization of \eqref{eq:1.1}, and the description (c) is a generalization of \eqref{eq:1.2}.
\end{remark}

\begin{prop}\label{prop:17}
    If $X=E\times X_1 $ is the product of an elliptic curve $E$ and a supersingular abelian variety $X_1$ of positive dimension, then $\Lambda_X$ is a singleton. 
\end{prop}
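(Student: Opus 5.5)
We reduce the statement to Lemma~\ref{lm:15}(c): since $X=E\times X_1$ is supersingular of dimension $g=1+\dim X_1>1$, Lemma~\ref{lm:15} gives a bijection $\Lambda_X\simeq \Zp^\times/\Nr(O_p^\times)$ with $O=\End(X)$. So it suffices to show that the reduced norm $\Nr: O_p^\times\to \Zp^\times$ is surjective. (Here $E$ is necessarily supersingular, since $X$ has single slope $1/2$.)

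The first step is to identify $O_p$. By the isomorphism \eqref{eq:3.3} established in the proof of Lemma~\ref{lm:15}, $O_p=\End(X)\otimes\Zp\simeq \End_{\rm DM}(\bfD_*(X))$. Since $\bfD_*(X)=\bfD_*(E)\oplus \bfD_*(X_1)$, the ring $O_p$ contains $\End_{\rm DM}(\bfD_*(E))\times \End_{\rm DM}(\bfD_*(X_1))$ as the block-diagonal part. In particular, for $u\in \End_{\rm DM}(\bfD_*(E))^\times$, the element $\diag(u,1)\in O_p^\times$ has reduced norm equal to the reduced norm of $u$ in $\End^0_{\rm DM}(\bfD_*(E)\otimes\Qp)$. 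Since the reduced norm of $\Mat_g(D)$ restricted to a block $\diag(u,1)$ is the reduced norm of $u$ in $D$, where $D$ is the quaternion division algebra over $\Qp$, this compatibility is formal.

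It therefore suffices to show that $\Nr(\End_{\rm DM}(M_2)^\times)=\Zp^\times$, where $M_2=\bfD_*(E)$ is as in Lemma~\ref{lm:elliptic} (the covariant module of a supersingular elliptic curve is again of this form by duality). By Lemma~\ref{lm:elliptic} and its skeleton argument, $\End_{\rm DM}(M_2)$ is the maximal order $O_D=W(\F_{p^2})[\Pi]$ of the quaternion division algebra $D$ over $\Qp$, with $\Pi=F$ on the skeleton, $\Pi^2=p$, $\Pi a=\sigma(a)\Pi$. This is Theorem~\ref{thm:endo} with $\lambda=1/2$, together with the observation that endomorphisms preserving the lattice $M_2$ are exactly the integral elements. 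Then $W(\F_{p^2})^\times\subseteq O_D^\times$, and on this subring the reduced norm is the field norm $\Nr_{\Q_{p^2}/\Qp}$. Since $\Q_{p^2}/\Qp$ is unramified, local class field theory (or a direct Hensel argument using surjectivity of the norm $\F_{p^2}^\times\to\Fp^\times$ plus $1+pW$ surjectivity) gives $\Nr(W(\F_{p^2})^\times)=\Zp^\times$.

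The main obstacle is the careful identification of $\End_{\rm DM}(\bfD_*(E))$ with the maximal order $O_D$, in particular verifying that the unit group contains $W(\F_{p^2})^\times$ acting on the skeleton basis $e_1,e_2$ via $a\mapsto(a,\sigma(a))$. This should follow directly since scalars of $W(\F_{p^2})$ act on $M_2^\diamond$ commuting with $F$. Once this is in place, $\Zp^\times/\Nr(O_p^\times)$ is trivial, and by Lemma~\ref{lm:15} $\Lambda_X$ is a singleton.
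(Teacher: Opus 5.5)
Your proposal is correct and matches the paper's proof: the paper also uses $O\supseteq \End(E)\times\End(X_1)$ to get $\Nr(O_p^\times)\supseteq \Nr\bigl((\End(E)\otimes\Zp)^\times\times\{1\}\bigr)=\Zp^\times$ and concludes by Lemma~\ref{lm:15}; it takes the surjectivity of the reduced norm on units of the local maximal quaternion order for granted, whereas you justify it via $\Z_{p^2}^\times$. One slip in that justification: scalar multiplication by $a\in W(\F_{p^2})$ does not commute with the $\sigma$-linear $F$ on $M_2^\diamond$ (and $F$ itself is not an endomorphism), so the element you need is the twisted map $e_1\mapsto ae_1$, $e_2\mapsto\sigma(a)e_2$, i.e.\ right multiplication by $a$ under $M_2^\diamond\simeq W(\F_{p^2})[F]/(F^2-p)$, which is what you actually wrote and which does commute with $F$.
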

\begin{proof}
    Since $O\supseteq \End(E)\times \End(X_1)$, we have $\Nr(O_p^\times)\supseteq \Nr((\End(E)\otimes \Zp)^\times\times \{1\})=\Zp^\times$ and hence $\Lambda_X$ is a singleton by Lemma~\ref{lm:15}.  \qed
\end{proof}

\npr {\bf Proof of Theorem~\ref{thm:DOS}.} Take $X=E_1\times (E_2\times \dots \times E_g)$ and $X'=E_{g+1}\times \dots \times E_{2g}$. Since
$X'[p^\infty]\simeq X[p^\infty]$, we have $X'\in \Lambda_X$ and get $X'\simeq X$ by Proposition~\ref{prop:17}. \qed

\npr {\bf Proof of Theorem~\ref{thm:oort}.}
There is nothing to prove if $g=1$ so we assume $g>1$.
Take $X'=E_1\times \dots \times E_g$, where $E_i$, $1\le i\le g$, are any supersingular elliptic curves. As $a(X)=a(X')=g$, we have $X'[p^\infty]\simeq X[p^\infty]$. This shows $X\in \Lambda_{X'}$ and hence $X'\simeq X$ by Proposition~\ref{prop:17}. \qed

With Theorems~\ref{thm:oort} and~\ref{thm:DOS}, we make the following definition:
\begin{defn}
  An abelian variety $X$ over a field $K$ is called
  \emph{superspecial} if it satisfies one of the following equivalent
  properties
  \begin{itemize}
  \item [(a)] $a(X)=g$;
  \item [(b)] $X\otimes \ol K$ is isomorphic to $E^g$ for a supersingular elliptic curve $E$ over $\ol K$.  
  \end{itemize}
\end{defn}

We consider a supersingular abelian surface $X$ over $k$ with $a(X)=1$. 
Then there is a minimal isogeny $\varphi:E_0^2\otimes k \to X$ of degree $p$, where $E_0/{\F_{p}}$ is a supersingular elliptic curve with $\pi_{E_0}^2=-p$; see \cite[Lemma 1.8]{li-oort}.  This isogeny gives rise to an embedding $\iota: \alpha_p \embed \alpha_p^2 \subseteq E_0^2 \otimes k$ with $\iota(\alpha_p)=\ker \varphi$ and each embedding $\iota: \alpha_p \embed \alpha_p^2$ corresponds to a parameter $t\in \bbP^1(k)$. The quotient abelian surface $E_0^2\otimes k/\iota(\alpha_p)$ is superspecial if and only if $t\in \bbP^1(\F_{p^2})$. Thus for $a(X)=1$, we have $t\in \bbP^1(k)\setminus \bbP^1(\F_{p^2})$. We call $t\notin \F_{p^4}$ the first case (I) and $t\in \F_{p^4}\setminus \F_{p^2}$ the second case (II). Such a distinction does not depend on the choice of the representation $(E_0^2 \otimes k,\varphi)$. 

\begin{prop}\label{prop:19}
   (1) Let $X$ be a supersingular abelian surface over $k$ with $a(X)=1$. Then 
   \begin{equation}\label{eq:19}
      \Lambda_X\simeq \begin{cases}
       \F_p^\times/(\F_{p}^\times)^2, & \text{Case (I)}; \\
       \{1\}, &  \text{Case (II)}. 
   \end{cases} 
   \end{equation}
   (2) Any two non-superspecial supersingular abelian surfaces over $k$ in Case (II) are isomorphic.
\end{prop}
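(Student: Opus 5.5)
The plan is to compute the finite set in Lemma~\ref{lm:15}(c) directly. By Lemma~\ref{lm:15}, $\Lambda_X\simeq \Zp^\times/\Nr(O_p^\times)$, where $O_p=\End(X)\otimes\Zp\simeq \End_{\rm DM}(M)$ with $M:=\bfD_*(X)$; this identification is the isomorphism \eqref{eq:3.3} from that proof. So everything comes down to determining the image of $\Nr$ on $\End_{\rm DM}(M)^\times$. I will work with the minimal isogeny $\varphi:E_0^2\otimes k\to X$ of degree $p$ and the corresponding inclusion of covariant \dieu modules. Set $M_0:=\bfD_*(E_0^2\otimes k)$. After the usual normalization we may take $M\subset M_0$ (or $M_0\subset M$; only a relabelling changes) with $\dim_k M_0/M=1$ and $FM_0\subset M$. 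The line $M/FM_0\subset M_0/FM_0\simeq k^2$ is the one given by the parameter $t\in\bbP^1(k)$.

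\textbf{Step 1: reduction to a stabilizer.} I first show $\End_{\rm DM}(M)\subseteq \End_{\rm DM}(M_0)$. The point is that $M_0$ is canonically attached to $M$: it is the smallest superspecial \dieu module containing $M$, namely $M+F^{-1}VM$ or the analogous expression. Hence every endomorphism of $M$ preserves $M_0$. Next, $\End_{\rm DM}(M_0)=\Mat_2(O_{D})$, where $O_D$ is the maximal order of the quaternion division algebra $D$ over $\Qp$. Reduction gives a surjection $\Mat_2(O_D)\to \Mat_2(\F_{p^2})$, and $\F_{p^2}$ acts on $M_0/FM_0\simeq k^2$ through the $\F_{p^2}$-structure coming from the skeleton of $M_0$ (cf.\ the proof of Lemma~\ref{lm:elliptic}). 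Under these identifications, $O_p^\times$ is exactly the preimage in $\GL_2(O_D)$ of the stabilizer $S_t\subseteq \GL_2(\F_{p^2})$ of the line $t$, intersected with the elements that are also compatible with $F$ and $V$. Pinning down this compatibility is where the $\sigma$-twist enters: $F$ acts $\sigma$-linearly, so the stabilizer condition may involve $t$ and $t^{p^2}$ rather than $t$ alone. \emph{This is the step I expect to be the main obstacle}, and I will check it carefully in the explicit basis $e_1,e_2,Fe_1,Fe_2$ of $M_0$. The outcome I expect is the following: in Case (I), $S_t=\F_{p^2}^\times$ (scalars); in Case (II), $t$ generates $\F_{p^4}$ over $\F_{p^2}$ and $S_t\simeq \F_{p^4}^\times$, embedded as a non-split torus in $\GL_2(\F_{p^2})$.

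\textbf{Step 2: compute norms.} The kernel of $\GL_2(O_D)\to \GL_2(\F_{p^2})$ consists of elements congruent to $1$ modulo the maximal ideal. Their reduced norms lie in $1+p\Zp\subseteq (\Zp^\times)^2$, and every element of $1+p\Zp$ occurs as such a norm. Modulo $p$, the reduced norm on $\GL_2(O_D)$ becomes $x\mapsto N_{\F_{p^2}/\F_p}(\det \bar x)$. In Case (I) the image of $S_t$ consists of the elements $N_{\F_{p^2}/\F_p}(c^2)=N_{\F_{p^2}/\F_p}(c)^2$, so $\Nr(O_p^\times)=(\Zp^\times)^2$, and $\Lambda_X\simeq \Zp^\times/(\Zp^\times)^2\simeq \F_p^\times/(\F_p^\times)^2$ (take $p$ odd; for $p=2$ both sides are trivial, which would need a separate check). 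In Case (II) the determinant restricted to $\F_{p^4}^\times$ is $N_{\F_{p^4}/\F_{p^2}}$, so the composite map to $\F_p^\times$ is $N_{\F_{p^4}/\F_p}$. This map is surjective, hence $\Nr(O_p^\times)=\Zp^\times$ and $\Lambda_X$ is a singleton. This gives (1).

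\textbf{Step 3: part (2).} By (1) it suffices to show that any two Case (II) surfaces $X,X'$ satisfy $X[p^\infty]\simeq X'[p^\infty]$. Their \dieu modules are the modules $M_t,M_{t'}\subset M_0$ with $t,t'\in\bbP^1(\F_{p^4})\setminus\bbP^1(\F_{p^2})$. The group $\GL_2(\F_{p^2})$ acts on this set with stabilizer $\F_{p^4}^\times$. Comparing orders, $|\PGL_2(\F_{p^2})|/|\F_{p^4}^\times/\F_{p^2}^\times|=p^2(p^4-1)/(p^2+1)=p^4-p^2$, which equals the size of the set, so the action is transitive. Finally, every element of $\GL_2(\F_{p^2})$ lifts to $\GL_2(O_D)=\Aut(M_0)$. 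Such a lift carries $M_t$ onto $M_{t'}$, so the two $p$-divisible groups are isomorphic, and Lemma~\ref{lm:15} gives $X\simeq X'$.
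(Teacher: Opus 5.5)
Your proposal is correct and follows essentially the paper's proof. You embed $O_p$ into $\Mat_2(\Z_{p^2}[\Pi])$ via the superspecial module $M_0$, use that $1+\Pi\Mat_2(\Z_{p^2}[\Pi])\subseteq O_p^\times$ has norm group $1+p\Zp$, and reduce to $\Nr_{\F_{p^2}/\Fp}\circ\det$ on $m_\Pi(O_p^\times)$; this is the scalars $\F_{p^2}^\times$ in Case (I) and a non-split torus $\F_{p^4}^\times$ in Case (II). For (2) you use transitivity of $\GL_2(\F_{p^2})$ on $\bbP^1(\F_{p^4})\setminus\bbP^1(\F_{p^2})$ and lift to $\Aut(M_0)$; your orbit count replaces the paper's observation that $\F_{p^2}(t_1)=\F_{p^2}(t_2)$.

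The step you flag is unproblematic. $\Aut(M_0)$ preserves the skeleton and acts $\F_{p^2}$-linearly on $M_0/VM_0$, so $m_\Pi(O_p)$ is simply the stabilizer of the line $t$, with no $\sigma$-twist; the paper cites Yu--Yu for this rather than computing it. No separate $p=2$ check is needed either: since $\Nr(O_p^\times)\supseteq 1+p\Zp$, you get $\Zp^\times/\Nr(O_p^\times)\simeq\Fp^\times/(\Fp^\times)^2$ for every $p$.
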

\begin{proof}
    (1) The minimal isogeny $\varphi: E_0^2\otimes k\to X$ gives rise to an inclusion $\End(X) \embed \End(E_0^2\otimes k)$. Write $O_p:=\End(X)\otimes \Zp$ and $\End(E_0\otimes k)\otimes \Zp=\Z_{p^2}[\Pi]$ with relations $\Pi^2=-p$ and $\Pi a=\bar a \Pi$ for $a\in \Z_{p^2}$. The reduction modulo $\Pi$ gives a surjective map $m_\Pi:\Mat_2(\Z_{p^2}[\Pi])\to \Mat_2(\F_{p^2})$ and we have the commutative diagram:
    \[ \begin{tikzcd}
        \Mat_2(\Z_{p^2}[\Pi]) \arrow[r, "m_\Pi"] \arrow[d, "\Nr"] & \Mat_2(\F_{p^2}) \arrow[d, "\Nr_{\F_{p^2}/\Fp} \circ \det"] \\
        \Zp \arrow[r, "m_p"] & \Fp. 
    \end{tikzcd}
    \]
    One can check directly that the reduced norm map on $A\in \Mat_2(\Z_{p^2})\subseteq \Mat_2(\Z_{p^2}[\Pi])$ is given by $\Nr(A)=\det A \cdot \sigma(\det A)$ and the commutative diagram above follows.
    We also have a 
    short exact sequence 
    \[ 1 \longrightarrow 1+\Pi \Mat_2(\Z_{p^2}[\Pi]) \longrightarrow O_p^\times \longrightarrow m_{\Pi}(O_p^\times) \longrightarrow 1.\]
    We have $\Nr(1+\Pi \Mat_2(\Z_{p^2}[\Pi]))=1+p\Zp$, as it contains $\Nr \begin{bmatrix}        
    \begin{smallmatrix}
        1+\Pi \Z_{p^2}[\Pi] & 0 \\
        0 & 1
    \end{smallmatrix}\end{bmatrix}= \Nr(1+\Pi \Z_{p^2}[\Pi])=1+p\Zp$. It then follows that 
    \[ \Zp^\times /\Nr(O_p^\times)=\Fp^\times/\Nr_{\F_{p^2}/\Fp} \circ \det(m_\Pi(O_p^\times)). \] 
    
    According to \cite[Proposition 3.2]{yu-yu:mass_surface}, $m_\Pi(O_p)$ is a quadratic $\F_{p^2}$-subfield of $\Mat_2(\F_{p^2})$ in Case (II), and $m_\Pi(O_p)=\F_{p^2} \cdot \bbI_2$ in Case (I), where $\bbI_2=\begin{bmatrix}  
    \begin{smallmatrix}
        1 & 0 \\
        0 & 1
    \end{smallmatrix}\end{bmatrix}$. Thus, 
    \[ \Nr_{\F_{p^2}/\Fp} \circ \det(m_\Pi(O_p^\times))= \begin{cases}
       (\Fp^\times)^2, & \text{Case (I)}; \\
       \Fp^\times, &  \text{Case (II)}, 
   \end{cases}\]
    and Equation~\eqref{eq:19} follows from Lemma~\ref{lm:15}. 

    (2) Let $X_1$ and $X_2$ be two supersingular abelian surfaces in Case (II). By Theorem~\ref{thm:oort}, for any supersingular abelian surface $X$ with $a(X)=1$, there is an isogeny $\varphi: E_0^2 \otimes k \to X$ of degree $p$.  Let $\varphi_i: E_0^2 \otimes k \to X_i$, $i=1,2$, be two minimal isogenies of degree $p$ with respective parameter $t_i\in \F_{p^4}\setminus \F_{p^2}$. 
    To show $X_1\simeq X_2$, it suffices to show $X_1[p^\infty]\simeq X_2[p^\infty]$ by statement (1). Let $M_0$ and $M_i$ be the respective contravariant \dieu modules of $E_0^2\otimes k$ and $X_i$. One has $VM_0\subset M_i \subset M_0$ for $i=1,2$. To see this, as $\ker \varphi_i\simeq \alpha_p$ and $M(\ker \varphi_i)\simeq M_0/M_i$, we have $V(M_0/M_i)=V(M(\ker \varphi_i))=V(M(\alpha_p))=0$. Since $\varphi_i$ are minimal isogenies, any isomorphism $M_1\isoto M_2$ lifts to a unique automorphism of $M_0$. $\End_{\rm DM}(M_0)$ acts the set of lines of $M_0/VM_0$ through the quotient $\End_{\rm DM}(M_0)\onto \Mat_2(\F_{p^2})$, and the action of $\Aut_{\rm DM}(M_0)$ factors through the action of $\GL_2(\F_{p^2})$ on $\bbP^1(k)$ by linear fractional transformations. 
    Since $\F_{p^2}(t_1)=\F_{p^2}(t_2)$, we have $t_2=(a t_1+b)/(c t_1+d)$, for some $\left (\begin{smallmatrix}
        a & b\\
        c & d 
    \end{smallmatrix} \right ) \in \GL_2(\F_{p^2}) $ and hence $\Aut_{\rm DM}(M_0)$ acts transitively on $\F_{p^4}\setminus \F_{p^2}$. This shows $M_1\simeq M_2$ and hence the proposition. \qed
\end{proof}

We see from Lemma~\ref{lm:15} that the $p$-divisible group of a supersingular abelian variety $X$ characterizes $X$ up to finitely many possibilities. Moreover, such a finite set is completely determined by $\End(X)\otimes \Zp$. Thus, the local endomorphism ring $\End(X)\otimes \Zp$ serves  as a good and convenient invariant for studying supersingular abelian varieties as well as for exploring the geometry of the moduli space $\calS_g$ of $g$-dimensional principally polarized abelian varieties. 
In some sense, Theorems~\ref{thm:oort} and \ref{thm:DOS} are just instances of such a characterization in terms of geometry.  
For $g=3$, the structure of $\End(X)\otimes \Zp$ has been explored in \cite{KYY}. The authors also describe how this invariant varies in the moduli space $\calS_3$, or more precisely in a model irreducible component $\calP'_{3,\eta}$ of $3$-dimensional rigid polarized flag type quotients (PFTQ's). 
Based on \cite{KYY}, one can further compute the size of $\Lambda_X$ for each $(X,\lambda)\in \calS_3$ using the same method of Lemma~\ref{lm:15}. 

In~\cite{IKY} the authors determine precisely when a principally polarized abelian variety is determined by its $p$-divisible group. Namely, they prove:
\begin{thm}[{\cite[Corollary B]{IKY}}]\label{thm:IKY}
   A principally polarized abelian variety $(X,\lambda)$ over $k$ is uniquely determined by its polarized $p$-divisible group if and only if $X$ is supersingular and one of the following three cases holds:
\begin{itemize}
\item [(i)] $g=1$ and $p\in \{2,3,5,7,13\}$;
\item [(ii)] $g=2$ and $p=2,3$; 
\item [(iii)] $g=3$, $p=2$ and $a(X)\ge 2$. 
\end{itemize}      
\end{thm}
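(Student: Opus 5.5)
The plan is to split the statement into a non-supersingular part and a supersingular part. The supersingular part is a polarized version of Lemma~\ref{lm:15} combined with an explicit mass computation.

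\textbf{Non-supersingular case.} Here I would show that $(X,\lambda)$ is never determined by its polarized $p$-divisible group. The tool is Oort's central leaf $\calC(X,\lambda)$ in $\calA_{g,1,n}$: the locus of points whose polarized $p$-divisible group is geometrically isomorphic to $(X[p^\infty],\lambda)$. I would quote that $\calC(X,\lambda)$ is a locally closed subvariety which has dimension $0$ if and only if $X$ is supersingular. Since the forgetful map $\calA_{g,1,n}\to\calA_{g,1}$ is finite, a positive-dimensional leaf contains infinitely many pairwise non-isomorphic $(X',\lambda')$ with $(X'[p^\infty],\lambda')\simeq(X[p^\infty],\lambda)$. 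This settles the ``only if'' direction outside the supersingular locus.

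\textbf{Supersingular case.} I would redo the proof of Lemma~\ref{lm:15} with polarizations. Let $G$ be the group scheme of similitudes of $(O,\ast)$, where $\ast$ is the Rosati involution of $\lambda$, and let $G^1\subseteq G$ be the subgroup with multiplier $1$. By Proposition~\ref{prop:14}, $G^1_{\Q}$ is an inner form of $\Sp_{2g}$ that is compact at $\infty$. Exactly as in \eqref{eq:3.4}, quasi-isogenies compatible with the polarizations and the adelic lattices $L_\ell$, $L_p$ give a bijection
\[
\Lambda_{(X,\lambda)}\simeq G^1(\Q)\backslash G^1(\A_f)/U, \qquad U=\prod_\ell \Aut(L_\ell,\lambda).
\]
Strong approximation is unavailable because $G^1(\R)$ is compact, so the singleton criterion becomes a mass condition:
\[
\textstyle\sum_{[X']\in\Lambda}1/|\Aut(X',\lambda')| = 1/|\Aut(X,\lambda)|.
\]
I would compute the mass as the superspecial mass times a local correction factor at $p$. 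The superspecial mass is given by the Ekedahl/Hashimoto--Ibukiyama formula
\[
\prod_{i=1}^g\frac{|\zeta(1-2i)|}{2}\cdot\prod_{i=1}^g\bigl(p^i+(-1)^i\bigr).
\]
The correction factor is the index $[\,\Aut(M_0,\lambda_0):\Aut(M,\lambda)\,]$ relative to a minimal isogeny as in Proposition~\ref{prop:19}, computed via reduction modulo $\Pi$.

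\textbf{Running the cases.} For $g=1$ the mass is $(p-1)/24$, and class number one happens exactly for $p\in\{2,3,5,7,13\}$. For $g\ge4$, and for $g=2,3$ with $p$ larger than the listed primes, the mass exceeds $1/2$. Since $-1$ is always an automorphism, $1/|\Aut(X,\lambda)|\le 1/2$, so these cases fail. For the remaining small cases ($g=2$ with $p\le 5$, and $g=3$ with $p\le 3$) I would compute $|\Aut(X,\lambda)|$ and the local factor case by case, in the style of Proposition~\ref{prop:19}. The aim is to confirm equality exactly in (ii) and (iii), and to show that $g=3$, $p=2$, $a(X)=1$ fails because its local factor is too large.

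\textbf{Main obstacle.} I expect the hardest step to be these local index computations for non-superspecial $X$ when $g=3$, where the structure of $\End(X)\otimes\Zp$ depends on the point of the PFTQ component (as in \cite{KYY}), together with pinning down $|\Aut(X,\lambda)|$ in the borderline cases. I would first bound the mass from below uniformly along each stratum, and only then treat the few surviving strata explicitly.
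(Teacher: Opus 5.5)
The paper does not prove this statement; it quotes it from \cite[Corollary B]{IKY}, so your argument has to stand on its own. Two parts of it are fine: the non-supersingular step via positive-dimensional central leaves, and the reduction of the supersingular case to a class number one problem, detected by the identity $\sum_{[X',\lambda']}1/|\Aut(X',\lambda')|=1/|\Aut(X,\lambda)|$.

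The gap is the decisive step of ``Running the cases''. Your claim that the mass exceeds $1/2$ for $g\ge 4$, and for $g=2,3$ beyond the listed primes, is false. Already for superspecial $(X,\lambda)$, where your local factor is $1$, the mass is the principal-genus number you quote:
\begin{itemize}
\item for $g=2$ it is $(p-1)(p^2+1)/5760$, e.g.\ $17/48$ at $p=13$;
\item for $g=3$ it is $(p-1)(p^2+1)(p^3-1)/2903040$, about $0.035$ at $p=7$;
\item for $(g,p)=(4,2)$ it is $17/39813120$.
\end{itemize}
At $p=2$ the mass stays below $1/2$ for every $g\le 9$, and at $p=3$ for every $g\le 7$. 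So $(g,p)=(2,7),(2,11),(2,13),(3,5),(3,7)$, and a whole range of $g\ge 4$ with small $p$, are neither excluded by your inequality nor on your list of hand computations. For these cases, comparing the mass with $1/2$ can never decide anything.

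What is missing is a way to prove class number $>1$ when the mass is small. For the superspecial principal genus, two elementary tools suffice.
\begin{itemize}
\item Non-integrality of $1/\mathrm{Mass}$: e.g.\ $720/13$ for $(2,5)$, $72576/13$ for $(3,3)$, $39813120/17$ for $(4,2)$.
\item Cancellation: principally polarized abelian varieties (equivalently, unimodular quaternion Hermitian lattices) decompose uniquely into indecomposables. Hence two non-isomorphic members in dimension $g'$ stay non-isomorphic after multiplying by $(E^{g-g'},\lambda_{\rm can})$, and class number $>1$ propagates to all $g\ge g'$.
\end{itemize}
Together with the positive checks $1/\mathrm{Mass}=|\Aut(E^g,\lambda_{\rm can})|=1152,\ 288,\ 82944$ for $(g,p)=(2,2),(2,3),(3,2)$, this settles the superspecial points.

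For non-superspecial points you need a reduction through the minimal isogeny $\tilde X\to X$. It induces a surjection from the central leaf of $(X,\lambda)$ onto the genus of $(\tilde X,\tilde\lambda)$. That genus is in general \emph{non-principal}, so the base mass in your formula is not the Ekedahl/Hashimoto--Ibukiyama one. This reduces the problem to class number one for those genera. Only where that genus itself has class number one do you need to count orbits of $\Aut(\tilde X,\tilde\lambda)$ on the local parameter set, as in Proposition~\ref{prop:19}. An example is $g=2$ with $\ker\tilde\lambda=\tilde X[F]$, which has class number one for all $p\le 11$, reflecting the irreducibility of $\calS_2$ there. That orbit count is exactly what separates $p=2,3$ from $p=5,7,11$ when $a(X)=1$, and analogous residual analyses are needed for $g=3$ and $g=4$ at small $p$. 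Without these ingredients the case analysis does not close.
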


By Lemma~\ref{lm:15}, one obtains the following characterization of an abelian variety by its $p$-divisible group in terms of arithmetic.

\begin{cor} An abelian variety $X$ of dimension $>1$ over $k$ is determined by its $p$-divisible group if and only if $X$ is supersingular and $\Nr(O_p^\times)=\Zp^\times$.     
\end{cor}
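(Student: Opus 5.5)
The corollary has two directions, and I will treat them separately. Throughout, ``$X$ is determined by its $p$-divisible group'' means that any abelian variety $X'$ over $k$ with $X'[p^\infty]\simeq X[p^\infty]$ is isomorphic to $X$.

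\emph{The supersingular case.} Suppose $X$ is supersingular of dimension $g>1$. An abelian variety $X'$ with $X'[p^\infty]\simeq X[p^\infty]$ has the same slopes, so it is again supersingular and is a member of $\Lambda_X$. Hence ``$X$ is determined by its $p$-divisible group'' is exactly the statement $|\Lambda_X|=1$. Lemma~\ref{lm:15} gives $\Lambda_X\simeq \Zp^\times/\Nr(O_p^\times)$, and this set is a singleton if and only if $\Nr(O_p^\times)=\Zp^\times$. This gives both implications in the supersingular case.

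\emph{The non-supersingular case.} It remains to show that a non-supersingular $X$ of dimension $g>1$ is never determined by its $p$-divisible group. I would try two routes.

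\begin{itemize}
\item[(i)] Invoke Theorem~\ref{thm:IKY}. Choose a polarization and pass to a principally polarized abelian variety in the same prime-to-$p$ isogeny class, which keeps the $p$-divisible group fixed up to isomorphism. However, that theorem concerns \emph{polarized} $p$-divisible groups, so it does not directly yield the unpolarized statement.
\item[(ii)] Argue directly, so this is the route I would actually use. Let $\ell\neq p$ be a prime. Any $X'$ prime-to-$p$ isogenous to $X$ satisfies $X'[p^\infty]\simeq X[p^\infty]$. Counting, in the spirit of the adelic description~\eqref{eq:3.4}, the isomorphism classes of quotients $X/H$ with $H\subset X[\ell^n]$ should produce infinitely many classes when $X$ is not supersingular. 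The reason is that $\End^0(X)$ then has dimension less than $4g^2$, so the orbit space of $\ell$-power lattices in $V_\ell(X)$ modulo the global units is infinite.
\end{itemize}

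\emph{Main obstacle.} The delicate step is route (ii) when $X$ is ordinary or of mixed slope. The cleanest tool I see is a degree or height count. For fixed $X$, there are only finitely many isomorphism classes of $X'$ admitting an isogeny of bounded degree to $X$. I would then combine this with an argument that lattices $g_\ell T_\ell(X)$ can be chosen whose minimal isogeny degree to $X$ grows without bound. Such lattices exist because $G(\Q_\ell)$ is non-compact modulo the closure of the global units when the centralizer is small. Failing that, I would fall back on Theorem~\ref{thm:IKY}, transported to the unpolarized setting by noting that isomorphic polarizations can be matched after twisting.
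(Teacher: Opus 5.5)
Your supersingular argument is exactly the paper's. $X'[p^\infty]\simeq X[p^\infty]$ forces $X'$ to be supersingular, so uniqueness means $|\Lambda_X|=1$, and Lemma~\ref{lm:15} identifies $\Lambda_X$ with $\Zp^\times/\Nr(O_p^\times)$. The paper's deduction (``By Lemma~\ref{lm:15}'') stops there. Since Lemma~\ref{lm:15} says nothing about non-supersingular $X$, the paper leaves the implication ``determined by $X[p^\infty]$ $\Rightarrow$ supersingular'' implicit.

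You try to supply that implication, but the decisive step is only asserted. Everything rests on the set of $\End^0(X)^\times$-orbits of $\Z_\ell$-lattices in $V_\ell(X)$ being infinite, and neither justification you give proves it. Unboundedness of the minimal isogeny degree is just a restatement of that infinitude. The claim that ``$G(\Q_\ell)$ is non-compact modulo the closure of the global units'' is stated, not shown. The finiteness premise of your degree count is also false in characteristic $p$ unless the degrees are prime to $p$: the $\alpha_p$-quotients of $E_0^2\otimes k$ in Case (I) of Proposition~\ref{prop:19} give infinitely many isomorphism classes, each with a degree-$p$ isogeny to $E_0^2\otimes k$. You do not actually need that premise, though. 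The fallback to Theorem~\ref{thm:IKY} also fails. Another principally polarized $(X',\lambda')$ with the same polarized $p$-divisible group may satisfy $X'\simeq X$, so non-uniqueness of polarized objects says nothing about unpolarized ones.

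The gap closes with an elementary invariant.

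First, if $X$ is not supersingular, then $\dim_\Q\End^0(X)<4g^2$. By the argument for \eqref{eq:3.3}, $\End^0(X)\otimes\Qp$ embeds in the endomorphism algebra of the isocrystal $N\simeq\bigoplus_\lambda(N_k^\lambda)^{m_\lambda}$ of $X$, where $r_\lambda$ is the denominator of $\lambda$. By Theorems~\ref{thm:manin}, \ref{thm:endo} and Remark~\ref{rem:26}(iii), that algebra has dimension $\sum_\lambda(m_\lambda r_\lambda)^2$. This is strictly less than $(\sum_\lambda m_\lambda r_\lambda)^2=4g^2$ unless there is a single slope, and a single slope must be $1/2$ because the $2g$ slopes sum to $g$.

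Hence $A:=\End^0(X)\otimes\Q_\ell$ is a proper semisimple subalgebra of $\End_{\Q_\ell}(V_\ell(X))$. The double centralizer theorem then gives a non-scalar $c$ commuting with $A$. For a lattice $L$ put $\nu_L(c):=\min\{m\ge 0:\ell^m cL\subseteq L\}$. Clearly $\nu_{\gamma L}(c)=\nu_L(c)$ for all $\gamma\in A^\times$.

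This makes $\nu$ an isomorphism invariant. Let $H_1,H_2\subset X[\ell^\infty]$ be finite, $\pi_i:X\to X/H_i$, and $\alpha:X/H_1\isoto X/H_2$. Then $\gamma:=\pi_2^{-1}\alpha\pi_1\in A^\times$ carries $L_1:=\pi_1^{-1}T_\ell(X/H_1)$ onto $L_2$. So $\nu_L(c)$ depends only on the isomorphism class of $X/H$.

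Finally, choose $v$ with $cv\notin\Q_\ell v$ and extend to a basis $v,cv,v_3,\dots,v_{2g}$. Let $L_n$ be $\Z_\ell v+\ell^n\Z_\ell cv+\sum_{i\ge 3}\Z_\ell v_i$, rescaled by a power of $\ell$ so that $L_n\supseteq T_\ell(X)$. Then $\nu_{L_n}(c)\ge n$. The quotients $X/(L_n/T_\ell(X))$ all have $p$-divisible group isomorphic to $X[p^\infty]$, yet they represent infinitely many isomorphism classes.

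With this step inserted your route (ii) is rigorous. Your proof then covers both directions, which the paper's one-line deduction does not spell out.
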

    
It is desirable to reveal this arithmetic condition in term of geometry. For example, what is the locus in a parameter space $\calX\to S$ for abelian varieties defined by the condition $\Nr(O_p^\times)=\Zp^\times$? For $g=2$, we have the following result.

\begin{prop}
    Let $\calZ$ be the moduli space of isogenies $\varphi: E_0^2\otimes k\to X$ of degree $p$, and fix an identification $\calZ=\bbP^1$.
    Let $\calY\subseteq \calZ$ denote the locus consisting of $\varphi$ satisfying $\Nr(O_p^\times)=\Zp^\times$. Then 
    \[ \calY=\begin{cases}
        \bbP^1 & \text{if $p=2$;} \\
        \bbP^1(\F_{p^4}) & \text{if $p>2$.} 
    \end{cases}\]    
\end{prop}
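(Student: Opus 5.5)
We compute $\Zp^\times/\Nr(O_p^\times)$ for each point $t\in\calZ(k)=\bbP^1(k)$, split according to where $t$ lies: $t\in\bbP^1(\F_{p^2})$ (superspecial quotient), Case (II) $t\in\F_{p^4}\setminus\F_{p^2}$, and Case (I) $t\notin\F_{p^4}$. By Lemma~\ref{lm:15}, the condition $\Nr(O_p^\times)=\Zp^\times$ is equivalent to $\Lambda_X$ being a singleton, where $X=E_0^2\otimes k/\ker\varphi$.

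\emph{Superspecial points.} For $t\in\bbP^1(\F_{p^2})$ the quotient $X$ is superspecial, hence $X\simeq E\times E'$ by Theorem~\ref{thm:oort}. Proposition~\ref{prop:17} then gives $\Lambda_X=\{1\}$, i.e.\ $\Nr(O_p^\times)=\Zp^\times$. So $\bbP^1(\F_{p^2})\subseteq\calY$ for every $p$.

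\emph{Non-superspecial points.} For $t\notin\bbP^1(\F_{p^2})$ we have $a(X)=1$, and Proposition~\ref{prop:19}(1) applies; its proof already identifies
\[ \Zp^\times/\Nr(O_p^\times)\simeq\Fp^\times/\Nr_{\F_{p^2}/\Fp}\circ\det(m_\Pi(O_p^\times)). \]
In Case (II) this quotient is trivial, so all of $\bbP^1(\F_{p^4})\setminus\bbP^1(\F_{p^2})$ lies in $\calY$. In Case (I) the quotient is $\Fp^\times/(\Fp^\times)^2$, which is trivial exactly when $p=2$ and has order $2$ when $p>2$. Thus Case (I) points belong to $\calY$ iff $p=2$.

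\emph{Assembling.} For $p=2$ every point lies in $\calY$, so $\calY=\bbP^1$. For $p>2$ we get $\calY=\bbP^1(\F_{p^2})\cup(\bbP^1(\F_{p^4})\setminus\bbP^1(\F_{p^2}))=\bbP^1(\F_{p^4})$.

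The main point needing care is that $O_p$ is well defined on $\calZ$: one should read $O=\End(X)$ and check that the distinction between Cases (I) and (II) does not depend on the chosen presentation, as the paper asserts. The only genuinely nontrivial input is the description of $m_\Pi(O_p)$ from \cite[Proposition 3.2]{yu-yu:mass_surface}. That result gives $\F_{p^2}\cdot\bbI_2$ in Case (I), whose image under $\Nr_{\F_{p^2}/\Fp}\circ\det$ is $\Nr(a^2)=\Nr(a)^2$, namely the squares. In Case (II) it gives $\F_{p^4}^\times$, whose image is $\Nr_{\F_{p^4}/\Fp}$, which is surjective. Both facts are used exactly as in Proposition~\ref{prop:19}.
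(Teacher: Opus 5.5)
Your proposal is correct and matches the paper's own one-line justification. You split $\bbP^1(k)$ into three pieces: the superspecial points $\bbP^1(\F_{p^2})$, handled via Theorem~\ref{thm:oort} and Proposition~\ref{prop:17}; Case (II); and Case (I). You then read off $\Zp^\times/\Nr(O_p^\times)$ from Proposition~\ref{prop:19}, so the only $p$-dependence is the factor $\Fp^\times/(\Fp^\times)^2$ in Case (I), which is trivial exactly when $p=2$.
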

By Proposition~\ref{prop:19} the locus $\calY$ consists of $\varphi$ where the abelian surface $X$ is  superspecial ($t\in \bbP(\F_{p^2})$), or in Case (II), or in Case (I) and $p=2$. A priori, $\calY$ is a subvariety defined over $k$ but it is actually defined over $\Fp$ as it is stable under the $\Gal(\Fpbar/\Fp)$-action.

Oort's conjecture \cite[Question 4]{edixhoven-moonen-oort} states that when $g\ge 2$, every geometric generic member in $\calS_g$ has automorphism group $\{\pm1 \}$.
For $g=2$ and $p>2$, Oort's conjecture has been proved by Ibukiyama~\cite{ibukiyama:ppas}, and by Karemaker and Pries \cite{karemaker-pries} independently, with a counterexample in  $p=2$. Oort's conjecture for $g=3$ and $p>2$ has been proved by Karemaker, Yobuko and the author \cite{KYY}, with again a counterexample in $p=2$. In a recent preprint~\cite{karemaker-yu:sseooc} Karemaker and the author prove the following result.

\begin{thm}
    Oort’s conjecture holds true for the case where either $g=4$, or $g$ is even and $p \ge 5$.
\end{thm}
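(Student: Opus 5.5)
The plan is to reduce the statement to a fixed-point computation on a parameter space of flag type quotients. Concretely, I would show that for every $\theta\in\Aut(X,\lambda)$ with $\theta\neq\pm1$, the locus where $\theta$ lifts to an automorphism is a proper closed subset of each irreducible component of $\calS_g$.

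\emph{Reduction step.} By Li--Oort, every irreducible component of $\calS_g$ is dominated by a component $\calP'_{g,\eta}$ of the moduli of rigid PFTQ's. A point of $\calP'_{g,\eta}$ is a chain of isogenies starting from a superspecial principally quasi-polarized $(Y,\mu)$, where by Theorem~\ref{thm:oort} we may take $Y=E^g$. The generic point lies over some $(X,\lambda)$ with $a(X)=1$. Rigidity should give the following: for $(X,\lambda)$ with $a(X)=1$, the canonical minimal isogeny $\rho:Y\to X$ is unique. Hence every automorphism of $(X,\lambda)$ lifts uniquely to an automorphism of $(Y,\mu)$ that preserves the whole flag. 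So $\Aut(X,\lambda)$ embeds as the stabilizer, inside the finite group $\Aut(Y,\mu)$, of the point of $\calP'_{g,\eta}$ determined by the PFTQ.

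\emph{Passage to a linear group.} Since $\Aut(Y,\mu)$ is finite, the conjecture for the geometric generic member reduces to one claim: for each $\theta\in\Aut(Y,\mu)\setminus\{\pm1\}$, the fixed locus $\calP'^{\,\theta}_{g,\eta}$ has dimension strictly less than $\dim\calP'_{g,\eta}=\lfloor g^2/4\rfloor$. The action of $\theta$ on the PFTQ data only depends on its image under reduction modulo $\Pi$, as in the proof of Proposition~\ref{prop:19}. Indeed, $\End(Y)\otimes\Zp\simeq\Mat_g(\Z_{p^2}[\Pi])\onto\Mat_g(\F_{p^2})$, and the polarization $\mu$ becomes a hermitian (or quaternion-hermitian) form. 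This maps $\theta$ to an element $\bar\theta$ of a finite unitary group $U_g(\F_{p^2})$, acting on a flag variety. The kernel of this reduction is pro-$p$, while $\theta$ has finite order. For $p\ge5$ I expect the following to hold: $\bar\theta$ is nonscalar unless $\theta=\pm1$, and $\bar\theta$ is semisimple. I would argue the latter from the eigenvalues $\zeta$ of $\theta$: these are roots of unity of degree at most $2g$ in a quaternion-type algebra, and for $p\ge5$ no $p$-power-order factor can occur compatibly with the even-$g$ structure.

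\emph{Main step and expected obstacle.} For a nonscalar semisimple $\bar\theta$, the fixed points on the relevant Lagrangian or isotropic flag variety form a disjoint union of products of smaller flag varieties attached to the eigenspace decomposition. I would check that the dimension drops strictly below $\lfloor g^2/4\rfloor$ by counting: an eigenspace decomposition $g=g_1+g_2$ gives $\lfloor g_1^2/4\rfloor+\lfloor g_2^2/4\rfloor<\lfloor g^2/4\rfloor$. The parity assumption on $g$ enters because $\eta$ then corresponds to the non-principal genus, and the components are indexed accordingly. The hard part will be the case $g=4$ with $p=2,3$. There $\bar\theta$ can be unipotent, and the crude count fails, because a unipotent element may fix a large Schubert-type subvariety. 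For that case I would first try an explicit analysis: use that the PFTQ for $g=4$ has only two nontrivial steps, write down the generic Dieudonn\'e module with $a=1$ using Theorem~\ref{thm:defo} as in Lemma~\ref{lm:12}, and compute directly that a unipotent $\bar\theta\ne1$ imposes at least one nontrivial equation on the generic parameter. Finally, by uniqueness of the lift, $\theta\equiv\pm1$ modulo the pro-$p$ kernel forces $\theta=\pm1$, which finishes the proof.
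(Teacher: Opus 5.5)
\textbf{Assessment: the proposal has a genuine gap.} The paper does not prove this theorem. It only quotes it from \cite{karemaker-yu:sseooc}, so there is no internal argument to compare against.

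Your outer reduction is sound. The generic member has $a(X)=1$ and the rigid PFTQ over it is unique. Hence $\Aut(X,\lambda)$ is the stabilizer, in the finite group $\Aut(Y,\mu)$, of a generic point of the irreducible space $\calP'_{g,\eta}$. So you only need that no $\theta\neq\pm1$ acts trivially on $\calP'_{g,\eta}$. The gap is in how you analyse that action.

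\textbf{The reduction modulo $\Pi$ loses exactly the information that matters.} Your claim that $\theta$ acts on PFTQ data only through $\theta \bmod \Pi$ holds for $g=2$ (Proposition~\ref{prop:19}: one step inside $Y[F]$). It is false for $g\ge 3$. A rigid PFTQ $Y_{g-1}\to\cdots\to Y_0$ has $g-1$ steps (three for $g=4$, not two), and $\ker(Y_{g-1}\to Y_0)\subseteq Y_{g-1}[F^{g-1}]$. So $\theta$ acts through $\theta\bmod \Pi^{g-1}$, and $\calP'_{g,\eta}$ is an iterated fibration, not a flag variety of a finite classical group. Already for $g=3$ it is a $\bbP^1$-bundle over a Fermat curve whose fibres are moved by $\theta \bmod \Pi^2$.

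This breaks your final step. The claim ``$\theta\equiv\pm1$ modulo the pro-$p$ kernel forces $\theta=\pm1$'' needs $1+\Pi\Mat_g(\Z_{p^2}[\Pi])$ to be torsion-free, and being pro-$p$ does not give that.
\begin{itemize}
\item For $p\ge5$ it is torsion-free. If $\theta=1+\Pi y$, then $(\theta-1)^2=-p\,(\Pi^{-1}y\Pi)\,y$, so $(\theta-1)^2/p$ is integral. But $v_p((\zeta-1)^2)\le 2/(p-1)<1$ for any nontrivial $p$-power root of unity $\zeta$.
\item For $p=3$ it fails: $\zeta_3=(-1+\Pi)/2\in 1+\Pi\Z_9[\Pi]$.
\item For $p=2$ it fails too. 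Let $(Y,\mu)=(Y_1,\mu_1)\times(Y_2,\mu_2)$ be a product of two members of the $g=2$ non-principal genus. Then $\diag(-1_{Y_1},1_{Y_2})\in\Aut(Y,\mu)$ is $\equiv 1 \bmod \Pi^2$ but is not $\pm1$.
\end{itemize}
In your model this last element acts trivially, so your model predicts that the conjecture \emph{fails} for $g=4$, $p=2$. It can only be detected at the third step, since $-1\not\equiv 1 \bmod \Pi^3$. Thus the cases $g=4$, $p\in\{2,3\}$ are decided precisely by data your reduction discards. Your plan for them does not address this: it treats only unipotent $\bar\theta\neq 1$, and it relies on Theorem~\ref{thm:defo}, which describes deformations over Artinian rings, not points of $\calP'_{4,\eta}$.

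\textbf{For even $g$ the relevant group is symplectic, and this is where parity enters.} No step of your argument uses parity. Locally the non-principal genus has Gram matrix $\Pi J$ with $J$ alternating, which exists only for even $g$. Let $x\mapsto x^*$ be the canonical involution, extended to matrices by conjugate transpose. The map $x\mapsto \Pi^{-1}x^*\Pi$ induces the identity on $O_{D_p}/\Pi=\F_{p^2}$. Hence $\theta^*\,\Pi J\,\theta=\Pi J$ reduces to $\bar\theta^{T}J\bar\theta=J$. So the image of $\Aut(Y,\mu)$ lies in $\Sp_g(\F_{p^2})$, whose only scalars are $\pm1$.

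In the unitary setting you wrote down, your expectation that $\bar\theta$ is nonscalar unless $\theta=\pm1$ is false. For $p\equiv 3\bmod 4$ and $j(E)=1728$, the automorphism $i\cdot\bbI_g$ preserves the product principal polarization on $E^g$ and reduces to the scalar $\bar i\notin\F_p$.

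\textbf{The remaining steps for $p\ge5$ are unsupported.}
\begin{itemize}
\item The semisimplicity claim is unfounded. $\Mat_g(\Z_{p^2}[\Pi])^\times$ contains elements of order $p$ whenever $p-1\mid 2g$ (for example $p=5$ and every even $g$). These reduce to nontrivial unipotents, and nothing in your argument excludes them from $\Aut(Y,\mu)$.
\item The count $\lfloor g_1^2/4\rfloor+\lfloor g_2^2/4\rfloor$ assumes, without justification, that fixed loci are products of smaller PFTQ spaces.
\end{itemize}

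\textbf{What would suffice.} For $p\ge5$ none of the above is needed. It is enough to show that an automorphism acting trivially on the variety of first steps $\ker(Y_{g-1}\to Y_{g-2})\subset Y_{g-1}[F]\simeq\alpha_p^g$ is scalar modulo $\Pi$. This requires that variety to be irreducible and not contained in a hyperplane. Such an automorphism is then $\pm1$ in $\Sp_g(\F_{p^2})$, and torsion-freeness applied to $\pm\theta$ gives $\theta=\pm1$. For $g=4$ and $p=2,3$ you must instead compute how $\Aut(Y,\mu)\cap\pm\bigl(1+\Pi\Mat_4(\Z_{p^2}[\Pi])\bigr)$ moves the second and third steps.
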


In a recent preprint \cite{dusan}, Dragutinovi{\'c} obtains a different proof of Oort's conjecture when $g=4$ and $p>2$, as well as a new proof when $g=3$ and $p>2$, using the moduli space of curves in both cases.

\ \\
\npr{{\bf Acknowledgments.}} The author wishes to express his sincere 
gratitude to Professor Frans Oort for his pioneering works from 
which the author learned a lot. He also wishes to thank Shushi Harashita and Momonari Kudo for their kind invitation to the RIMS conference "Theory and Applications of Supersingular Curves and Supersingular Abelian Varieties", and thanks the referee for his/her valuable comments that improve the present article significantly. The author was partially supported by the NSTC grants 112-2115-M-001-010 and 113-2115-M-001-001, and Academia Sinica IVA grant.

\end{document}